\renewcommand{\emph}[1]{\textbf{#1}}
\declaretheoremstyle[
spaceabove=\topsep, spacebelow=\topsep,
headfont=\normalfont\bfseries,
postheadspace=.5em,
qed=$\square$
]{whiteqed}
\declaretheorem[style=definition,parent=section,title=Theorem]{Thm}
\declaretheorem[style=definition,sibling=Thm,title=Proposition]{Prop}
\declaretheorem[style=definition,sibling=Thm,title=Lemma]{Lem}
\declaretheorem[style=definition,sibling=Thm,title=Corollary]{Cor}
\declaretheorem[style=definition,numbered=no]{Claim}
\declaretheorem[style=whiteqed,sibling=Thm,title=Definition]{Def}
\declaretheorem[style=whiteqed,sibling=Thm,title=Remark]{Rmk}
\declaretheorem[style=whiteqed,sibling=Thm,title=Example]{Expl}
\declaretheorem[style=whiteqed,sibling=Thm,title=Theorem]{Thm*}
\declaretheorem[style=whiteqed,sibling=Thm,title=Proposition]{Prop*}
\declaretheorem[style=whiteqed,sibling=Thm,title=Corollary]{Cor*}
\declaretheorem[style=whiteqed,sibling=Thm,title=Lemma]{Lem*}
\newcommand{\bvec}[1]{\bm{#1}}
\newcommand{\smid}{\mathrel{;}}
\mathchardef\mhyphen="2D
\newcommand{\dbracket}[1]{\left \llbracket #1 \right \rrbracket}
\newcommand{\Set}[1]{\left \{\, #1 \, \right \}}
\newcommand{\id}{\mathrm{id}}
\newcommand{\op}{\mathrm{op}}
\newcommand{\isoarrow}{\stackrel{\sim}{\to}}
\newcommand{\llim}{\varprojlim}
\newcommand{\rlim}{\varinjlim}
\DeclareMathOperator{\Hom}{Hom}
\DeclareMathOperator{\Spec}{Spec}
\DeclareMathOperator{\Idl}{Idl}
\newcommand{\modsp}[2]{#1\mhyphen\mathbf{Mod}#2}
\newcommand{\fp}{\omega}
\DeclareMathOperator{\Mor}{Mor}
\DeclareMathOperator{\RanExt}{Ran}
\DeclareMathOperator{\LanExt}{Lan}
\DeclareMathOperator{\Diag}{Diag}
\newcommand*{\importbibfrom}[1]{%
	\def\blx@bblfile{%
		\blx@secinit
		\begingroup
		\blx@bblstart
		\InputIfFileExists{#1.bbl}
		{\blx@info@noline{... file '#1.bbl' found}%
			\global\toggletrue{blx@bbldone}}
		{\blx@info@noline{... file '#1.bbl' not found}%
			\typeout{No file #1.bbl.}}%
		\blx@bblend
		\endgroup
		\csnumgdef{blx@labelnumber@\the\c@refsection}{0}}}
\global\let\blx@rerun@biber\relax
\title{\Large Limits, Colimits, and Spectra of Modelled Spaces}
\author{Hisashi Aratake%
	\thanks{\textit{E-mail address}: \texttt{aratake@kurims.kyoto-u.ac.jp}
		}
	}
\affil{Research Institute for Mathematical Sciences,\\
	Kyoto University, Kyoto, Japan}
\date{}
\begin{document}

\maketitle

\begin{abstract}
	It is well-known that the construction of Zariski spectra of (commutative) rings yields a dual adjunction
	between the category of rings and the category of locally ringed spaces.
	There are many constructions of spectra of algebras in various contexts giving such adjunctions.
	Michel Coste unified them in the language of categorical logic by showing that, for an appropriate triple $ (T_0,T,\Lambda) $ (which we call a spatial Coste context),
	each $ T_0 $-model can be associated with a $ T $-modelled space and 
	that this yields a dual adjunction between the category of $ T_0 $-models and the category of $ T $-modelled spaces and ``admissible'' morphisms.
	However, most of his proofs remain unpublished.
	In this paper, we introduce an alternative construction of spectra of $ T_0 $-models and give another proof of Coste adjunction.
	Moreover, we also extend spectra of $ T_0 $-models to relative spectra of $ T_0 $-modelled spaces
	and prove the existence of limits and colimits in the involved categories of modelled spaces.
	We can deduce, for instance, that the category of ringed spaces whose stalks are fields is complete and cocomplete.
\end{abstract}

\setcounter{section}{-1}

\section{Introduction}
Spectra of algebras are ubiquitous in mathematics. One of the most famous spectra is the Zariski spectrum.
If we write $ \Spec_Z(A) $ for the Zariski spectrum of a (commutative) ring $ A $,
we have a functor $ \Spec_Z \colon \mathbf{Ring}^{\op} \to \mathbf{LRS} $ from the category of rings to that of locally ringed spaces, 
and it is a right adjoint of the global section functor $ \Gamma \colon \mathbf{LRS} \to \mathbf{Ring}^{\op} $.
\[ \tikz{
	\node (L) at (0,0) {$ \mathbf{LRS} $};
	\node (R) at (4,0) {$ \mathbf{Ring}^{\op} $};
	\draw[bend left=15pt,->] (L) to node[above] {$ \Gamma $} node[midway,name=U] {} (R);
	\draw[bend left=15pt,->] (R) to node[below] {$ \Spec_Z $} node[midway,name=D] {} (L);
	\path (U) to node[sloped] {$ \dashv $} (D);
} \]
Many other spectra constructions yield adjunctions between categories of algebras 
and of ``locally algebra-ed spaces'' (see the examples in \S\ref{subsec:Coste-context}).
In some sense, a spectrum of an algebra is the best possible approximation to a free ``local'' algebra 
when the forgetful functor from the category of ``local'' algebras does not have a left adjoint.

Motivated by the \'{e}tale topos of a locally ringed topos \cite{HakimThesis} (cf.\ \cite{Wra1976}),
topos-theorists developed the theory of \textit{topos-theoretic spectra}.
Here, all algebras and spaces are replaced by algebra-ed toposes, 
and the problem turns into obtaining a right biadjoint of the forgetful functor from the bicategory of locally algebra-ed toposes.
Tierney \cite{Tier1976a} constructed Zariski spectra of ringed elementary toposes.
\[ \tikz{
	\node (L) at (0,0) {$ \mathbf{LocRingedToposes} $};
	\node (R) at (6,0) {$ \mathbf{RingedToposes} $};
	\draw[bend left=10pt,->] (L) to node[above] {forgetful} node[midway,name=U] {} (R);
	\draw[bend left=10pt,->] (R) to node[below] {$ \Spec $} node[midway,name=D] {} (L);
	\path (U) to node[sloped] {$ \dashv $} (D);
}  \]
Cole established a general theory\footnote{
	His results had long been unpublished, while his manuscript was circulated among specialists.
	Johnstone's book \cite[pp.~205--209]{JohTT} had been the only published source for Cole's theory.
	Cole's paper was finally published as \cite{Cole2016}.}.
He considered the situation where a class of homomorphisms between algebras in toposes (called \textit{admissible class} of morphisms by Johnstone) is given
so that every homomorphism admits an extremal-admissible factorization.
Then, via the notion of classifying topos, the construction of spectra can be seen as a $ 2 $-categorical construction in the bicategory of toposes and geometric morphisms.
Employing categorical logic, Coste \cite{Coste1979} introduced a framework,
which we will call a \textit{Coste context} (\emph{Definition \ref{def:Coste-context}}),
including all the known examples and whose conditions are much easier to verify.
A Coste context is an appropriate triple $ (T_0,T,\Lambda) $ of two theories $ T_0 \subseteq T $ and a set of ``etale maps'' between $ T_0 $-models.
Besides exploiting categorical logic, another essential difference of Coste's theory from Cole's is using etale maps instead of admissible maps.
Etale maps determine the class of admissible maps, and Coste proved a factorization theorem to apply Cole's theory.
He then obtained a biadjunction between the bicategory of $ T_0 $-modelled toposes and that of $ T $-modelled toposes and admissible morphisms.
He also outlined concrete representations of spectra of set-based models and models in Grothendieck toposes given by explicit sites.
These works led to the discovery of real spectra of rings (\cite{CosteRoy1979b}, \cite{CosteRoy1980a}), which now play an important role in real algebraic geometry.
Dubuc \cite{Dub2000} generalized the Cole--Coste theory of spectra in the $ 2 $-categorical language similarly to Cole's theory,
but he took as the primitive notion not admissible maps but etale maps in the same spirit with Coste.
Recently, Osmond (\cite{Osm2021a}) revisited the Cole--Coste theory with great details and compared it with Diers spectra (\cite{Osm2020a}, \cite{Osm2020b}).
His thesis \cite{OsmondThesis} contains some additional results on Coste--Diers adjunction (Theorem 4.2.5.1), Dubuc spectra (\S5.3), 
and bilimits/bicolimits of locally modelled toposes (Chapter 6).
Another recent paper \cite{JPV2022} seems to be closely related to Coste's theory
and points out that $ \mathbb{F}_1 $-schemes and $ C^{\infty} $-schemes could be obtained from their theory of spectra.

In this paper, rather than working in the topos-theoretic context, we will restrict ourselves to \textit{modelled spaces}, 
i.e.\ topological spaces equipped with sheaves of models.
At the cost of generality, we hope our presentation will be accessible for non-specialists of topos theory.
We will assume the reader is familiar with first-order logic, elementary category theory, and sheaves on topological spaces.
We will occasionally refer to topos theory but will not make essential use of it except in \S\S\ref{subsec:comparison-with-Coste-spectra}, \ref{subsec:standard-context}.

We demonstrate that a Coste context (satisfying an additional ``spatiality'' assumption)
is an appropriate setup to deal with limits and colimits in categories of modelled spaces.
Let $ (T_0,T,\Lambda) $ be a spatial Coste context.
We write $ \modsp{T_0}{\mathbf{Sp}} $ for the category of $ T_0 $-modelled spaces
and $ \modsp{\mathbb{A}}{\mathbf{Sp}} $ for the category of $ T $-modelled spaces and admissible morphisms (\S\ref{subsec:introducing-modelled-spaces}).
Our main contributions are as follows:
\begin{itemize}
	\item We provide an alternative construction of spectra of $ T_0 $-models and show that it is equivalent to Coste's original construction.
	\item We extend the construction to ``relative spectra'' of $ T_0 $-modelled spaces,
	and show that, for any $ T $-modelled space $ (X,P) $, there exists an adjunction
	\[ \tikz{
		\node (L) at (0,0) {$ \modsp{\mathbb{A}}{\mathbf{Sp}}/(X,P) $};
		\node (R) at (6,0) {$ \modsp{T_0}{\mathbf{Sp}}/(X,P) $};
		\draw[bend left=10pt,->] (L) to node[above] {forgetful} node[midway,name=U] {} (R);
		\draw[bend left=10pt,->] (R) to node[below] {$ \Spec $} node[midway,name=D] {} (L);
		\path (D) to node[sloped] {$ \vdash $} (U);
	}.  \]
	This yields another proof of Coste adjunction.
	\item We show completeness and cocompleteness of $ \modsp{\mathbb{A}}{\mathbf{Sp}} $.
\end{itemize}

Coste's paper \cite{Coste1979} contains only a brief outline of his dissertation, and most proofs are omitted.
Unfortunately, the details remain unpublished.
The first two sections of this paper contain detailed proofs of some results in \cite{Coste1979}.
Our exposition could be regarded as complementary to \cite{Osm2021a}, and these papers cover most of the results in \cite{Coste1979}.
In \S\ref{subsec:preliminaries}, we will review first-order categorical logic and locally finitely presentable categories.
The rest of \S\ref{sec:Coste-context} is devoted to introducing Coste's framework and filling in the details in his paper.
After recalling generalities on modelled spaces in \S\ref{subsec:introducing-modelled-spaces}, we construct spectra of $ T_0 $-models in \S\ref{subsec:spectra-of-models}.
Our construction of the underlying space of a spectrum is essentially the same as Coste's,
but we define its structure sheaf similarly to the usual construction of Zariski spectra.
The proof of the dual adjunction $ \Gamma \dashv \Spec $ will be postponed to \S\ref{sec:spectra}.
In \S\ref{subsec:comparison-with-Coste-spectra}, we show that our and Coste's constructions yield an equivalent modelled toposes.
In \S\ref{subsec:standard-context}, we also give criteria for a $ T_0 $-model to be represented as the global section model of the structure sheaf of its spectrum.
We will not use the results in \S\S\ref{subsec:comparison-with-Coste-spectra}, \ref{subsec:standard-context} in the remaining part of this paper,
and the reader unfamiliar with topos theory and categorical logic can safely skip them.

In \S\ref{sec:colimits}, we then prove the existence of small colimits in $ \modsp{\mathbb{A}}{\mathbf{Sp}} $.
This is well-known for $ \mathbf{LRS} $ \cite[Chap.\ I, Proposition 1.6]{DemaGab1980}.
The construction of colimits depends only on the notions introduced before \S\ref{subsec:introducing-modelled-spaces} and thus does not exploit spectra of models.
In \S\ref{sec:spectra}, we construct relative spectra of $ T_0 $-modelled spaces over an arbitrary $ T $-modelled space and prove the forgetful-$ \Spec $ adjunction.
Here, we have to use (the underlying spaces of) spectra of $ T_0 $-models, and this is why we discuss them in advance.
Finally, in \S\ref{sec:limits}, we consider limits of modelled spaces.
We first make some observations on limits of $ T_0 $-modelled spaces.
Then, combining the results on relative spectra and limits of $ T_0 $-modelled spaces, we obtain limits in $ \modsp{\mathbb{A}}{\mathbf{Sp}} $.
The methods in the last two sections are extensions of the arguments in \cite{Gillam2011} (cf.\ \cite{BrandLimits}, \cite[\S12.7]{BlechThesis})
used to show completeness of $ \mathbf{LRS} $.
Consequently, we can show, for instance, that the category of ringed spaces whose stalks are fields is complete and cocomplete.

\paragraph{Notations and conventions}
A tuple of elements of a set $ A $ will be denoted in boldface letters (say, $ \bvec{a} $). We will write $ \bvec{a} \in A $.
For a topological space $ X $, the poset of open subsets of $ X $ will be denoted by $ \mathcal{O}(X) $.
Let $ \mathbf{Sp} $ denote the category of topological spaces and continuous maps.

\section{Review of Coste's Framework} \label{sec:Coste-context}
We begin with introducing the general settings of Coste's theory of spectra \cite{Coste1979}.

\subsection{Cartesian Theories and Locally Finitely Presentable Categories} \label{subsec:preliminaries}
We first recall some elements of first-order categorical logic and the theory of locally finitely presentable categories.
We will exploit relationship between cartesian categories, cartesian theories, and locally finitely presentable categories.
All the prerequisites below (except \emph{Lemma \ref{lem:useful-lemma}}) are covered in \cite[Chapters D1 and D2]{Elephant},
and most of our conventions for notations and terminologies owe to it.

A \emph{cartesian category} is a category having all finite limits.
If $ \mathcal{C},\mathcal{D} $ are cartesian categories, a \emph{cartesian functor} $ F \colon \mathcal{C} \to \mathcal{D} $ is a functor preserving all finite limits.
Let $ \mathfrak{Cart}(\mathcal{C},\mathcal{D}) $ denote the category of cartesian functors from $ \mathcal{C} $ to $ \mathcal{D} $ and natural transformations.

For the logical counterpart of cartesian categories,  we consider cartesian theories.
For simplicity, we will treat single-sorted languages only.
Multi-sorted languages will be necessary to establish \emph{Proposition \ref{prop:cart-cat-vs-cart-th}}(2) below,
but we will mention that result only for the sake of completeness and will not use it.
A first-order language consists of relation symbols and function symbols associated with natural numbers as arities.
The notions of term and structure are defined as usual (cf.\ \cite[\S3.1]{Ara2021}).
Formulas will be defined below so that they are built up with $ \land, \exists $ only.
Every term and formula will always be equipped with a tuple, say, $ \bvec{u} $ of variables containing all the free variables in it.
Axioms of a theory will be sequents $ \varphi \vdash \psi $.

\begin{Def}
	Let $ \mathcal{L} $ be a first-order language.
	Atomic formulas are either $ \top $, $ \rho(\bvec{u}) $ (for a relation symbol $ \rho $ in $ \mathcal{L} $), 
	or $ s(\bvec{u}) = t(\bvec{u}) $ (for $ \mathcal{L} $-terms $ s,t $).
	A finite conjunction of atomic formulas is called a \emph{Horn formula}.
	
	Let $ T $ be an arbitrary first-order $ \mathcal{L} $-theory.
	The class of \emph{$ T $-cartesian formulas} is defined as follows:
	atomic formulas are $ T $-cartesian, and finite conjunctions of $ T $-cartesian formulas are $ T $-cartesian.
	The formula $ \exists v \varphi(\bvec{u},v) $ is $ T $-cartesian provided $ v $ is a variable not contained in $ \bvec{u} $,
	$ \varphi(\bvec{u},v) $ is $ T $-cartesian, and the sequent $ \varphi \land \varphi[v'/v] \vdash v=v' $ is provable in $ T $.
	A sequent $ \varphi \vdash \psi $ is \emph{$ T $-cartesian} if both $ \varphi $ and $ \psi $ are $ T $-cartesian.
	A \emph{cartesian theory} is a set $ T $ of sequents which admits a well-founded partial ordering such that
	each axiom is cartesian relative to the subtheory consisting of the axioms preceding it.
	When we fix a cartesian theory $ T $, we refer to $ T $-cartesian formulas as cartesian formulas.
\end{Def}

For a cartesian theory $ T $, the notions of $ T $-model and homomorphism between them are defined as usual.
Let $ T\mhyphen\mathbf{Mod} $ denote the category of (set-based) $ T $-models and homomorphisms.
We will often say ``$ T $-models'' to mention those in $ \mathbf{Set} $,
and will make sure to say ``$ T $-models in $ \mathcal{C} $'' for a cartesian category $ \mathcal{C} $ apart from $ \mathbf{Set} $.
For an $ \mathcal{L} $-structure $ A $ in $ \mathcal{C} $ and an $ \mathcal{L} $-formula $ \varphi(\bvec{u}) $ interpretable in $ A $,
we will write $ \dbracket{\varphi}_A $ for the interpretation of $ \varphi $ as a subobject of $ A^{\bvec{u}} $.

Let us proceed to the third concept.
\begin{Def}
	Let $ \mathcal{K} $ be a locally small category with filtered colimits.
	We say an object $ M \in \mathcal{K} $ is \emph{finitely presentable} (a.k.a.\ $ \omega $-presentable) if it has the following property:
	for any filtered diagram $ D \colon \mathcal{J} \to \mathcal{K} $, the canonical map in $ \mathbf{Set} $
	\[ \rlim_{i \in \mathcal{J}} \mathcal{K}(M,D_i) \to \mathcal{K}(M,\rlim_{i \in \mathcal{J}} D_i) \]
	is an isomorphism.
\end{Def}
Finite presentability can be expressed as a combination of the following two conditions:
\begin{enumerate}[label=(\alph*)]
	\item Any morphism $ M \to \rlim_i D_i $ factors through some coprojection $ D_i \to \rlim_i D_i $.
	\item Such a factorization of a morphism $ M \to \rlim_i D_i $ is essentially unique:
	if a morphism $ M \to \rlim_i D_i $ factors through both $ D_i $ and $ D_j $,
	then there exist morphisms $ i \xrightarrow{u} k \xleftarrow{v} j $ in $ \mathcal{J} $ such that the composites of the two paths from $ M $ to $ D_k $ coincide.
	\[ \tikzset{cross/.style={preaction={-,draw=white,line width=6pt}}}
	\tikz[auto]{
		\node (UL) at (2,0,-1) {$ D_j $};
		\node (DL) at (3,0,2) {$ D_i $};
		\node (UR) at (5,0,0) {$ D_k $};
		\node (T) at (0,2,0) {$ M $};
		\node (T') at (5,2,0) {$ \rlim_i D_i $};
		
		\draw[->] (UL) to node {$ D_v $} (UR);
		\draw[->] (DL) to node[swap] {$ D_u $} (UR);
		\draw[->,cross] (DL) to (T');
		\draw[->] (UL) to (T');
		\draw[->] (UR) to (T');
		
		\draw[->] (T) to (DL);
		\draw[->] (T) to (T');
		\draw[->] (T) to (UL);
	} \]
\end{enumerate}

\begin{Def}
	We say a locally small category $ \mathcal{K} $ is \emph{locally finitely presentable} (a.k.a.\ locally $ \omega $-presentable) if it satisfies the following conditions:
	\begin{itemize}
		\item It has all small colimits.
		\item There exists a small set $ \mathcal{A} $ of finitely presentable objects in $ \mathcal{K} $ such that
		any object in $ \mathcal{K} $ is a filtered colimit of objects in $ \mathcal{A} $.
	\end{itemize}
	Let $ \mathcal{K}_{\fp} $ denote the full subcategory of finitely presentable objects of $ \mathcal{K} $.
\end{Def}
If $ \mathcal{K} $ is locally finitely presentable, then it also has all small limits, and $ \mathcal{K}_{\fp} $ is essentially small.

These three concepts (cartesian categories, cartesian theories, and locally finitely presentable categories) mutually correspond as follows:

\begin{Prop*}[{\cite[Theorem D1.4.7, Example D1.4.8]{Elephant}}] \label{prop:cart-cat-vs-cart-th}
	\ 
	
	\begin{enumerate}
		\item For any cartesian theory $ T $, we can construct a cartesian category $ \mathcal{C}_T $
		(called the \emph{syntactic category} of $ T $) as follows: objects of $ \mathcal{C}_T $ are $ T $-cartesian formulas. 
		For a formula $ \varphi(\bvec{u}) $, the corresponding object is denoted by $ \{\bvec{u}.\,\varphi\} $.
		A morphism $ \{\bvec{u}.\,\varphi\} \to \{\bvec{v}.\,\psi\} $ is an equivalence class of formulas $ \theta(\bvec{u},\bvec{v}) $ for which the sequents
		\begin{gather*}
			\theta \vdash \varphi \land \psi \\
			\theta \land \theta[\bvec{v'}/\bvec{v}] \vdash \bvec{v}=\bvec{v'} \\
			\varphi \vdash \exists \bvec{v} \theta
		\end{gather*}
		are provable in $ T $.
		Here, two formulas $ \theta(\bvec{u},\bvec{v}), \theta'(\bvec{u},\bvec{v}) $ are defined to be equivalent 
		if the sequents $ \theta \vdash \theta' $ and $ \theta' \vdash \theta $ are provable in $ T $.
		The tuples $ \bvec{u},\bvec{v} $ are usually assumed to be disjoint, but we will occasionally write, e.g.,
		$ \{\bvec{uv}.\,\psi\} \to \{\bvec{u}.\,\varphi\} $ if no confusion arises.
		
		Moreover, $ \mathcal{C}_T $ ``classifies $ T $-models,'' i.e.\ there is an equivalence of categories
		$ T\mhyphen\mathbf{Mod} \simeq \mathfrak{Cart}(\mathcal{C}_{T},\mathbf{Set}) $.
		
		\item For any small cartesian category $ \mathcal{C} $, there exists a (multi-sorted) cartesian theory $ T_{\mathcal{C}} $ such that
		the categories $ T_{\mathcal{C}}\mhyphen\mathbf{Mod} $ and $ \mathfrak{Cart}(\mathcal{C},\mathbf{Set}) $ are equivalent.
		\qedhere
	\end{enumerate}
\end{Prop*}

\begin{Prop*}[{\cite[Proposition D2.3.4]{Elephant}}]
	\ 
	
	\begin{enumerate}
		\item For a small cartesian category $ \mathcal{C} $, the category $ \mathfrak{Cart}(\mathcal{C},\mathbf{Set}) $ is locally finitely presentable.
		Limits and filtered colimits in $ \mathfrak{Cart}(\mathcal{C},\mathbf{Set}) $ can be computed as in $ \mathbf{Set}^{\mathcal{C}} $.
		
		\item For a locally finitely presentable category $ \mathcal{K} $, 
		the category $ \mathcal{K}_{\fp} $ is closed under finite colimits in $ \mathcal{K} $, and hence $ (\mathcal{K}_{\fp})^{\op} $ is cartesian.
		
		\item In the above notations, $ \mathfrak{Cart}(\mathcal{C},\mathbf{Set}) $ (resp.\ $ \mathcal{K} $)
		is an ind-completion of $ \mathcal{C}^{\op} $ (resp.\ $ \mathcal{K}_{\fp} $)
		via the Yoneda embedding $ \mathcal{C}^{\op} \hookrightarrow \mathfrak{Cart}(\mathcal{C},\mathbf{Set}) $ 
		(resp.\ via the embedding $ \mathcal{K}_{\fp} \hookrightarrow \mathcal{K}$). 
		Therefore, $ \mathcal{C} $ can be recovered as $ (\mathfrak{Cart}(\mathcal{C},\mathbf{Set})_{\fp})^{\op} $,
		and $ \mathcal{K} $ can be recovered as $ \mathfrak{Cart}((\mathcal{K}_{\fp})^{\op},\mathbf{Set}) $.
		\qedhere
	\end{enumerate}
\end{Prop*}

This proposition is part of the \textit{Gabriel-Ulmer duality},
which asserts that the following two $ 2 $-categories are dually biequivalent:
\begin{itemize}
	\item the $ 2 $-category of small cartesian categories, cartesian functors, and natural transformations,
	\item the $ 2 $-category of locally finitely presentable categories, functors preserving limits and filtered colimits, and natural transformations.
\end{itemize}

For a cartesian theory $ T $, by putting $ \mathcal{C}=\mathcal{C}_T $ and $ \mathcal{K} = T\mhyphen\mathbf{Mod} $ above,
we have a canonical categorical equivalence $ \mathcal{C}_{T} \simeq (T\mhyphen\mathbf{Mod}_{\fp})^{\op} $.

\begin{Prop*}[{\cite[Lemma D2.4.1]{Elephant}}] \label{prop:free-model}
	For a $ T $-cartesian formula $ \varphi(\bvec{u}) $, we will write $ M_{\varphi} $ for the finitely presentable $ T $-model corresponding to it.
	Then, for any $ T $-model $ A $, solutions of $ \varphi(\bvec{u}) $ in $ A $ are in bijection with homomorphisms $ M_{\varphi} \to A $:
	\[ \dbracket{\varphi}_A := \Set{\bvec{a} \in A \smid A \models \varphi(\bvec{a})} \cong \Hom_{T\mhyphen\mathbf{Mod}}(M_{\varphi},A).  \]
	We will often identify a solution with the corresponding homomorphism.
\end{Prop*}

Here is a list of facts we will need about syntactic categories:
\begin{Prop*}[{\cite[Lemmas D1.3.8, D1.4.4]{Elephant}}] \label{prop:generalities-on-C_T}
	Let $ T $ be a cartesian theory.
	
	\begin{enumerate}
		\item Any object of $ \mathcal{C}_T $ is isomorphic to $ \{\bvec{u}.\, \varphi \} $ for some Horn formula $ \varphi(\bvec{u}) $.
		
		\item In the arrow category of $ \mathcal{C}_T $, any morphism $ [\theta] \colon \{\bvec{u}.\,\varphi\} \to \{\bvec{v}.\,\psi\} $ is isomorphic to one of the form
		\[ [\bvec{u'}=\bvec{u''}] \colon \{\bvec{u'v'}.\,\varphi' \} \to \{\bvec{u''}.\,\psi' \}, \] 
		where $ \varphi' $ and $ \psi' $ are Horn formulas, and $ T \models \varphi'(\bvec{u'},\bvec{v'}) \vdash \psi'(\bvec{u'}) $.
		
		\item A morphism $ [\theta] \colon \{\bvec{u}.\,\varphi\} \to \{\bvec{v}.\,\psi\} $ is monic exactly when
		\[ T \models \theta(\bvec{u},\bvec{v}) \land \theta(\bvec{u'},\bvec{v}) \vdash \bvec{u}=\bvec{u'}. \qedhere \]
	\end{enumerate}
\end{Prop*}

Closing this preliminary subsection, we record an easy but useful lemma:
\begin{Lem} \label{lem:useful-lemma}
	Let $ \mathcal{K} $ be a locally finitely presentable category, and $ A = \rlim_i A_i $ a filtered colimit in $ \mathcal{K} $ with coprojections $ \alpha_i \colon A_i \to A $.
	If we have a commutative square 
	\[ \tikz[auto]{
		\node (UL) at (0,2) {$ M $};
		\node (UR) at (2,2) {$ A_i $};
		\node (DL) at (0,0) {$ N $};
		\node (DR) at (2,0) {$ A $};
		\draw[->] (UL) to node {$ m $} (UR);
		\draw[->] (UL) to node {$ k $} (DL);
		\draw[->] (DL) to node {$ n $} (DR);
		\draw[->] (UR) to node {$ \alpha_i $} (DR);
	}  \]
	with $ M,N \in \mathcal{K}_{\fp} $, then there exist morphisms $ i \to j $ in the indexing category and $ N \to A_{j} $ in $ \mathcal{K} $ 
	such that the following diagram commutes:
	\[ \tikz[auto]{
		\node (UL) at (0,2) {$ M $};
		\node (UR) at (2,2) {$ A_i $};
		\node (DL) at (0,0) {$ N $};
		\node (DR) at (2,0) {$ A_{j} $};
		\draw[->] (UL) to node {$ m $} (UR);
		\draw[->] (UL) to node {$ k $} (DL);
		\draw[->] (DL) to (DR);
		\draw[->] (UR) to (DR);
	}.  \]
\end{Lem}
\begin{proof}
	Use finite presentability of $ N $ to obtain a factorization $ N \to A_{i'} \to A $ of $ n $,
	and use finite presentability of $ M $ to take $ i \to j \leftarrow i' $ such that the composites of the two paths from $ M $ to $ A_j $ coincide.
\end{proof}

\subsection{Admissible Morphisms and $ \mathcal{V}_A $}
Let $ \mathcal{K} $ be a locally finitely presentable category.
Coste \cite{Coste1979} developed a method to construct a factorization system in $ \mathcal{K} $ from a class of morphisms.

\begin{Def} \label{def:admissible-map}
	Let $ \mathcal{V} $ be a class of morphisms between finitely presentable objects.
	We say a morphism $ \alpha \colon A \to B $ in $ \mathcal{K} $ is \emph{$ \mathcal{V} $-admissible}
	when, for each commutative diagram
	\[ \tikz[auto]{
		\node (UL) at (0,2) {$ M $};
		\node (UR) at (2,2) {$ N $};
		\node (DL) at (0,0) {$ A $};
		\node (DR) at (2,0) {$ B $};
		\draw[->] (UL) to node {$ k $} (UR);
		\draw[->] (UL) to (DL);
		\draw[->] (DL) to node {$ \alpha $} (DR);
		\draw[->] (UR) to (DR);
	}  \]
	with $ k \in \mathcal{V} $, there exists a unique filler $ \beta \colon N \to A $ making the two triangles commutative:
	\[ \tikz[auto]{
		\node (UL) at (0,2) {$ M $};
		\node (UR) at (2,2) {$ N $};
		\node (DL) at (0,0) {$ A $};
		\node (DR) at (2,0) {$ B $};
		\draw[->] (UL) to node {$ k $} (UR);
		\draw[->] (UL) to (DL);
		\draw[->] (DL) to node {$ \alpha $} (DR);
		\draw[->] (UR) to (DR);
		\draw[dashed,->] (UR) to node {$ \exists ! \beta $} (DL);
	}.  \]
\end{Def}
The class of $ \mathcal{V} $-admissible morphisms is often denoted by $ \mathcal{V}^{\bot} $ in the literature.
Let $ \mathbf{2} $ be the category $ \{0 \to 1\} $, and $ \Mor(\mathcal{K}) $ be the class of morphisms in $ \mathcal{K} $.

\begin{Prop*}[{\cite[Proposition 3.14]{Osm2020a}}] \label{prop:admissibility-is-invariant}
	Let $ \tilde{\mathcal{V}} \subseteq \Mor(\mathcal{K}) $ be the smallest class of morphisms containing $ \mathcal{V} $ 
	and all isomorphisms between finitely presentable objects and which is
	\begin{itemize}
		\item closed under composition,
		\item right cancellative, i.e.\ $ k \in \tilde{\mathcal{V}} $ whenever $ l , kl \in \tilde{\mathcal{V}} $,
		\item closed under pushouts along morphisms in $ \mathcal{K}_{\fp} $, and 
		\item closed under finite colimits in $ \mathcal{K}^{\mathbf{2}} $.
	\end{itemize}
	Then $ \tilde{\mathcal{V}} \subseteq \Mor(\mathcal{K}_{\fp}) $ (recall that $ \mathcal{K}_{\fp} $ is closed under finite colimits in $ \mathcal{K} $),
	and $ \mathcal{V} $-admissible morphisms are $ \tilde{\mathcal{V}} $-admissible.
\end{Prop*}

We will call a class $ \mathcal{V} $ of morphisms \emph{saturated} if $ \tilde{\mathcal{V}} = \mathcal{V} $.
We fix a saturated class $ \mathcal{V} \subseteq \Mor(\mathcal{K}_{\fp}) $ for the moment.

Coste introduced another class of morphisms to obtain a factorization system whose right class is $ \mathcal{V}^{\bot} $.
\begin{Def}
	For $ A \in \mathcal{K}$, we define the class $ \mathcal{V}_A $ of morphisms with domain $ A $ as follows:
	a morphism $ \lambda \colon A \to B $ in $ \mathcal{K} $ belongs to $ \mathcal{V}_A $ if it can be represented as a pushout
	\[ \tikz[auto]{
		\node (UL) at (0,2) {$ M $};
		\node (UR) at (2,2) {$ N $};
		\node (DL) at (0,0) {$ A $};
		\node (DR) at (2,0) {$ B $};
		\draw[->] (UL) to node {$ k $} (UR);
		\draw[->] (UL) to (DL);
		\draw[->] (DL) to node {$ \lambda $} (DR);
		\draw[->] (UR) to (DR);
		\node (po) at (1.5,0.5) {$ \ulcorner $};
	}  \]
	of a morphism $ k \in \mathcal{V} $ along a morphism in $ \mathcal{K} $.
	If $ \lambda \in \mathcal{V}_A $, we will write $ A_{\lambda} $ for its codomain.
\end{Def}
We will often regard $ \mathcal{V}_A $ as a full subcategory of the comma category $ A \downarrow \mathcal{K} $.
Thus, a morphism $ \lambda \to \mu $ in $ \mathcal{V}_A $ is given by a commutative diagram in $ \mathcal{K} $
\[ \tikz[auto]{
	\node (UL) at (0,0) {$ A_{\lambda} $};
	\node (UR) at (2,0) {$ A_{\mu} $};
	\node (DL) at (1,1) {$ A $};
	\draw[->] (UL) to (UR);
	\draw[->] (DL) to node[swap] {$ \lambda $} (UL);
	\draw[->] (DL) to node {$ \mu $} (UR);
}. \]
Notice that $ \mathcal{V}_A $ is essentially small since $ \mathcal{V} $ and the class of spans of the form $ A \leftarrow M \xrightarrow{k} N $ with $ k \in \mathcal{V} $ are essentially small.

The following lemma is the most important tool, and we will frequently use it in the remainder of this paper.
\begin{Lem}[Lifting lemma, {\cite[Sub-lemma 12]{Anel2009}}] \label{lem:lifting-lemma}
	\ 
	
	\noindent If we have a diagram in $ \mathcal{K} $
	\[ \tikz[auto]{
		\node (UL) at (0,2) {$ M $};
		\node (UR) at (2,2) {$ N $};
		\node (DL) at (0,0) {$ A $};
		\node (DR) at (2,0) {$ B $};
		\node (Out) at (4,2) {$ L $};
		
		\draw[->] (UL) to node {$ k $} (UR);
		\draw[->] (UL) to (DL);
		\draw[->] (DL) to node {$ \lambda $} (DR);
		\draw[->] (UR) to (DR);
		\draw[->] (Out) to node {$ m $} (DR);
		
		\node (po) at (1.5,0.5) {$ \ulcorner $};
	},  \]
	with $ M,N,L \in \mathcal{K}_{\fp} $,
	then the pushout square can be factored into two pushouts 
	\[ \tikz[auto]{
		\node (UL) at (0,3) {$ M $};
		\node (UR) at (2,3) {$ N $};
		\node (ML) at (0,1.5) {$ M' $};
		\node (MR) at (2,1.5) {$ N' $};
		\node (DL) at (0,0) {$ A $};
		\node (DR) at (2,0) {$ B $};
		\node (Out) at (4,1.5) {$ L $};
		
		\draw[->] (UL) to node {$ k $} (UR);
		\draw[->] (UL) to (ML);
		\draw[->] (ML) to (DL);
		\draw[->] (DL) to node {$ \lambda $} (DR);
		\draw[->] (UR) to (MR);
		\draw[->] (MR) to (DR);
		\draw[->] (ML) to (MR);
		\draw[dashed,->] (Out) to (MR);
		\draw[->] (Out) to node {$ m $} (DR);
		
		\node (po1) at (1.5,0.5) {$ \ulcorner $};
		\node (po1) at (1.5,2) {$ \ulcorner $};
	}  \]
	such that $ M',N' \in \mathcal{K}_{\fp} $ and $ m $ lifts as above.
\end{Lem}
\begin{proof}
	Obviously, for any filtered category $ \mathcal{J} $ and any object $ j \in \mathcal{J} $,
	the comma category $ j \downarrow \mathcal{J} $ is again filtered, and the projection $ j \downarrow \mathcal{J} \to \mathcal{J} $ is a final functor.
	Thus, $ A $ can be represented as a filtered colimit $ \rlim_i M_i $ of finitely presentable objects 
	such that the indexing category of $ \{M_i\}_i $ has an initial object $ 0 $,
	and the given $ M \to A $ factors through the coprojection $ M_0 \to A $ (and hence every coprojection $ M_i \to A$).
	Let $ M_i \to N_i $ be a pushout of $ k $ along $ M \to M_i $ for each $ i $.
	Then, for each $ i $, the initially given pushout can be factored as
	\[ \tikz[auto]{
		\node (UL) at (0,3) {$ M $};
		\node (UR) at (2,3) {$ N $};
		\node (ML) at (0,1.5) {$ M_i $};
		\node (MR) at (2,1.5) {$ N_i $};
		\node (DL) at (0,0) {$ A $};
		\node (DR) at (2,0) {$ B $};
		
		\draw[->] (UL) to node {$ k $} (UR);
		\draw[->] (UL) to (ML);
		\draw[->] (ML) to (DL);
		\draw[->] (DL) to node {$ \lambda $} (DR);
		\draw[->] (UR) to (MR);
		\draw[->] (MR) to (DR);
		\draw[->] (ML) to (MR);
		
		\node (po1) at (1.5,0.5) {$ \ulcorner $};
		\node (po1) at (1.5,2) {$ \ulcorner $};
	}.  \]
	Moreover, by commutation of colimits, we have $ B \cong \rlim_i N_i $.
	Therefore, $ m $ factors through some $ N_i $ as desired.
\end{proof}

\begin{Lem}[{\cite[Proposition 3.3.2]{Coste1979}}] \label{lem:closure-properties-of-V_A}
	Let $ \mathcal{V} $ be a saturated class of morphisms in $ \mathcal{K}_{\fp} $.
	
	\begin{enumerate}
		\item If $ (\lambda \colon A \to A_{\lambda}) \in \mathcal{V}_A $ and $ (\mu \colon A_{\lambda} \to A_{\mu}) \in \mathcal{V}_{A_{\lambda}} $,
		then the composite $ \mu\lambda \colon A \to A_{\mu} $ is in $ \mathcal{V}_A $.
		
		\item If $ (\lambda \colon A \to A_{\lambda}) \in \mathcal{V}_A $ and $ (\mu\lambda \colon A \to A_{\mu}) \in \mathcal{V}_A $
		for a morphism $ \mu \colon A_{\lambda} \to A_{\mu} $, then $ \mu \in \mathcal{V}_{A_{\lambda}} $.
		
	\end{enumerate}
\end{Lem}
\begin{proof}
	\noindent (1) If there exists a pushout
	\[ \tikz[auto]{
		\node (UL) at (0,2) {$ L $};
		\node (UR) at (2,2) {$ K $};
		\node (DL) at (0,0) {$ A_{\lambda} $};
		\node (DR) at (2,0) {$ A_{\mu} $};
		\draw[->] (UL) to node {$ l \in \mathcal{V} $} (UR);
		\draw[->] (UL) to node {$ m $} (DL);
		\draw[->] (DL) to node {$ \mu $} (DR);
		\draw[->] (UR) to (DR);
		\node (po) at (1.5,0.5) {$ \ulcorner $};
	},  \]
	then, by the previous lemma, we can find a diagram
	\[ \tikz[auto]{
		\node (UL) at (0,2) {$ M $};
		\node (UR) at (2,2) {$ N $};
		\node (DL) at (0,0) {$ A $};
		\node (DR) at (2,0) {$ A_{\lambda} $};
		\node (Out) at (4,2) {$ L $};
		
		\draw[->] (UL) to node {$ k \in \mathcal{V} $} (UR);
		\draw[->] (UL) to (DL);
		\draw[->] (DL) to node {$ \lambda $} (DR);
		\draw[->] (UR) to (DR);
		\draw[->] (Out) to node {$ m $} (DR);
		\draw[dashed,->] (Out) to node {$ n $} (UR);
		
		\node (po) at (1.5,0.5) {$ \ulcorner $};
	}.  \]
	Consider the pushouts below
	\[ \tikz[auto]{
		\node (UL) at (0,3) {$ L $};
		\node (UR) at (2,3) {$ K $};
		\node (ML) at (0,1.5) {$ N $};
		\node (MR) at (2,1.5) {$ n_*K $};
		\node (DL) at (0,0) {$ A_{\lambda} $};
		\node (DR) at (2,0) {$ A_{\mu} $};
		
		\draw[->] (UL) to node {$ l $} (UR);
		\draw[->] (UL) to node {$ n $} (ML);
		\draw[->] (ML) to (DL);
		\draw[->] (DL) to node {$ \mu $} (DR);
		\draw[->] (UR) to (MR);
		\draw[->] (MR) to (DR);
		\draw[->] (ML) to (MR);
		\draw[bend right=40pt,->] (UL) to node[left] {$ m $} (DL);
		
		\node (po1) at (1.5,0.5) {$ \ulcorner $};
		\node (po1) at (1.5,2) {$ \ulcorner $};
	}.  \]
	Then, $ \mu\lambda $ is a pushout of $ (M \xrightarrow{k} N \to n_*K) \in \mathcal{V} $.
	
	\noindent (2) If there exists a pushout
	\[ \tikz[auto]{
		\node (UL) at (0,2) {$ M $};
		\node (UR) at (2,2) {$ N $};
		\node (DL) at (0,0) {$ A $};
		\node (DR) at (2,0) {$ A_{\lambda} $};
		
		\draw[->] (UL) to node {$ k \in \mathcal{V} $} (UR);
		\draw[->] (UL) to node {$ m $} (DL);
		\draw[->] (DL) to node {$ \lambda $} (DR);
		\draw[->] (UR) to node {$ n $} (DR);
		
		\node (po) at (1.5,0.5) {$ \ulcorner $};
	},  \]
	then, by an argument similar to the proof of the previous lemma, we can find a diagram
	\[ \tikz[auto]{
		\node (UL) at (0,2) {$ L $};
		\node (UR) at (2,2) {$ K $};
		\node (DL) at (0,0) {$ A $};
		\node (DR) at (2,0) {$ A_{\mu} $};
		\draw[->] (UL) to node {$ l \in \mathcal{V} $} (UR);
		\draw[->] (UL) to (DL);
		\draw[->] (DL) to node {$ \mu\lambda $} (DR);
		\draw[->] (UR) to node {$ h $} (DR);
		
		\node (Out) at (-2,2) {$ M $};
		\draw[->] (Out) to node {$ m $} (DL);
		\draw[dashed,->] (Out) to node {$ m' $} (UL);
		
		\node (po) at (1.5,0.5) {$ \ulcorner $};
	}.  \]
	Again by applying the lemma to $ \mu n $,
	we can replace $ l $, $m' $, etc., so that there exists a diagram
	\[ \tikzset{cross/.style={preaction={-,draw=white,line width=6pt}}}
	\tikz[auto]{
		\node (UL) at (0,2) {$ M $};
		\node (DL) at (0,0) {$ N $};
		\node (fUL) at (2,1) {$ L $};
		\node (fBL) at (2,-2) {$ K $};
		\node (UR) at (5,2) {$ A $};
		\node (DR) at (5,0) {$ A_{\lambda} $};
		\node (BR) at (5,-2) {$ A_{\mu} $};
		
		\draw[->] (UL) to node {$ m $} (UR);
		\draw[->] (UL) to node[left] {$ k $} (DL);
		\draw[->] (DL) to node[above] {$ n $} (DR);
		\draw[->] (UR) to node {$ \lambda $} (DR);
		
		\draw[->] (UL) to node {$ m' $}(fUL);
		\draw[dashed,->] (DL) to node {$ n' $} (fBL);
		\draw[->,cross] (fUL) to node {$ l $} (fBL);
		\draw[->] (fUL) to (UR);
		\draw[->] (fBL) to node {$ h $} (BR);
		\draw[->] (DR) to node {$ \mu $} (BR);
	}, \]
	where $ hn'=\mu n $.
	Since $ hn'k=hlm' $ and $ M \in \mathcal{K}_{\fp} $, we may further assume that $ n'k=lm' $.
	By taking a pushout of $ k $ along $ m' $, we obtain the desired diagram
	\[ \tikzset{cross/.style={preaction={-,draw=white,line width=6pt}}}
	\tikz[auto]{
		\node (UL) at (0,2,0) {$ M $};
		\node (DL) at (0,0,0) {$ N $};
		\node (fUL) at (3,2,3) {$ L $};
		\node (fDL) at (3,0,3) {$ k_*L $};
		\node (fBL) at (3,-2,3) {$ K $};
		\node (UR) at (5,2,0) {$ A $};
		\node (DR) at (5,0,0) {$ A_{\lambda} $};
		\node (BR) at (5,-2,0) {$ A_{\mu} $};
		
		\draw[->] (UL) to node {$ m $} (UR);
		\draw[->] (UL) to node[left] {$ k $} (DL);
		\draw[->] (DL) to node[above] {$ n $} (DR);
		\draw[->] (UR) to node {$ \lambda $} (DR);
		
		\draw[->] (UL) to node {$ m' $}(fUL);
		\draw[->] (DL) to (fDL);
		\draw[->,cross] (fUL) to (fDL);
		\draw[->] (fUL) to (UR);
		\draw[->] (fDL) to (DR);
		\draw[->] (fDL) to (fBL);
		\draw[->] (fBL) to node {$ h $} (BR);
		\draw[->] (DR) to node {$ \mu $} (BR);
	}, \]
	where each vertical arrow between finitely presentable objects is in $ \mathcal{V} $, and each vertical square is a pushout.
	Here, we have used the right cancellation property of $ \mathcal{V} $ to show $ (k_*L \to K) \in \mathcal{V} $.
\end{proof}

\begin{Prop} \label{prop:V_A-is-cocartesian}
	$ \mathcal{V}_A $ is a cocartesian category, i.e.\ it has all finite colimits.
\end{Prop}
\begin{proof}
	The initial object of $ \mathcal{V}_A $ is the identity $ \id_A $.
	Given a commutative diagram in $ \mathcal{K} $
	\[ \tikzset{cross/.style={preaction={-,draw=white,line width=6pt}}}
	\tikz[auto]{
		\node (UL) at (0,0,0) {$ A_{\lambda} $};
		\node (DL) at (3,0,2) {$ A_{\mu} $};
		\node (UR) at (3,0,0) {$ A_{\nu} $};
		\node (T) at (0,1.5,0) {$ A $};
		
		\draw[->] (UL) to (UR);
		\draw[->] (UL) to (DL);
		
		\draw[->,cross] (T) to node[swap,pos=0.3] {$ \mu $} (DL);
		\draw[->] (T) to node {$ \nu $} (UR);
		\draw[->] (T) to node[swap] {$ \lambda $} (UL);
	} \]
	with $ \lambda,\mu,\nu \in \mathcal{V}_A $,
	the previous lemma implies that a pushout of the bottom span in $ \mathcal{K} $ yields a pushout in $ \mathcal{V}_A $.
\end{proof}

Based on these settings, Coste proved the following theorem:
\begin{Thm}[Factorization theorem] \label{thm:factorization-thm}
	For any morphism $ \alpha \colon A \to B $ in $ \mathcal{K} $,
	there exists a factorization
	\[ \tikz[auto]{
		\node (L) at (0,1.5) {$ A $};
		\node (C) at (2,0) {$ C $};
		\node (R) at (4,1.5) {$ B $};
		\draw[->] (L) to node {$ \alpha $} (R);
		\draw[->] (L) to node[swap] {$ \beta $} (C);
		\draw[->] (C) to node[swap] {$ \gamma $} (R);
	} \]
	with $ \gamma $ $ \mathcal{V} $-admissible, which is initial among factorizations $ A \to C' \xrightarrow{\gamma'} B $ of $ \alpha $ with $ \gamma' $ $ \mathcal{V} $-admissible.
\end{Thm}
\begin{proof}
	Consider the comma square in the $ 2 $-category $ \mathfrak{CAT} $ of categories
	\[ \tikz[auto]{
		\node (UL) at (0,2) {$ \mathcal{V}_A \downarrow \alpha $};
		\node (UR) at (2,2) {$ 1 $};
		\node (DL) at (0,0) {$ \mathcal{V}_A  $};
		\node (DR) at (2,0) {$ A \downarrow \mathcal{K} $};
		\node[rotate=45] (center) at (1,1) {$ \Rightarrow $};
		\draw[->] (UL) to (UR);
		\draw[->] (UL) to (DL);
		\draw[->] (DL) to (DR);
		\draw[->] (UR) to node {$ \alpha $} (DR);
	}.  \]
	Then $ \mathcal{V}_A \downarrow \alpha $ is filtered.
	Since filtered colimits in $ A \downarrow \mathcal{K} $ are created by the forgetful functor $ A \downarrow \mathcal{K} \to \mathcal{K} $,
	we have a factorization 
	\[ \tikz[auto]{
		\node (L) at (0,1.5) {$ A $};
		\node (C) at (2,0) {$ \displaystyle \rlim_{\lambda \in \mathcal{V}_A \downarrow \alpha} A_{\lambda} $};
		\node (R) at (4,1.5) {$ B $};
		\draw[->] (L) to node {$ \alpha $} (R);
		\draw[->] (L) to (C);
		\draw[->] (C) to (R);
	}. \]
	This is the desired factorization. All the remaining details can be found in \cite[\S2.3]{Anel2009}.
\end{proof}

We will not use this theorem at all, but we will occasionally point out its importance in relation to the general theory of topos-theoretic spectra.

\subsection{Coste Contexts} \label{subsec:Coste-context}
Next, let us consider the case when $ \mathcal{K} = T_0\mhyphen\mathbf{Mod} $ for a cartesian theory $ T_0 $.
By \emph{Proposition \ref{prop:generalities-on-C_T}} and the equivalence $ T_0\mhyphen\mathbf{Mod}_{\fp} \simeq (\mathcal{C}_{T_0})^{\op} $,
giving a morphism in $ T_0\mhyphen\mathbf{Mod}_{\fp} $ is equivalent to 
giving a pair $ (\varphi(\bvec{u}),\psi(\bvec{u},\bvec{v})) $ of Horn formulas such that $ T_0 \models \psi(\bvec{u},\bvec{v}) \vdash \varphi(\bvec{u}) $.
In order to give a setting for spectra, we also need to specify the class of ``local algebras'' which will appear as stalks of structure sheaves.
These observations lead to the following definition:

\begin{Def} \label{def:Coste-context}
	A \emph{Coste context} is a triple $ (T_0,T,\Lambda) $ such that
	\begin{itemize}
		\item $ T_0 $ is a cartesian $ \mathcal{L} $-theory,
		\item $ \Lambda $ is a set of pairs $ (\varphi(\bvec{u}),\psi(\bvec{u},\bvec{v})) $ of Horn $ \mathcal{L} $-formulas with $ T_0 \models \psi(\bvec{u},\bvec{v}) \vdash \varphi(\bvec{u}) $, and
		\item $ T $ is a coherent $ \mathcal{L} $-theory obtained by adding to $ T_0 $ some axioms of the form
		\[ \varphi(\bvec{u}) \vdash \bigvee_i \exists \bvec{v}_i \psi_i(\bvec{u},\bvec{v}_i) \]
		for finitely many pairs $ (\varphi(\bvec{u}),\psi_i(\bvec{u},\bvec{v}_i)) \in \Lambda $. \qedhere
	\end{itemize}
\end{Def}
Coste \cite{Coste1979} called it a \textit{localisation triple}.
The disjunction in the axiom of $ T $ can be the empty disjunction, i.e.\ an axiom can be of the form $ \varphi(\bvec{u}) \vdash \bot $ 
for some formula $ \varphi(\bvec{u}) $ which does not necessarily appear in $ \Lambda $.
We remark that the condition on $ T $ is not so restrictive since any coherent theory can be axiomatized in this form \cite[Proposition D1.3.10(iii)]{Elephant} for some $ \Lambda $.

Hereafter, we fix a Coste context $ (T_0,T,\Lambda) $.
Let $ \mathcal{V} \subseteq \Mor(T_0\mhyphen\mathbf{Mod}_{\fp}) $ be the saturated class generated by 
the morphisms $ M_{\varphi} \to M_{\psi} $ corresponding to $ \{\bvec{uv}.\,\psi\} \to \{\bvec{u}.\,\varphi\} $ for $ (\varphi,\psi) \in \Lambda $
(recall the notation in \emph{Proposition \ref{prop:free-model}}).
We will simply say admissible morphisms to mean $ \mathcal{V} $-admissible morphisms.
By \emph{Proposition \ref{prop:admissibility-is-invariant}}, a morphism $ \alpha \colon A \to B $ is admissible if and only if it has the unique lifting property
\[ \tikz[auto]{
	\node (UL) at (0,2) {$ M_{\varphi} $};
	\node (UR) at (2,2) {$ M_{\psi} $};
	\node (DL) at (0,0) {$ A $};
	\node (DR) at (2,0) {$ B $};
	\draw[->] (UL) to (UR);
	\draw[->] (UL) to node[swap] {$ \bvec{a} $} (DL);
	\draw[->] (DL) to node {$ \alpha $} (DR);
	\draw[->] (UR) to node {$ \langle \alpha(\bvec{a}),\bvec{b} \rangle $} (DR);
	\draw[dashed,->] (UR) to node[swap] {$ \exists! $} (DL);
}  \]
for each $ (\varphi,\psi) \in \Lambda $.
In other words, if $ A \models \varphi(\bvec{a}) $ and $ B \models \psi(\alpha(\bvec{a}), \bvec{b}) $,
then there exists a unique tuple $ \bvec{a'} \in A $ such that $ A \models \psi(\bvec{a},\bvec{a'}) $ and $ \alpha(\bvec{a'}) = \bvec{b} $.
Therefore, $ \alpha \colon A \to B $ is admissible iff the diagram in $ \mathbf{Set} $
\[ \tikz[auto]{
	\node (UL) at (0,2) {$ \dbracket{\psi}_A $};
	\node (UR) at (2,2) {$ \dbracket{\psi}_B $};
	\node (DL) at (0,0) {$ \dbracket{\varphi}_A $};
	\node (DR) at (2,0) {$ \dbracket{\varphi}_B $};
	\draw[->] (UL) to node {$ \alpha $} (UR);
	\draw[->] (UL) to (DL);
	\draw[->] (DL) to node {$ \alpha $} (DR);
	\draw[->] (UR) to (DR);
}  \]
is a pullback for each $ (\varphi,\psi) \in \Lambda $.

\begin{Lem} \label{lem:adm-map-reflect-T-models}
	Admissible morphisms reflect the property of being a $ T $-model.
\end{Lem}
\begin{proof}
	Suppose that $ \alpha \colon A \to B $ is admissible, and that $ B $ is a $ T $-model.
	Let $ \varphi \vdash \bigvee_i \exists \bvec{v}_i \psi_i(\bvec{v}_i) $ be an additional axiom in $ T $.
	If $ A \models \varphi(\bvec{a}) $, then $ B \models \varphi(\alpha(\bvec{a})) $, and there exists $ i $ such that $ B \models \exists \bvec{v}_i \psi_i(\alpha(\bvec{a}), \bvec{v}_i) $.
	Choose $ \bvec{b} \in B $ so that $ B \models \psi_i(\alpha(\bvec{a}), \bvec{b}) $.
	Then, by admissibility, there exists a unique tuple $ \bvec{a'} \in A $ such that $ A \models \psi_i(\bvec{a},\bvec{a'}) $ and $ \alpha(\bvec{a'}) = \bvec{b} $.
	Thus, we showed $ A \models \varphi \vdash \bigvee_i \exists \bvec{v}_i \psi_i(\bvec{v}_i) $.
	Notice that, if $ (\varphi(\bvec{u}) \vdash \bot) \in T$,
	then $ B \models \varphi(\alpha(\bvec{a})) \vdash \bot $ implies $ A \models \varphi(\bvec{a}) \vdash \bot $ for an arbitrary homomorphism $ \alpha $.
\end{proof}

The factorization theorem shows that any homomorphism $ \alpha \colon A \to B $ with $ B $ a $ T $-model has a factorization 
$ A \xrightarrow{\beta} C \xrightarrow{\gamma} B $ initial among those with $ \gamma $ admissible.
By the proof of the theorem, $ C $ is a filtered colimit of $ A_{\lambda} $'s.
The previous lemma shows that $ C $ is a $ T $-model.
A morphism $ A \to C $ with $ C \models T $ which is a filtered colimit of $ A_{\lambda} $'s
is called \textit{a localization of $ A $} in \cite[Definition 3.4.2]{Coste1979}.
A filtered diagram in $ \mathcal{V}_A $ producing a localization of $ A $ will be considered as ``a point of $ \Spec(A) $''
(\emph{Definition \ref{def:Spec(A)}}; cf.\ \cite[Proposition 4.1.12]{Osm2021a}).

We now provide several known examples of Coste contexts and the corresponding spectra.
Each spectrum is equipped with a structure sheaf of $ T $-models, and here we also describe its stalks.
We will explain in the next section the general construction of spectra and concrete descriptions of $ \mathcal{V}_A $ for the Zariski and DL contexts.
The first example is the trivial Coste context $ (T_0,T_0,\varnothing) $, for which any homomorphism is admissible.

\begin{Expl} \label{expl:Zariski-context}
	Let $ T_0 $ be the theory of commutative rings.
	The theory $ T $ of local rings contains the additional axioms
	\[ 0=1 \vdash \bot, \qquad  u=u \vdash \exists v(uv=1) \lor \exists v ((1-u)v=1). \]
	Put	$ \Lambda = \{(u=u,uv=1),\,(u=u,(1-u)v=1) \} $.
	Then, admissible homomorphisms are homomorphisms reflecting invertible elements.
	We will call this triple the \emph{Zariski context}.
	
	Of course, spectra for the Zariski context are the Zariski spectra $ \Spec_Z(A) $.
	It is the set of prime ideals of $ A $ equipped with the topology whose basic open sets are 
	$ D_a := \Set{\mathfrak{p} \in \Spec_Z(A) \smid a \notin \mathfrak{p}} $ for $ a \in A $.
	The stalk of the structure sheaf at a prime ideal $ \mathfrak{p} $ is the localization $ A_{\mathfrak{p}} $.
\end{Expl}

\begin{Expl} \label{expl:contexts-for-rings}
	The following are examples of Coste contexts for which we will not describe the details of their spectra.
	The details can be found in \cite[Chapter V]{JohSS} (see also \cite{Kenn1976} and \cite{Joh1977}).
	Let $ T_0 $ be the theory of commutative rings.
	\begin{enumerate}
		\item The theory $ T $ of integral domains contains the additional axioms
		\[ 0=1 \vdash \bot, \qquad uu'=0 \vdash u=0 \lor u'=0. \]
		Put $ \Lambda := \{(uu'=0,u=0),\,(uu'=0,u'=0) \} $.
		The quantification $ \exists v $ is suppressed for a variable $ v $ distinct from $ u,u' $.
		Then admissible homomorphisms are injective homomorphisms.
		Endow the set $ \Spec_Z(A) $ with the ``inverse topology'' of the Zariski spectrum.
		The stalk at $ \mathfrak{p} $ is $ A/\mathfrak{p} $.
		This spectrum is called the domain spectrum of $ A $.
		
		\item The theory $ T $ of fields contains the additional axioms
		\[ 0=1 \vdash \bot, \qquad u=u \vdash u=0 \lor \exists v (uv=1). \]
		Put $ \Lambda := \{(u=u,u=0),\,(u=u,uv=1) \} $.
		Then admissible homomorphisms are injective homomorphisms reflecting invertible elements.
		Here, every homomorphism between fields is admissible.
		Endow the set $ \Spec_Z(A) $ with the ``patch topology'' of the Zariski spectrum.
		The stalk at $ \mathfrak{p} $ is the fraction field of $ A/\mathfrak{p} $, which is also isomorphic to the residue field of $ A_{\mathfrak{p}} $.
		This spectrum is called the field spectrum of $ A $.
		
		\item The theory $ T $ of indecomposable rings contains the additional axioms
		\[ 0=1 \vdash \bot, \qquad u^2=u \vdash u=0 \lor u=1. \]
		Put $ \Lambda := \{(u^2=u,u=0),\,(u^2=u,u=1) \} $.
		Then admissible homomorphisms are homomorphisms injective on idempotents.
		The underlying space of the spectrum is the Stone space of the Boolean algebra of idempotents of $ A $.
		The stalk at an ultrafilter $ \mathfrak{u} $ is the localization $ A[\mathfrak{u}^{-1}] $. 
		This is the Pierce spectrum of $ A $ introduced in \cite{Pierce1967}. \qedhere
	\end{enumerate}
\end{Expl}

\begin{Expl} \label{expl:DL-context}
	Let $ T_0 $ be the theory of (bounded) distributive lattices.
	To avoid confusion between joins/meets and the logical connectives $ \lor $/$ \land $, we use $ \sup $/$ \inf $ to denote joins/meets.
	The theory $ T $ of local lattices contains the additional axioms
	\[ 0=1 \vdash \bot, \qquad \sup(u,u') =1 \vdash u=1 \lor u'=1. \]
	Put $ \Lambda = \{ (\sup(u,u')=1,u=1),\,(\sup(u,u')=1,u'=1)\} $.
	Then admissible homomorphisms are homomorphisms reflecting $ 1 $.
	We will call this triple the \emph{DL context}.
	
	Let $ L $ be a distributive lattice.
	The set $ \Spec_D(L) $ of prime filters of $ L $, equipped with the topology generated by the basic open subsets 
	$ D(a) := \Set{\mathfrak{p} \in \Spec_D(L) \smid a \in \mathfrak{p}} $ for $ a \in L $, is called the \emph{(prime filter) spectrum of $ L $}.
	An open set of $ \Spec_D(L) $ is identified with an ideal of $ L $:
	the poset $ \mathcal{O}(\Spec_D(L)) $ of open subsets and the poset $ \Idl(L) $ of ideals are isomorphic.
	As we will see in \emph{Proposition \ref{prop:spec(A)-is-spectral}},
	any spectrum in our setting will be homeomorphic to $ \Spec_D(L) $ for some $ L $.
	The stalk at $ \mathfrak{p} $ is the filter-quotient $ L/\mathfrak{p} $, i.e.\ the quotient of $ L $ by the equivalence relation $ a \sim b \iff \exists c \in \mathfrak{p},\, \inf(a,c)=\inf(b,c) $.
\end{Expl}
Some other examples can be found in \cite{Coste1979}, \cite[pp.~205--209]{JohTT} and \cite[Chapter 9]{OsmondThesis} (cf.\ \cite[\S4]{Anel2009}).
Notice that, in general, the model of global sections of the structure sheaf of $ \Spec(A) $ need not recover the original model $ A $.
This holds for the Zariski context, the DL context, and Pierce spectra.
In \S\ref{subsec:standard-context}, we will give criteria for a $ T_0 $-model to be isomorphic to the global section model of the structure sheaf.

\section{Spatial Coste Contexts and Spectra} \label{sec:spatial-contexts}
In this section, we will construct spectra for set-based models under the ``spatiality'' assumption.
Here, we will only give the construction and will not prove that it constitutes a dual adjunction.
In \S\ref{sec:spectra}, we will prove it from a more general ``relative spectral adjunction.''
We refer the reader to \cite{Osm2021a} for topos-theoretic spectra, for arbitrary Coste contexts,
of set-based models and models in Grothendieck toposes given by explicit sites.
Subsections \ref{subsec:comparison-with-Coste-spectra} and \ref{subsec:standard-context} are devoted to
proving the equivalence of our and Coste's original constructions and 
giving criteria for a Coste context so that every model can be recovered as the model of global sections of the structure sheaf.

\subsection{Categories of Modelled Spaces} \label{subsec:introducing-modelled-spaces}
Let $ (T_0,T,\Lambda) $ be a Coste context.
We introduce categories involved in spectral adjunctions.

\begin{Def}
	A sheaf $ P \colon \mathcal{O}(X)^{\op} \to \mathbf{Set} $ on a topological space $ X $ is called a \emph{sheaf of $ \mathcal{L} $-structures on $ X $}
	when each $ PU $ is equipped with data to be an $ \mathcal{L} $-structure and each restriction map $ PU \to PV $ is a homomorphism.
	Such $ P $ is called a \emph{sheaf of $ T_0 $-models on $ X $} when each $ PU $ is a $ T_0 $-model.
	In other words, it is a functor $ P \colon \mathcal{O}(X)^{\op} \to T_0\mhyphen\mathbf{Mod} $ satisfying the sheaf condition.
	
	For sheaves $ P,Q $ of $ T_0 $-models, a morphism $ \eta \colon P \to Q $ of sheaves is called a \emph{morphism of sheaves of $ T_0 $-models} 
	when each component $ \eta_U \colon PU \to QU $ is a homomorphism.
	We will write $ T_0\mhyphen\mathbf{Mod}(\mathbf{Sh}(X)) $ for the category of sheaves of $ T_0 $-models on $ X $.
\end{Def}
A sheaf of $ T_0 $-models on $ X $ (resp.\ a morphism of sheaves of $ T_0 $-models) is the same thing 
as a $ T_0 $-model (resp.\ a homomorphism of $ T_0 $-models) in the topos $ \mathbf{Sh}(X) $ of set-valued sheaves.
For more explanation of sheaves of structures, see \cite{Ara2021}.

Given a continuous map $ f \colon X \to Y $ and a sheaf $ Q $ of $ T_0 $-models on $ Y $,
the inverse image sheaf $ f^*Q $ is canonically a sheaf of $ T_0 $-models on $ X $.
For a sheaf $ P $ of $ T_0 $-models on $ X $, we also have the direct image sheaf $ f_*P $ on $ Y $, which is again a sheaf of $ T_0 $-models on $ Y $.
These are consequences of the fact that both the inverse image functor $ f^* \colon \mathbf{Sh}(Y) \to \mathbf{Sh}(X) $ 
and the direct image functor $ f_* \colon \mathbf{Sh}(X) \to \mathbf{Sh}(Y) $ are cartesian.

\begin{Def}
	A \emph{$ T_0 $-modelled space} is a pair $ (X,P) $ of a topological space $ X $ and a sheaf $ P $ of $ T_0 $-models on $ X $.
	A \emph{morphism of $ T_0 $-modelled spaces} from $ (X,P) $ to $ (Y,Q) $
	is a pair $ (f,f^{\flat}) $ of a continuous map $ f \colon X \to Y $ and a morphism $ f^{\flat} \colon f^*Q \to P $ of sheaves of $ T_0 $-models on $ X $.
	The composite of $ (f,f^{\flat}) \colon (X,P) \to (Y,Q) $ and $ (g,g^{\flat}) \colon (Y,Q) \to (Z,R) $
	is given by the pair $ (gf , (gf)^*R \cong f^*(g^* R) \xrightarrow{f^*(g^{\flat})} f^*Q \xrightarrow{f^{\flat}} P) $.
	Let $ \modsp{T_0}{\mathbf{Sp}} $ denote the category of $ T_0 $-modelled spaces.
	We will often drop $ f^{\flat} $ in notation and simply say ``$ f \colon (X,P) \to (Y,Q) $ is a morphism in $ \modsp{T_0}{\mathbf{Sp}} $.''
\end{Def}
Let $ (X,P),(Y,Q) $ be as above. Suppose that a morphism $ f^{\flat} \colon f^*Q \to P $ in $ \mathbf{Sh}(X) $
corresponds to $ f^{\sharp} \colon Q \to f_*P $ in $ \mathbf{Sh}(Y) $ under the adjunction $ f^* \dashv f_* $.
If one of them is a morphism of sheaves of $ T_0 $-models, then so is the other.
Therefore, the adjunction $ f^* \dashv f_* $ can be extended to an adjunction between 
$ T_0\mhyphen\mathbf{Mod}(\mathbf{Sh}(X)) $ and $ T_0\mhyphen\mathbf{Mod}(\mathbf{Sh}(Y)) $.

The following lemma makes it easy to obtain a morphism of modelled spaces:
\begin{Lem}
	Let $ P,Q $ be sheaves of $ T_0 $-models on a space $ X $.
	Then, giving a morphism $ \eta \colon P \to Q $ of sheaves of $ T_0 $-models is equivalent to 
	giving a family of homomorphisms $ \{\eta_x \colon P_x \to Q_x \}_{x \in X} $ 
	between stalks with the following property:
	for every open subset $ U \subseteq X $ and $ s \in PU $, there exist an open covering $ \{U_i\}_i $ of $ U $ and sections $ t_i \in QU_i $
	such that $ (t_i)_x = \eta_x(s_x) $ for any $ i $ and $ x \in U_i $.
\end{Lem}
\begin{proof}
	Represent $ P,Q $ as \'{e}tal\'{e} spaces over $ X $, and consider the condition for $ \eta $ to be continuous:
	note that $ \eta $ is continuous iff $ \eta \circ s $ is continuous for each $ s \in PU $.
\end{proof}

When we consider the Zariski context, $ T_0 $-modelled spaces are the same as ringed spaces.
Recall that a morphism of locally ringed spaces is not just a morphism of sheaves of rings but stalkwise a local homomorphism.
A morphism $ \eta $ of sheaves of rings is stalkwise local exactly when each component $ \eta_U $ reflects invertible elements.
The following definition is a natural generalization:
\begin{Def}
	We say a morphism $ \eta \colon P \to Q $ in $ T_0\mhyphen\mathbf{Mod}(\mathbf{Sh}(X)) $ is \emph{admissible}
	if each component $ \eta_U \colon PU \to QU $ is admissible in the sense we introduced before \emph{Lemma \ref{lem:adm-map-reflect-T-models}}.
\end{Def}

It is easy to see that admissibility of morphisms is preserved by the inverse image functor 
$ f^* \colon T_0\mhyphen\mathbf{Mod}(\mathbf{Sh}(Y)) \to T_0\mhyphen\mathbf{Mod}(\mathbf{Sh}(X)) $.
For any coherent theory $ T $, the notion of $ T $-models can be generalized to those in any coherent category (in particular, any topos),
and $ f^* $ also preserves $ T $-models.
Thus, we have the following definition:
\begin{Def}
	A $ T_0 $-modelled space $ (X,P) $ is called a \emph{$ T $-modelled space} when $ P $ is a $ T $-model in $ \mathbf{Sh}(X) $ (cf.\ the lemma below).
	Let $ \modsp{\mathbb{A}}{\mathbf{Sp}} $ denote the subcategory of $ \modsp{T_0}{\mathbf{Sp}} $ consisting of $ T $-modelled spaces and
	morphisms $ f \colon (X,P) \to (Y,Q) $ with $ f^{\flat} $ admissible.
\end{Def}

We can reduce these definitions to more elementary conditions and use the latter only.
\begin{Lem} \label{lem:conditions-for-being-in-A-ModSp}
	Let $ T $ be a coherent $ \mathcal{L} $-theory, and $ P,Q $ be sheaves of $ T_0 $-models on a space $ X $.
	\begin{enumerate}
		\item $ P $ is a $ T $-model in $ \mathbf{Sh}(X) $ iff each stalk $ P_x $ is a $ T $-model.
		
		\item A morphism $ \eta \colon P \to Q $ in $ T_0\mhyphen\mathbf{Mod}(\mathbf{Sh}(X)) $ is admissible
		iff $ \eta_x \colon P_x \to Q_x $ is admissible for each point $ x \in X $.
		Consequently, if $ \eta $ is admissible and $ Q $ is a $ T $-model in $ \mathbf{Sh}(X) $, then so is $ P $.
	\end{enumerate}
\end{Lem}
\begin{proof}
	(1) See \cite[Corollary D1.2.14]{Elephant}.
	
	\noindent (2) For any cartesian formula $ \varphi(\bvec{u}) $, 
	the interpretation $ \dbracket{\varphi}_{P} $ is the sheaf on $ X $ sending $ U $ to the solution set $ \dbracket{\varphi}_{PU} \cong \Hom(M_{\varphi},PU) $.
	$ \eta $ is admissible if and only if the following diagram in $ \mathbf{Sh}(X) $ is a pullback for each $ (\varphi,\psi) \in \Lambda $:
	\[ \tikz[auto]{
		\node (UL) at (0,2) {$ \dbracket{\psi}_P $};
		\node (UR) at (2,2) {$ \dbracket{\psi}_Q $};
		\node (DL) at (0,0) {$ \dbracket{\varphi}_P $};
		\node (DR) at (2,0) {$ \dbracket{\varphi}_Q $};
		\draw[->] (UL) to node {$ \eta $} (UR);
		\draw[->] (UL) to (DL);
		\draw[->] (DL) to node {$ \eta $} (DR);
		\draw[->] (UR) to (DR);
	}.  \]
	(cf.\ the description of admissible homomorphisms before \emph{Lemma \ref{lem:adm-map-reflect-T-models}})
	By essentially the same reason as (1), this condition holds exactly when the diagram in $ \mathbf{Set} $
	\[ \tikz[auto]{
		\node (UL) at (0,2) {$ \dbracket{\psi}_{P_x} $};
		\node (UR) at (2,2) {$ \dbracket{\psi}_{Q_x} $};
		\node (DL) at (0,0) {$ \dbracket{\varphi}_{P_x} $};
		\node (DR) at (2,0) {$ \dbracket{\varphi}_{Q_x} $};
		\draw[->] (UL) to node {$ \eta_x $} (UR);
		\draw[->] (UL) to (DL);
		\draw[->] (DL) to node {$ \eta_x $} (DR);
		\draw[->] (UR) to (DR);
	}  \]
	is a pullback for each $ x \in X $ and $ (\varphi,\psi) \in \Lambda $.
	This means that each $ \eta_x $ is admissible.
	The last assertion immediately follows from (1) and \emph{Lemma \ref{lem:adm-map-reflect-T-models}}.
\end{proof}

\begin{Rmk}
	Though $ T_0\mhyphen\mathbf{Mod}(\mathbf{Sh}(X)) $ is not necessarily locally finitely presentable,
	the factorization theorem can be generalized to morphisms in $ T_0\mhyphen\mathbf{Mod}(\mathbf{Sh}(X)) $,
	and such factorizations are stable under inverse image functors.
	This is the key feature of Coste's framework, which enables us to apply the $ 2 $-categorical construction of topos-theoretic spectra by Cole \cite{Cole2016} (cf.\ \cite[pp.~205--209]{JohTT}).
\end{Rmk}

Taking global sections yields the functor $ \Gamma \colon \modsp{T_0}{\mathbf{Sp}} \to T_0\mhyphen\mathbf{Mod}^{\op} $ sending
\[ f \colon (X,P) \to (Y,Q) \qquad \mapsto \qquad (f^{\sharp})_Y \colon QY \to PX. \]
We also write $ \Gamma \colon \modsp{\mathbb{A}}{\mathbf{Sp}} \to T_0\mhyphen\mathbf{Mod}^{\op} $ for the restriction of the above functor.
Our first goal in this section is to construct the remaining part of the adjunction
\[ \tikz{
	\node (L) at (0,0) {$ \modsp{\mathbb{A}}{\mathbf{Sp}} $};
	\node (R) at (4,0) {$ T_0\mhyphen\mathbf{Mod}^{\op} $};
	\draw[bend left=15pt,->] (L) to node[above] {$ \Gamma $} node[midway,name=U] {} (R);
	\draw[bend left=15pt,->] (R) to node[below] {$ \Spec $} node[midway,name=D] {} (L);
	\path (D) to node[sloped] {$ \vdash $} (U);
}.  \]
For a $ T_0 $-model $ A $, $ \Spec(A) $ will be called the \emph{spectrum of $ A $}.
We cannot obtain such an adjunction for arbitrary Coste contexts, but it suffices to impose the following condition:
\begin{Def}[{\cite[4.4.1]{Coste1979}}]
	A Coste context $ (T_0,T,\Lambda) $ is \emph{spatial} if each $ (\varphi(\bvec{u}),\psi(\bvec{u},\bvec{v})) \in \Lambda $ satisfies
	\[ T_0 \models \psi(\bvec{u},\bvec{v}) \land \psi(\bvec{u},\bvec{v'}) \vdash \bvec{v}=\bvec{v'}. \]
	In other words, the morphism $ \{\bvec{uv}.\,\psi \} \to \{\bvec{u}.\,\varphi \} $ in $ \mathcal{C}_{T_0} $ is monic (cf.\ \emph{Proposition \ref{prop:generalities-on-C_T}}),
	or, equivalently, the corresponding $ M_{\varphi} \to M_{\psi} $ in $ \mathcal{V} $ is epic.
\end{Def}

\begin{Expl}
	All the examples of Coste contexts we gave in the previous section are spatial.
	An intended omission there is the real \'{e}tale spectrum of a ring (\cite{CosteRoy1979b}, \cite{CosteRoy1980a}).
	It can be considered as a generic \textit{separably real closed local ring}.
	Several coherent axiomatizations of separably real closed local rings are proposed (\cite{Kock1979a}, \cite{JoyRey1986}).
	Strictly speaking, these theories seemingly do not fall into our setting since these axiomatizations are not in the form of \emph{Definition \ref{def:Coste-context}}.
	Nevertheless, after rephrasing Coste contexts categorically as in \cite[Definition 2.0.2]{Osm2021a}, we can argue real \'{e}tale spectra as in \cite{CosteRoy1979b},
	while spatiality of real \'{e}tale spectra is non-trivial \cite{CosteRoy1980a}.
	The same comments apply to the \'{e}tale spectra of (local) rings by \cite{HakimThesis} (cf.\ \cite{Wra1976}),
	which are generic \textit{separably closed local rings}, although in this case the involved Coste context is not spatial.
\end{Expl}

In the next subsection, we will provide a concrete description of Coste's construction of spectra in the spatial case,
while we will not prove that $ (\Gamma,\Spec) $ constitutes an adjunction.
The proof is postponed to \S\ref{sec:spectra}, where we will extend the spectrum construction to obtain an adjunction
\[ \tikz{
	\node (L) at (0,0) {$ \modsp{\mathbb{A}}{\mathbf{Sp}} $};
	\node (R) at (4,0) {$ \modsp{T_0}{\mathbf{Sp}} $};
	\draw[bend left=15pt,->] (L) to node[above] {forgetful} node[midway,name=U] {} (R);
	\draw[bend left=15pt,->] (R) to node[below] {$ \Spec $} node[midway,name=D] {} (L);
	\path (D) to node[sloped] {$ \vdash $} (U);
}.  \]

\subsection{Spectra of $ T_0 $-Models in $ \mathbf{Set} $} \label{subsec:spectra-of-models}
We fix a spatial Coste context $ (T_0,T,\Lambda) $ and a $ T_0 $-model $ A $.
Let us proceed to the construction of the spectrum $ \Spec(A) $ of $ A $.
Observe that every morphism in $ \mathcal{V} $ is epic, for $ \mathcal{V} $ is generated by epimorphisms $ M_{\varphi} \twoheadrightarrow M_{\psi} $
and the completion process in \emph{Proposition \ref{prop:admissibility-is-invariant}} adds epimorphisms only.
Hence, any $ \lambda \in \mathcal{V}_A $ is epic too, and $ \mathcal{V}_A $ is categorically equivalent to a small preorder.
We showed in \emph{Proposition \ref{prop:V_A-is-cocartesian}} that $ \mathcal{V}_A $ is cocartesian and thus obtain the following:

\begin{Prop*}
	$ \mathcal{V}_A $ is categorically equivalent to a join-semilattice.
\end{Prop*}

Hereafter, we will always regard it as a join-semilattice. Taking pushouts in $ T_0\mhyphen\mathbf{Mod} $ yields the join-operation.
An ideal of a join-semilattice is a non-empty lower subset closed under binary joins.
The poset $ \Idl(\mathcal{V}_A) $ of ideals of $ \mathcal{V}_A $ is an algebraic lattice, and the subsets
\[ D_{\lambda} := \Set{I \in \Idl(\mathcal{V}_A) \smid \lambda \in I } \qquad \text{(for each $ \lambda \in \mathcal{V}_A $)} \]
form a basis of a topology which makes $ \Idl(\mathcal{V}_A) $ a topological meet-semilattice, i.e.\ the meet-operation is continuous (cf.\ \cite[Corollary VI.3.6]{JohSS}).

For each ideal $ I \in \Idl(\mathcal{V}_A)$, let $ A_I $ denote the colimit
\[ \rlim_{\lambda \in I} A_{\lambda} \]
in $ T_0\mhyphen\mathbf{Mod} $.
Since $ I $ is directed, the composite of $ A \xrightarrow{\lambda} A_{\lambda} \to A_I $ does not depend on the choice of $ \lambda $.
Here is the definition of the underlying space of $ \Spec(A) $:

\begin{Def} \label{def:Spec(A)}
	$ \Spec(A) $ is the subspace of $ \Idl(\mathcal{V}_A) $ which consists of the ideals $ I $ with $ A_I $ a $ T $-model.
\end{Def}
We will also write $ D_{\lambda} $ for the basic open set $ \Set{I \in \Spec(A) \smid \lambda \in I } $ if no confusion arises.

Let us give necessary and sufficient conditions for $ A_I $ to be a $ T $-model.
In the statement and proof below, let $ k_i \colon M_{\varphi} \to M_{\psi_i}$ be the morphism corresponding to $ (\varphi,\psi_i) \in \Lambda $.

\begin{Prop} \label{prop:char-of-ideals-in-Spec}
	For an ideal $ I $ of $ \mathcal{V}_A $, the following are equivalent:
	\begin{enumerate}[label=(\roman*)]
		\item $ A_I $ is a $ T $-model.
		
		\item For each additional axiom $ \varphi(\bvec{u}) \vdash \bigvee_i \exists \bvec{v}_i \psi_i(\bvec{u},\bvec{v}_i) \in T $,
		\[ \forall \lambda \in I,\, \forall \bvec{a} \in A_{\lambda},\, \left [ 
		A_{\lambda} \models \varphi(\bvec{a}) \implies \exists \mu \in I,\, \exists i,\, \mu \geq \lambda \;\text{and}\; 
		A_{\mu} \models \exists \bvec{v}_i \psi_i(\bvec{a'},\bvec{v}_i) \right ], \]
		where $ \bvec{a'} \in A_{\mu} $ is the image of $ \bvec{a} $ under the morphism $ A_{\lambda} \to A_{\mu} $.
		
		\item For each additional axiom $ \varphi(\bvec{u}) \vdash \bigvee_i \exists \bvec{v}_i \psi_i(\bvec{u},\bvec{v}_i) \in T $, 
		$ \lambda \in I $ and $ \bvec{a} \colon M_{\varphi} \to A_{\lambda} $,
		if we write $ \mu_i \in \mathcal{V}_A $ for the composite of $ \lambda $ and the pushout of $k_i $ along $ \bvec{a} $,
		then at least one of $ \mu_i $'s belongs to $ I $.
	\end{enumerate}
\end{Prop}
\begin{proof}
	(i)$ \Leftrightarrow $(ii):
	For any cartesian formula $ \varphi(\bvec{u}) $ and $ \bvec{a} \in A_I $ satisfying $ \varphi $, 
	the corresponding morphism $ \bvec{a} \colon M_{\varphi} \to A_I $ factors through some $ A_{\lambda} \to A_I $ by finite presentability, and we can see
	\[ A_I \models \varphi(\bvec{a}) \iff \exists \lambda \in I,\, \exists \bvec{a'} \in A_{\lambda} ,\, A_{\lambda} \models \varphi(\bvec{a'}) \;\text{and $ A_{\lambda} \to A_I $ sends $ \bvec{a'} $ to $ \bvec{a} $.} \]
	Let $ \varphi \vdash \bigvee_i \exists \bvec{v}_i \psi_i(\bvec{v}_i) $ be an axiom of $ T $. Then,
	\[ A_I \models \varphi \vdash \bigvee_i \exists \bvec{v}_i \psi_i(\bvec{v}_i) \iff \text{every $ \bvec{a} \colon M_{\varphi} \to A_I $ factors through some $ k_i $.} \]
	An argument as above shows that the latter condition is equivalent to the condition
	\begin{multline*}
		\forall \lambda \in I,\, \forall \bvec{a} \colon M_{\varphi} \to A_{\lambda},\, \exists \mu \in I,\, \exists i,\, \\
		\mu \geq \lambda \;\text{and the composite of}\; M_{\varphi} \xrightarrow{\bvec{a}} A_{\lambda} \to A_{\mu} \;\text{factors through $ k_i $}.
	\end{multline*}
	This is the same thing as (ii).
	
	\noindent (ii)$ \Rightarrow $(iii):
	Suppose $ I $ satisfies (ii).  For $ \varphi \vdash \bigvee_i \exists \bvec{v}_i \psi_i \in T $, $ \lambda \in I $ and $ \bvec{a} \colon M_{\varphi} \to A_{\lambda} $,
	there exists a commutative diagram
	\[ \tikz[auto]{
		\node (UL) at (0,2) {$ M_{\varphi} $};
		\node (UR) at (2,2) {$ M_{\psi_i} $};
		\node (DL) at (0,0) {$ A_{\lambda} $};
		\node (DR) at (2,0) {$ A_{\lambda'} $};
		\draw[->] (UL) to node {$ k_i $} (UR);
		\draw[->] (UL) to node {$ \bvec{a} $} (DL);
		\draw[->] (DL) to (DR);
		\draw[->] (UR) to (DR);
	},  \]
	for some $ i $ and $ \lambda' \in I $ with $ \lambda' \geq \lambda $.
	By the construction of $ \mu_i $, we have $ \lambda \leq \mu_i \leq \lambda' $, and hence $ \mu_i \in I $.
	
	\noindent (iii)$ \Rightarrow $(ii): Straightforward.
\end{proof}

Notice that the condition (iii) does not imply that the largest ideal $ \mathcal{V}_A $ is in $ \Spec(A) $.
This is because an axiom of $ T $ can be of the form $ \varphi(\bvec{u}) \vdash \bot $, where $ \bot $ is the empty disjunction.
For such an axiom, if there exist $ \lambda $ and a morphism $ M_{\varphi} \to A_{\lambda} $, then $ \mathcal{V}_A \notin \Spec(A) $.
We also remark that the singleton set $ \{\id_A\} $ is in $ \Spec(A) $ if $ A $ itself is a $ T $-model.

\begin{Expl} \label{expl:V_A-and-ideals-in-examples}
	We describe $ \mathcal{V}_A $ and ideals in $ \Spec(A) $ for the Zariski context and the DL context (\emph{Examples \ref{expl:Zariski-context}, \ref{expl:DL-context}}).
	
	\noindent\textbf{Zariski context:}
	In this case, $ \modsp{T_0}{\mathbf{Sp}} $ is the category $ \mathbf{RS} $ of ringed spaces,
	and $ \modsp{\mathbb{A}}{\mathbf{Sp}} $ is the category $ \mathbf{LRS} $ of locally ringed spaces.
	The class $ \mathcal{V} $ is generated by the homomorphism $ \mathbb{Z}[X] \to \mathbb{Z}[X,X^{-1}] $.
	Let $ A $ be a ring. Then $ \mathcal{V}_A $ is (equivalent to) the poset of localizations $ A \to A[a^{-1}] $ for $ a \in A $.
	In other words, $ \mathcal{V}_A $ is the poset reflection of the preorder $ (A,\leq) $, where we define
	\[ a \leq b \iff b \in \sqrt{\langle a \rangle}, \quad \text{i.e.} \quad \exists n \in \mathbb{N},\, \exists c \in A,\, b^n = ac. \]
	Observe that $ a \leq b $ iff there exists a morphism $ A[a^{-1}] \to A[b^{-1}] $ under $ A $.
	An ideal $ S $ of $ \mathcal{V}_A $ is nothing other than a \textit{saturated} multiplicative set of $ A $,
	i.e.\ $ 1 \in S $ and $  \forall a,b \in A,\, ab \in S \iff a \in S $ and $ b \in S $.
	It belongs to $ \Spec(A) $ exactly when it is prime, i.e.\ $ 0 \notin S $ and $ a+b \in S $ implies either $ a \in S $ or $ b \in S $.
	Thus, a saturated prime multiplicative set is the complement of some prime (ring) ideal of $ A $,
	and $ \Spec(A) $ is homeomorphic to $ \Spec_Z(A) $.
	The basic open set $ D_a = \Set{S \in \Spec(A) \smid a \in S} $ is identified with the subset $ \Set{\mathfrak{p} \in \Spec_Z(A) \smid a \notin \mathfrak{p}} $.
	
	Saturated multiplicative sets have appeared in \cite{Tier1976a}.
	There, Tierney constructed Zariski spectra of ringed toposes by considering the internalization of the opposite of the above join-semilattice.
	This is compared with Joyal's method involving a distributive lattice, sometimes called a \textit{reticulation} of a ring, which we will describe below (for an arbitrary spatial Coste context).
	For the constructive perspectives of these constructions, see \cite[\S12.9]{BlechThesis} and the references listed in \cite[p.~222]{JohSS}.
	
	\noindent\textbf{DL context:}
	Let $ M $ be the distributive lattice corresponding to the formula $ \sup(u,u')=1 $, i.e.\ 
	\[ \tikz[auto]{
		\node (1) at (0,3) {$ 1 $};
		\node (a) at (-1,2) {$ a $};
		\node (b) at (1,2) {$ b $};
		\node (ab) at (0,1) {$ \inf(a,b) $};
		\node (0) at (0,0) {$ 0 $};
		\draw (a) to (1);
		\draw (b) to (1);
		\draw (ab) to (a);
		\draw (ab) to (b);
		\draw (0) to (ab);
	}. \]
	The class $ \mathcal{V} $ is generated by the obvious homomorphism $ M \to \{0 \lneq a \lneq b=1\} $.
	Let $ L $ be a distributive lattice. Then $ \mathcal{V}_L $ is (equivalent to) the poset of the quotients $ L \to L/\langle a \rangle $ for $ a \in L $,
	where $ \langle a \rangle $ is the principal filter generated by $ a $.
	Then there exists a morphism $ L/\langle a \rangle \to L/\langle b \rangle $ under $ L $ exactly when $ b \leq a $ in the order of $ L $.
	Thus, $ \mathcal{V}_L $ is (equivalent to) $ L^{\op} $, and an ideal of $ \mathcal{V}_L $ is a filter of $ L $.
	Under this identification, a filter of $ L $ belongs to $ \Spec(L) $ iff it is prime,
	and $ \Spec(L) $ is homeomorphic to the prime filter spectrum $ \Spec_D(L) $.
	The basic open set $ D_a = \Set{I \in \Spec(L) \smid a \in I} $ is identified with the subset $ D(a)=\Set{\mathfrak{p} \in \Spec_D(L) \smid a \in \mathfrak{p}} $.
	We adopted the convention for $ \Spec_D(L) $ so that it becomes a spectrum for the DL context,
	while the authors of \cite{DSTspectral} define the prime filter spectrum as the \textit{inverse space} of our $ \Spec_D(L) $.
	We can also regard the latter spectrum as a spectrum for the ``coDL context'' by replacing $ \sup $ with $ \inf $ and $ 1 $ with $ 0 $ in \emph{Example \ref{expl:DL-context}}.
\end{Expl}

Conversely, all spectra can be described as prime filter spectra of distributive lattices.
A topological space $ X $ is \emph{spectral} (a.k.a.\ ``coherent'' in the terminology of \cite[{\S}II.3]{JohSS})
if it is a compact (not necessarily Hausdorff) sober space, and its compact open subsets constitute a basis of $ X $ closed under finite intersections.
We will use the facts that the prime filter spectrum $ \Spec_D(L) $ of a distributive lattice $ L $ is spectral
and that any homomorphism of distributive lattices induces a spectral map 
(i.e.\ a continuous map whose inverse image map preserves compact open sets) between their spectra.
The category of spectral spaces and spectral maps is dually equivalent to the category of distributive lattices.
This duality itself is often referred to as Stone duality.
For an extensive exposition of spectral spaces, see \cite{DSTspectral}.
Coste \cite[4.4.3]{Coste1979} mentioned the following fact without proof:

\begin{Prop} \label{prop:spec(A)-is-spectral}
	$ \Spec(A) $ is a spectral space.
\end{Prop}
\begin{proof}
	We would like to find a distributive lattice $ L_A $ such that $ \Spec(A) $ is homeomorphic to $ \Spec_D(L_A) $.
	First, we describe $ \Idl(\mathcal{V}_A) $ as a spectral space.
	Let $ \mathbf{DLat} $ be the category of distributive lattices, and $ \mathrm{j}\mhyphen\mathbf{SLat} $ the category of join-semillatices.
	The forgetful functor from $ \mathbf{DLat} $ to $ \mathrm{j}\mhyphen\mathbf{SLat} $ has a left adjoint sending a join-semilattice $ S $
	to the distributive lattice 
	\[ \tilde{S} := \Set{R \subseteq S \smid \text{$ R $ is finite, and any two distinct elements are incomparable}} \]
	(cf.\ \cite[Lemma I.4.8]{JohSS}),
	where its element $ R = \{s_1,\dots,s_n \} $ is considered as ``a formal meet $ \bigwedge_i s_i $.''
	Then, join-preserving maps $ S \to 2 $ are in bijection with lattice homomorphisms $ \tilde{S} \to 2 $,
	and this correspondence yields a bijection between ideals of $ S $ and prime ideals of $ \tilde{S} $.
	When we put $ S=\mathcal{V}_A $, being careful about the definition of the topology on $ \Idl(\mathcal{V}_A) $,
	we can see that $ \Idl(\mathcal{V}_A) $ and $ \Spec_D(\widetilde{\mathcal{V}_A}^{\op}) $ are homeomorphic.
	
	If $ \Spec(A) $ is a spectral subspace of $ \Idl(\mathcal{V}_A) \cong \Spec_D(\widetilde{\mathcal{V}_A}^{\op}) $,
	it should be identified with $ \Spec_D(L_A) $ for some quotient lattice $ L_A $ of $ \widetilde{\mathcal{V}_A}^{\op} $.
	To obtain such an $ L_A $, we only have to take the quotient of $ \widetilde{\mathcal{V}_A}^{\op} $ by the relations
	\[ \lambda = \bigwedge_i \mu_i \qquad \text{(considered in $ \widetilde{\mathcal{V}_A} $)} \]
	for any $ \lambda \in \mathcal{V}_A $ and $ \mu_i $ as in the condition (iii) of \emph{Proposition \ref{prop:char-of-ideals-in-Spec}}.
\end{proof}

Next, we describe the structure sheaf $ S $ of $ \Spec(A) $.
As opposed to Coste's construction,
we define the structure sheaf such that it directly generalizes the usual construction of Zariski spectra (cf.\ \emph{Lemma \ref{lem:comparison-of-str-sheaves}}).

\begin{Def} \label{def:local-sections-of-str-sheaf}
	Let $ U $ be an open subset of $ \Spec(A) $.
	We define the sheaf $ S $ of $ T_0 $-models by
	\[ SU := \Set{s \in \prod_{I \in U} A_I \smid 
		\begin{aligned}
			& \forall J \in U,\, \exists \lambda \in J \;\text{(i.e.\ $ J \in D_{\lambda} $)},\, \exists a \in A_{\lambda},\,  \\
			& D_{\lambda} \subseteq U,\;\text{and}\;\forall I \in D_{\lambda},\,s_I=a_I.
		\end{aligned}
	}, \]
	where $ a_I $ is the image of $ a $ under $ A_{\lambda} \to A_I $.
\end{Def}
In particular, for $ \lambda \in \mathcal{V}_A $, we have a canonical homomorphism $ A_{\lambda} \to S(D_{\lambda}) $,
which is not necessarily an isomorphism (cf.\ \emph{Corollary \ref{cor:representation-as-local-sections}}).

\begin{Lem} \label{lem:stalk-of-structure-sheaf}
	The stalk $ S_I $ at an ideal $ I \in \Spec(A) $ is isomorphic to $ A_I $.
	In particular, $ (\Spec(A),S) $ is a $ T $-modelled space by \emph{Lemma \ref{lem:conditions-for-being-in-A-ModSp}}.
\end{Lem}
\begin{proof}
	There exists the canonical homomorphism $ S_I \to A_I $ sending the germ of $ s $ at $ I $ to $ s_I $.
	We show that this is a surjective embedding.
	
	Since $ A_I $ is the filtered colimit $ \rlim_{\lambda \in I} A_{\lambda} $,
	any element of $ A_I $ is of the form $ a_I $ for some $ \lambda \in I $ and $ a \in A_{\lambda} $.
	For such $ \lambda $ and $ I $, we have a commutative diagram
	\[ \tikz[auto]{
		\node (UL) at (0,1.5) {$ A_{\lambda} $};
		\node (UC) at (2,1.5) {$ S(D_{\lambda}) $};
		\node (UR) at (4,1.5) {$ S_I $};
		\node (DR) at (4,0) {$ A_I $};
		\draw[bend right=10pt,->] (UL) to (DR);
		\draw[->] (UL) to (UC);
		\draw[->] (UC) to (UR);
		\draw[->] (UC) to (DR);
		\draw[->] (UR) to (DR);
	}.  \]
	Thus, $ S_I \to A_I $ is surjective.
	
	To show that this is an embedding, suppose that $ A_I \models \rho(s_I, s'_I) $ for $ s,s' \in SU $, $ I \in U $, and a binary relation symbol $ \rho $.
	We would like to find a basic open $ D_{\mu} $ such that $ I \in D_{\mu} \subseteq U $ and $ S(D_{\mu}) \models \rho(s|_{D_{\mu}},s'|_{D_{\mu}}) $.
	By the definition of $ SU $, we can find $ \lambda, \lambda' \in I $ and $ a \in A_{\lambda}, a' \in A_{\lambda'} $ such that
	$ D_{\lambda} \cup D_{\lambda'} \subseteq U $, $ \forall J \in D_{\lambda},\, s_J = a_J $, and $ \forall J \in D_{\lambda'},\, s'_J = a'_J $.
	Replacing $ \lambda,\lambda' $ by $ \lambda \vee \lambda' $, we may assume that $ \lambda=\lambda' $.
	Since $ A_I \models \rho(a_I, a'_I) $, there exists $ \mu \in I $ such that $ \mu \geq \lambda $ and $ A_{\mu} \models \rho(\bar{a},\bar{a}') $,
	where $ \bar{a},\bar{a}' $ are respectively the images of $ a,a' $ under $ A_{\lambda} \to A_{\mu} $.
	Then, for any $ J \in D_{\mu} $, $ s_J = \bar{a}_J $, $ s'_J = \bar{a}'_J $, and $ A_J \models \rho(s_J,s'_J) $.
	Thus, we have the desired basic open set $ D_{\mu} $.
	The homomorphism $ S_I \to A_I $ also reflects the interpretations of the other relation symbols and the equality $ = $, and hence it is an embedding.
\end{proof}

We are now ready to show that this construction defines a functor 
\[ \Spec \colon T_0\mhyphen\mathbf{Mod}^{\op} \to \modsp{\mathbb{A}}{\mathbf{Sp}}. \]

\begin{Prop} \label{prop:hom-induces-morph-bw-Spec}
	Any homomorphism $ \alpha \colon A \to B $ induces a morphism $ \tilde{\alpha} \colon \Spec(B) \to \Spec(A) $ in $ \modsp{\mathbb{A}}{\mathbf{Sp}} $.
\end{Prop}
\begin{proof}
	The underlying map of $ \tilde{\alpha} $ is the restriction of the spectral map $ \Idl(\mathcal{V}_B) \to \Idl(\mathcal{V}_A) $
	which is induced by the join-preserving map $ \alpha_* \colon \mathcal{V}_A \to \mathcal{V}_B $ taking pushouts along $ \alpha $.
	Here, using \emph{Proposition \ref{prop:char-of-ideals-in-Spec}}, we can easily see that an ideal $ J \in \Spec(B) $ is sent to the ideal
	\[ \tilde{\alpha}(J) = \Set{\lambda \in \mathcal{V}_A \smid \alpha_*(\lambda) \in J}, \]
	which belongs to $ \Spec(A) $.
	Since $ \alpha_* $ yields a lattice homomorphism $ L_A \to L_B $, and $ \tilde{\alpha} $ can be identified with the induced map $ \Spec_D(L_B) \to \Spec_D(L_A) $,
	we can also see $ \tilde{\alpha} $ is a spectral map.
	
	Let $ S,R $ be the structure sheaves of $ \Spec(A),\Spec(B) $, respectively.
	The sheaf morphism $ \tilde{\alpha}^{\flat} \colon \tilde{\alpha}^*S \to R$ is induced by the homomorphisms between stalks
	\[ (\tilde{\alpha}^*S)_J \cong S_{\tilde{\alpha}(J)} \cong A_{\tilde{\alpha}(J)} \to B_J \cong R_J, \]
	which make the following diagram commutative for each $ \lambda \in \tilde{\alpha}(J) $:
	\[ \tikz[auto]{
		\node (UL) at (0,2) {$ A $};
		\node (UC) at (2,2) {$ A_{\lambda} $};
		\node (UR) at (4,2) {$ A_{\tilde{\alpha}(J)} $};
		\node (DL) at (0,0) {$ B $};
		\node (DC) at (2,0) {$ B_{\alpha_*(\lambda)} $};
		\node (DR) at (4,0) {$ B_J $};
		\draw[->] (UL) to node {$ \alpha $} (DL);
		\draw[->] (UL) to node {$ \lambda $} (UC);
		\draw[->] (DL) to node[swap] {$ \alpha_*(\lambda) $} (DC);
		\draw[->] (UC) to (DC);
		\draw[->] (UC) to (UR);
		\draw[->] (DC) to (DR);
		\draw[dashed,->] (UR) to (DR);
		\node (po) at (1.5,0.5) {$ \ulcorner $};
	}.  \]
	
	Moreover, this map $ A_{\tilde{\alpha}(J)} \to B_J $ is admissible.
	To show this, suppose that we have a commutative diagram
	\[ \tikz[auto]{
		\node (UL) at (0,2) {$ M $};
		\node (UR) at (2,2) {$ A_{\tilde{\alpha}(J)} $};
		\node (DL) at (0,0) {$ N $};
		\node (DR) at (2,0) {$ B_J $};
		\draw[->] (UL) to node {$ m $} (UR);
		\draw[->] (UL) to node {$ k $} (DL);
		\draw[->] (DL) to node {$ n $} (DR);
		\draw[->] (UR) to (DR);
	}  \]
	with $ k \in \mathcal{V} $.
	Then $ m $ factors through $ A_{\lambda} \to A_{\tilde{\alpha}(J)} $ for some $ \lambda \in \tilde{\alpha}(J) $.
	We have yet another $ \mu \in \mathcal{V}_A $ such that the above diagram is factored as
	\[ \tikz[auto]{
		\node (UL) at (0,2) {$ M $};
		\node (UC) at (2,2) {$ A_{\lambda} $};
		\node (UR) at (4,2) {$ A_{\tilde{\alpha}(J)} $};
		\node (DL) at (0,0) {$ N $};
		\node (DC) at (2,0) {$ A_{\mu} $};
		\node (DR) at (4,0) {$ B_J $};
		\draw[->] (UL) to node {$ k $} (DL);
		\draw[->] (UL) to (UC);
		\draw[->] (DL) to (DC);
		\draw[->] (UC) to (DC);
		\draw[->] (UC) to (UR);
		\draw[->] (DC) to (DR);
		\draw[->] (UR) to (DR);
		\node (po) at (1.5,0.5) {$ \ulcorner $};
	}.  \]
	We would like to show $ \mu \in \tilde{\alpha}(J) $ to obtain the desired filler $ N \to A_{\mu} \to A_{\tilde{\alpha}(J)} $.
	The square
	\[ \tikz[auto]{
		\node (UL) at (0,2) {$ M $};
		\node (UC) at (2,2) {$ A_{\lambda} $};
		\node (UR) at (4,2) {$ B_{\alpha_*(\lambda)} $};
		\node (DL) at (0,0) {$ N $};
		\node (DR) at (4,0) {$ B_J $};
		\draw[->] (UL) to node {$ k $} (DL);
		\draw[->] (UL) to (UC);
		\draw[->] (DL) to node {$ n $} (DR);
		\draw[->] (UC) to (UR);
		\draw[->] (UR) to (DR);
	}  \]
	commutes because the composites of
	$ A_{\lambda} \to B_{\alpha_*(\lambda)} \to B_{J} $ and $ A_{\lambda} \to A_{\tilde{\alpha}(J)} \to B_{J} $ coincide.
	By applying \emph{Lemma \ref{lem:useful-lemma}} to this square,
	we obtain the following commutative diagram for some $ \theta \in J $ with $ \alpha_*(\lambda) \leq \theta $:
	\[ \tikz[auto]{
		\node (UL) at (0,2) {$ A_{\lambda} $};
		\node (UR) at (2,2) {$ B_{\alpha_*(\lambda)} $};
		\node (DL) at (0,0) {$ A_{\mu} $};
		\node (DR) at (2,0) {$ B_{\alpha_*(\mu)} $};
		\node (Out) at (3.5,-1.5) {$ B_{\theta} $};
		\draw[->] (UL) to (DL);
		\draw[->] (UL) to (UR);
		\draw[->] (DL) to (DR);
		\draw[->] (UR) to (DR);
		\draw[bend left=15pt,->] (UR) to (Out);
		\draw[bend right=15pt,->] (DL) to (Out);
		\draw[dashed,->] (DR) to node {$ \exists! $} (Out);
		\node (po) at (1.5,0.5) {$ \ulcorner $};
	}.  \]
	This implies that $ \alpha_*(\mu) \leq \theta $ and, hence, $ \mu \in \tilde{\alpha}(J) $.
\end{proof}

The factorization theorem for the spatial case is much simpler:
if $ \alpha \colon A \to B $ is a homomorphism with $ B \models T $,
we have an ideal $ \tilde{\alpha}(\{\id_B\}) \in \Spec(A) $ and a factorization of $ \alpha $
\[ \tikz[auto]{
	\node (L) at (0,1.5) {$ A $};
	\node (C) at (2,0) {$ A_{\tilde{\alpha}(\{\id_B\})} $};
	\node (R) at (4,1.5) {$ B $};
	\draw[->] (L) to node {$ \alpha $} (R);
	\draw[->] (L) to (C);
	\draw[->] (C) to (R);
}. \]

\subsection{Comparison with Coste's Original Construction} \label{subsec:comparison-with-Coste-spectra}
The proofs of the results in the present and next subsections demand more knowledge of topos theory and first-order categorical logic.
We will not use those results after \S\ref{sec:colimits}, and the reader unfamiliar with these subjects can safely skip them.

We will continue to work with a fixed spatial Coste context $ (T_0,T,\Lambda) $ and a $ T_0 $-model $ A $.
Our construction of $ \Spec(A) $ differs from Coste's in two respects.
In this subsection, we will show the equivalence of these constructions.

The first difference is the construction of the underlying spaces (or the underlying toposes).
Coste \cite[4.4.1]{Coste1979} defines the underlying space as
\[ \Spec(A) := \Set{ \alpha \colon A \to B \smid \alpha \text{ is a localization in the sense after \emph{Lemma \ref{lem:adm-map-reflect-T-models}}.}}\!/{\cong}, \]
where $ \cong $ denotes isomorphisms of localizations in $ A \downarrow T_0\mhyphen\mathbf{Mod} $.
Its topology is generated by the basic open sets
\[ D_{\lambda} = \Set{\alpha \in \Spec(A) \smid \alpha \text{ factors through } \lambda} \]
for $ \lambda \in \mathcal{V}_A $.
This space is homeomorphic to our $ \Spec(A) $ since every localization is of the form $ A \to A_I $ and
two ideals $ I,I' $ giving the same localization $ A_I \cong A_{I'} $ turn out to be the same.

Coste also asserts that the sheaf topos $ \mathbf{Sh}(\Spec(A)) $ is categorically equivalent to
the topos $ \mathbf{Sh}(\mathcal{V}_A^{\op},C) $ of sheaves on the site $ (\mathcal{V}_A^{\op},C) $,
where $ C $ is the coverage on the meet-semilattice $ \mathcal{V}_A^{\op} $ we will describe below.
For each $ \lambda \in \mathcal{V}_A $, $ \varphi \vdash \bigvee_i \exists \bvec{v}_i \psi_i \in T $, and $ \bvec{a} \colon M_{\varphi}  \to A_{\lambda} $,
let us consider the family $ \{ \lambda \to \mu_i \}_i $ defined in the condition (iii) of \emph{Proposition \ref{prop:char-of-ideals-in-Spec}}.
Let $ C(\lambda) $ denote the set of $ \{\id_{\lambda}\} $ and such families:
\[ C(\lambda) := \left \{ \{\id_{\lambda}\} \right \} \cup \Set{\{ \lambda \to \mu_i \}_i \quad \smid \quad 
	\varphi \vdash \bigvee_i \exists \bvec{v}_i \psi_i \in T,\quad \bvec{a} \colon M_{\varphi}  \to A_{\lambda} }. \]
If $ \{ \lambda \to \mu_i \}_i \in C(\lambda) $ and $ \nu \geq \lambda $, then $ \{ \nu \to \mu_i \vee \nu \}_i \in C(\nu)$. 
Therefore, we have a coverage $ C $ on $ \mathcal{V}_A^{\op} $ in the sense of \cite[{\P}II.2.11]{JohSS}\footnote{
	A similar construction of a coverage on $ \mathcal{V}_A^{\op} $ works for a non-spatial Coste context,
	and Coste used such a site to give a spectrum of a model as a modelled topos \cite[\S4.1]{Coste1979}.}.
A sheaf on the site $ (\mathcal{V}_A^{\op},C) $ is a functor $ F \colon \mathcal{V}_A \to \mathbf{Set} $ such that, 
for each covering $ \{ \lambda \to \mu_i \}_i \in C(\lambda) $ and a family $ \{ a_i \in F(\mu_i) \}_i $ 
satisfying $ F(\mu_i \to \mu_i \vee \mu_j)(a_i) = F(\mu_j \to \mu_i \vee \mu_j)(a_j) $ as elements of $ F(\mu_i \vee \mu_j) $,
there exists a unique $ a \in F(\lambda) $ with $ a_i = F(\lambda \to \mu_i)(a) $ for all $ i $.

To obtain the desired equivalence between sheaf toposes,
we use the canonical cartesian functor (a meet-semilattice homomorphism)
\[ D_{(-)} \colon \mathcal{V}_A^{\op} \ni \lambda \mapsto D_{\lambda} \in \mathcal{O}(\Spec(A)). \]
By the characterization of ideals in $ \Spec(A) $ (\emph{Proposition \ref{prop:char-of-ideals-in-Spec}}),
this functor sends a covering $ \{ \lambda \to \mu_i \}_i \in C(\lambda) $ to an open covering $ D_{\lambda} = \bigcup_i D_{\mu_i} $.
Thus, it is a morphism of sites.
Once we show the following lemma\footnote{
	Since $ D_{(-)} $ is not necessarily full (cf.\ the discussion at the end of this subsection), we need to use a generalized comparison lemma.},
by an analogous proof of the comparison lemma \cite[Theorem C2.2.3]{Elephant}, we have an adjoint equivalence
\[ \tikz{
	\node (L) at (0,0) {$ \mathbf{Sh}(\Spec(A)) $};
	\node (R) at (4,0) {$ \mathbf{Sh}(\mathcal{V}_A^{\op},C) $};
	\draw[bend left=15pt,->] (L) to node[above] {$ (-) \circ D^{\op} $} node[midway,name=U] {} (R);
	\draw[bend left=15pt,->] (R) to node[below] {$ \RanExt_{D^{\op}} $} node[midway,name=D] {} (L);
	\path (D) to node[sloped] {$ \vdash $} (U);
},  \]
where $ \RanExt_{D^{\op}} $ is a global right Kan extension functor.

\begin{Lem}
	$ D_{(-)} \colon (\mathcal{V}_A^{\op},C) \to \mathcal{O}(\Spec(A)) $ is a \emph{dense morphism of sites} in the sense of \cite[Definition 5.1]{Car2020}.
	That is, it satisfies the following conditions:
	\begin{enumerate}[label=(\roman*)]
		\item Any upper subset $ \mathfrak{u} \subseteq {\uparrow}(\lambda) = \Set{\mu \smid \mu \geq \lambda}$ 
		with $ D_{\lambda} = \bigcup_{\mu \in \mathfrak{u}} D_{\mu} $ contains some covering $ \{ \lambda \to \mu_i \}_i \in C(\lambda) $,
		i.e.\ $ \mu_i \in \mathfrak{u} $.
		
		\item Every open $ U \in \mathcal{O}(\Spec(A)) $ is covered by basic open sets $ D_{\lambda} $.
		
		\item If $ D_{\lambda} \subseteq D_{\nu} $, then there exists a covering $ \{ \lambda \to \mu_i \}_i \in C(\lambda) $ such that $ \nu \leq \mu_i $ for all $ i $.
	\end{enumerate}
\end{Lem}
\begin{proof}
	\noindent (i) By the proof of \emph{Proposition \ref{prop:spec(A)-is-spectral}}, $ D_{\lambda} $ is compact,
	and $ L_A $ is isomorphic to the distributive lattice of finite unions $ D_{\nu_1} \cup \dots \cup D_{\nu_k} $ of basic open sets.
	Thus, if $ D_{\lambda} = \bigcup_{\mu \in \mathfrak{u}} D_{\mu} $,
	there exists a finite subset $ \{\nu_1,\dots,\nu_k\} \subseteq \mathfrak{u} $ with $ D_{\lambda} = D_{\nu_1} \cup \dots \cup D_{\nu_k} $.
	Since $ L_A $ is the quotient lattice of $ \widetilde{\mathcal{V}_A}^{\op} $ by the relations $ \lambda = \bigwedge_i \mu_i $
	for coverings $ \{ \lambda \to \mu_i \}_i \in C(\lambda) $,
	the equation $ D_{\lambda} = D_{\nu_1} \cup \dots \cup D_{\nu_k} $ must be witnessed by some covering $ \{ \lambda \to \mu_i \}_i \in C(\lambda) $
	such that $ \forall i ,\, \exists j,\, \mu_i \geq \nu_j $.
	We also have $ \mu_i \in \mathfrak{u} $, as desired, because $ \mathfrak{u} $ is an upper set.
	
	\noindent (ii) Obvious from the definition of the topology.
	
	\noindent (iii) By a similar argument as (i), the equation $ D_{\lambda} = D_{\lambda} \cap D_{\nu} = D_{\lambda \vee \nu} $
	must be witnessed by some covering $ \{ \lambda \to \mu_i \}_i \in C(\lambda) $ with $ \mu_i \geq \lambda \vee \nu $.
\end{proof}

\begin{Rmk}
	Instead of showing denseness of $ D_{(-)} $, we can also prove the above equivalence 
	by using the frame of $ C $-ideals on $ \mathcal{V}_A^{\op} $ \cite[{\P}II.2.11]{JohSS}.
	A subset $ \mathfrak{f} \subseteq \mathcal{V}_A $ is a $ C $-ideal if it is an upper set and satisfies the following condition
	\[ \forall \{ \lambda \to \mu_i \}_i \in C(\lambda),\, [\forall i,\, \mu_i \in \mathfrak{f} \implies \lambda \in \mathfrak{f}]. \]
	The poset $ C\mhyphen\!\Idl(\mathcal{V}_A^{\op}) $ of $ C $-ideals is a frame having the following universal property:
	for any frame $ L $, if a meet-semilattice homomorphism $ h \colon \mathcal{V}_A^{\op} \to L $ 
	sends a covering $ \{ \lambda \to \mu_i \}_i \in C(\lambda) $ to a covering $ h(\lambda) = \bigvee_i h(\mu_i) $,
	then it is uniquely extended to a frame homomorphism $ C\mhyphen\!\Idl(\mathcal{V}_A^{\op}) \to L $.
	In particular, $ D_{(-)} $ is extended to $ C\mhyphen\!\Idl(\mathcal{V}_A^{\op}) \to \mathcal{O}(\Spec(A)) $.
	This map sends a $ C $-ideal $ \mathfrak{f} $ to the open set $ \bigcup_{\lambda \in \mathfrak{f}} D_{\lambda} $.
	We can show that this is an isomorphism by observing that the map
	\[ \mathcal{O}(\Spec(A)) \ni U \quad \mapsto \quad \Set{\lambda \smid D_{\lambda} \subseteq U } \in C\mhyphen\!\Idl(\mathcal{V}_A^{\op}) \]
	is its inverse.
	By \cite[Lemma V.1.7]{JohSS}, $ \mathbf{Sh}(C\mhyphen\!\Idl(\mathcal{V}_A^{\op})) $ and $ \mathbf{Sh}(\mathcal{V}_A^{\op},C) $ are equivalent,
	and hence so are $ \mathbf{Sh}(\Spec(A)) $ and $ \mathbf{Sh}(\mathcal{V}_A^{\op},C) $.
	In any case, our proofs exploit the construction of the reticulation $ L_A $ and the identification between $ L_A $ and the lattice of compact open sets of $ \Spec(A) $.
	We could not come up with direct proofs of compactness of $ D_{\lambda} $ and the conditions (i), (iii).
\end{Rmk}

The second difference between our and Coste's constructions of spectra lies in the definitions of their structure sheaves.
Coste defines the structure sheaf on the site $ (\mathcal{V}_A^{\op},C) $ as the associated sheaf $ \tilde{P} $ of the presheaf
\[ P \colon \mathcal{V}_A \ni \lambda \quad \mapsto \quad A_{\lambda} \in T_0\mhyphen\mathbf{Mod}. \]

\begin{Lem} \label{lem:comparison-of-str-sheaves}
	Under the equivalence $ \mathbf{Sh}(\mathcal{V}_A^{\op},C) \simeq \mathbf{Sh}(\Spec(A)) $, $ \tilde{P} $ corresponds to our structure sheaf $ S $ on $ \Spec(A) $.
\end{Lem}
\begin{proof}
	The morphism $ D_{(-)} $ of sites induces an adjunction
	\[ \tikz{
		\node (L) at (0,0) {$ \mathbf{Sh}(\Spec(A)) $};
		\node (R) at (4,0) {$ \mathbf{Sh}(\mathcal{V}_A^{\op},C) $};
		\draw[bend left=15pt,->] (L) to node[above] {$ (-) \circ D^{\op} $} node[midway,name=U] {} (R);
		\draw[bend left=15pt,->] (R) to node[below] {$ \mathbf{a} \circ \LanExt_{D^{\op}} $} node[midway,name=D] {} (L);
		\path (D) to node[sloped] {$ \dashv $} (U);
	},  \]
	where $ \LanExt_{D^{\op}} $ is a global left Kan extension functor,
	and $ \mathbf{a} \colon \mathbf{Set}^{\mathcal{O}(\Spec(A))^{\op}} \to \mathbf{Sh}(\Spec(A)) $ is the associated sheaf functor.
	We have already seen that $ (-) \circ D^{\op} $ is part of an equivalence,
	and thus $ \RanExt_{D^{\op}} \cong \mathbf{a} \circ \LanExt_{D^{\op}} $.
	Since the left square below is commutative, the right square commutes up to natural isomorphism.
	\[ \tikz[auto]{
		\node (UL) at (0,2) {$ \mathbf{Set}^{\mathcal{V}_A} $};
		\node (DL) at (0,0) {$ \mathbf{Sh}(\mathcal{V}_A^{\op},C) $};
		\node (UR) at (4,2) {$ \mathbf{Set}^{\mathcal{O}(\Spec(A))^{\op}} $};
		\node (DR) at (4,0) {$ \mathbf{Sh}(\Spec(A)) $};
		
		\draw[<-] (UL) to (DL);
		\draw[<-] (UR) to (DR);
		\draw[<-] (DL) to node {$ (-) \circ D^{\op} $}(DR);
		\draw[<-] (UL) to node {$ (-) \circ D^{\op} $} (UR);
	} \quad
	\tikz[auto]{
		\node (UL) at (0,2) {$ \mathbf{Set}^{\mathcal{V}_A} $};
		\node (DL) at (0,0) {$ \mathbf{Sh}(\mathcal{V}_A^{\op},C) $};
		\node (UR) at (4,2) {$ \mathbf{Set}^{\mathcal{O}(\Spec(A))^{\op}} $};
		\node (DR) at (4,0) {$ \mathbf{Sh}(\Spec(A)) $};
		
		\draw[->] (UL) to node {$ \tilde{(-)} $} (DL);
		\draw[->] (UR) to node {$ \mathbf{a} $} (DR);
		\draw[->] (DL) to node {$ \mathbf{a} \circ \LanExt_{D^{\op}} $}(DR);
		\draw[->] (UL) to node {$ \LanExt_{D^{\op}} $} (UR);
	}. \]
	Hence, we have isomorphisms of sheaves
	\[ \RanExt_{D^{\op}}(\tilde{P}) \cong \mathbf{a}(\LanExt_{D^{\op}}(\tilde{P})) \cong \mathbf{a}(\LanExt_{D^{\op}}(P)). \]
	Since the sheafification process does not change stalks, it suffices to show that the stalk of $ \LanExt_{D^{\op}}(P) $ at $ I \in \Spec(A) $ 
	is isomorphic to $ A_I $ (the stalk of $ S $ at $ I $).
	We have a canonical homomorphism $ A_I \to (\LanExt_{D^{\op}}(P))_I $ making the following square commutative for each $ \lambda \in I $:
	\[ \tikz[auto]{
		\node (UL) at (0,1.5) {$ A_I $};
		\node (DL) at (0,0) {$ A_{\lambda} $};
		\node (UR) at (3,1.5) {$ (\LanExt_{D^{\op}}(P))_I $};
		\node (DR) at (3,0) {$ (\LanExt_{D^{\op}}(P))(D_{\lambda}) $};
		\node at (7,0) {$ = \rlim\left ((D_{\lambda} \downarrow D)^{\op} \to \mathcal{V}_A \xrightarrow{P} \mathbf{Set} \right ) $};
		\draw[<-] (UL) to (DL);
		\draw[<-] (UR) to (DR);
		\draw[->] (DL) to node {coproj.} (DR);
		\draw[->] (UL) to (UR);
	} \]
	From this description, it is easy to check that $  A_I \cong (\LanExt_{D^{\op}}(P))_I $.
\end{proof}

We have finished the proof of the following theorem:

\begin{Thm*} \label{thm:equiv-of-two-modelled-toposes}
	$ (\mathbf{Sh}(\mathcal{V}_A^{\op},C), \tilde{P}) $ and $ (\mathbf{Sh}(\Spec(A)), S) $ are equivalent as $ T_0 $-modelled toposes.
\end{Thm*}

Before leaving this subsection, we make a few comments on an interesting property of $ D_{(-)} $ which does not hold in general.
We say that \emph{the prime ideal theorem (PIT) holds for a $ T_0 $-model $ A $}
when the functor $ D_{(-)} \colon \mathcal{V}_A^{\op} \to \mathcal{O}(\Spec(A)) $ is full.
In other words, if $ \lambda \nleq \mu $ in $ \mathcal{V}_A $, then there exists $ I \in \Spec(A) $ such that $ \mu \in I $ and $ \lambda \notin I $.
In the case when $ T $ contains an additional axiom $ \varphi \vdash \exists y \psi $,
we may have a singleton covering $ \{ \lambda \to \mu \} $, but we cannot distinguish $ \lambda $ and $ \mu $ by ideals in $ \Spec(A) $.
Thus, $ D_{(-)} $ need not be full.

From the description in \emph{Example \ref{expl:V_A-and-ideals-in-examples}}, concerning the DL context, 
PIT for a distributive lattice $ L $ is the well-known prime filter theorem.
Concerning the Zariski context, PIT holds for any ring $ A $:
\[ b \notin \sqrt{\langle a \rangle} \implies \exists \mathfrak{p} \in \Spec_Z(A),\, a \in \mathfrak{p} \;\text{and}\; b \notin \mathfrak{p}. \]
This amounts to the fact that $ \sqrt{\langle a \rangle} = \bigcap \Set{\mathfrak{p} \in \Spec_Z(A) \smid a \in \mathfrak{p}} $.

\subsection{Standard Coste Contexts} \label{subsec:standard-context}
It is natural to ask when a model can be recovered as the model of global sections of the structure sheaf.
Coste gave some necessary and sufficient conditions for a Coste context so that every model can be recovered.
To state them, we need to recall some constructions from categorical logic.
For any coherent theory $ T $, we can construct \emph{the coherent syntactic category $ \mathcal{C}^{\mathrm{coh}}_{T} $ of $ T $}
as we did in \emph{Proposition \ref{prop:cart-cat-vs-cart-th}}(1) for the cartesian case.
Its objects are coherent formulas, and its morphisms are equivalence classes of $ T $-functional coherent formulas.
All the details are described in \cite[Chapter D1]{Elephant}.
In the rest of this section, we will write $ \mathcal{C}^{\mathrm{cart}}_{T_0} $ for the (cartesian) syntactic category of $ T_0 $
to distinguish cartesian and coherent syntactic categories.

For a Coste context $ (T_0,T,\Lambda) $, 
we have the coverage $ J^{T}_{T_0} $ on $ \mathcal{C}^{\mathrm{cart}}_{T_0} $ generated by the families
\[ \Set{ \{\bvec{uv}_i.\,\psi_i \} \to \{\bvec{u}.\,\varphi\}}_i \]
for an additional axiom $ \varphi \vdash \bigvee_i \exists \bvec{v}_i \psi_i \in T $.
We also have the coherent coverage $ J_T $ on $ \mathcal{C}^{\mathrm{coh}}_{T} $, which is always subcanonical.
These sites $ (\mathcal{C}^{\mathrm{cart}}_{T_0},J^{T}_{T_0}) $ and $ (\mathcal{C}^{\mathrm{coh}}_{T},J_T) $
have an equivalent topos of sheaves: it is the classifying topos of $ T $ (\cite[Proposition D3.1.10]{Elephant}).

\begin{Thm}[{\cite[Theorem 4.5.1 and Proposition 4.5.2]{Coste1979}}]
	For a spatial Coste context $ (T_0,T,\Lambda) $, the following are equivalent:
	\begin{enumerate}[label=(\roman*)]
		\item The canonical functor $ F \colon \mathcal{C}^{\mathrm{cart}}_{T_0} \to \mathcal{C}^{\mathrm{coh}}_{T} $ is fully faithful.
		\item The site $ (\mathcal{C}^{\mathrm{cart}}_{T_0},J^{T}_{T_0}) $ is subcanonical.
		\item For each $ T_0 $-model $ A $, the functor $ P_A \colon \mathcal{V}_A \ni \lambda \mapsto A_{\lambda} \in \mathbf{Set} $ 
		is a sheaf for the site $ (\mathcal{V}_A^{\op},C) $. 
		That is, for each $ \lambda \in \mathcal{V}_A$, a covering $ \{ \lambda \to \mu_i \}_i \in C(\lambda) $, and a family $ \{ a_i\}_i  \in \prod_i A_{\mu_i} $
		satisfying $ (A_{\mu_i} \to A_{\mu_i \vee \mu_j})(a_i) = (A_{\mu_j} \to A_{\mu_i \vee \mu_j})(a_j) $ as elements of $ A_{\mu_i \vee \mu_j} $,
		there exists a unique $ a \in A_{\lambda} $ with $ a_i = (A_{\lambda} \to A_{\mu_i})(a) $ for all $ i $.
		
		\item For each $ T_0 $-model $ A $, the unit component $ \eta_A \colon A \to \Gamma(\Spec(A),S) $ of the adjunction $ \Gamma \dashv \Spec $
		is an isomorphism. Equivalently, $ \Spec \colon T_0\mhyphen\mathbf{Mod}^{\op} \to \modsp{\mathbb{A}}{\mathbf{Sp}} $ is fully faithful.
	\end{enumerate}
\end{Thm}
\begin{proof}
	(i)$ \Leftrightarrow $(ii):
	Let us write $ \mathbf{a},\mathbf{y} $ for the associated sheaf functor and the Yoneda embedding, respectively,
	w.r.t.\ the site $ (\mathcal{C}^{\mathrm{cart}}_{T_0},J^{T}_{T_0}) $.
	Similarly, $ \mathbf{a'},\mathbf{y'} $ w.r.t.\ $ (\mathcal{C}^{\mathrm{coh}}_{T},J_T) $.
	By an argument similar to the proof of \emph{Lemma \ref{lem:comparison-of-str-sheaves}},
	the square below commutes up to natural isomorphism
	\[ \tikz[auto]{
		\node (UL) at (0,2) {$ \mathcal{C}^{\mathrm{cart}}_{T_0} $};
		\node (DL) at (0,0) {$ \mathbf{Sh}(\mathcal{C}^{\mathrm{cart}}_{T_0},J^{T}_{T_0}) $};
		\node (UR) at (4,2) {$ \mathcal{C}^{\mathrm{coh}}_{T} $};
		\node (DR) at (4,0) {$ \mathbf{Sh}(\mathcal{C}^{\mathrm{coh}}_{T},J_T) $};
		
		\draw[->] (UL) to node {$ \mathbf{ay} $} (DL);
		\draw[->] (UR) to node {$ \mathbf{y'} $} (DR);
		\draw[->] (DL) to node {$ \mathbf{a'} \circ \LanExt_{F^{\op}} $}(DR);
		\draw[->] (UL) to node {$ F $} (UR);
	}. \]
	Here, $ \mathbf{a'} \circ \LanExt_{F^{\op}} $ is part of an equivalence.
	If $ (\mathcal{C}^{\mathrm{cart}}_{T_0},J^{T}_{T_0}) $ is subcanonical, then $ \mathbf{ay} $ is fully faithful, and so is $ F $.
	Conversely, suppose that $ F $ is fully faithful.
	Then we can easily see that the composite of the functors
	\[ \mathcal{C}^{\mathrm{cart}}_{T_0} \xrightarrow{F} \mathcal{C}^{\mathrm{coh}}_{T} \xrightarrow{\mathbf{y'}}
	\mathbf{Sh}(\mathcal{C}^{\mathrm{coh}}_{T},J_T) \xrightarrow{(-) \circ F^{\op}} \mathbf{Sh}(\mathcal{C}^{\mathrm{cart}}_{T_0},J^{T}_{T_0}) 
	\hookrightarrow \mathbf{Set}^{(\mathcal{C}^{\mathrm{cart}}_{T_0})^{\op}} \]
	is naturally isomorphic to the Yoneda embedding $ \mathbf{y} $,
	and thus $(\mathcal{C}^{\mathrm{cart}}_{T_0},J^{T}_{T_0}) $ is subcanonical.
	
	\noindent (ii)$ \Rightarrow $(iii):
	Suppose that the representable presheaf $ \Hom(-,\theta) \colon (\mathcal{C}^{\mathrm{cart}}_{T_0})^{\op} \to \mathbf{Set} $ 
	is a $ J^{T}_{T_0} $-sheaf for each $ T_0 $-cartesian formula $ \theta $.
	This implies that, for each $ \varphi \vdash \bigvee_i \exists \bvec{v}_i \psi_i \in T $, $ \{\bvec{u}.\,\varphi\} $ is a colimit of the finite diagram
	\[ \Set{ \{\bvec{uv}_i.\,\psi_i \} \leftarrow \{\bvec{u}\bvec{v}_i\bvec{v}_j.\,\psi_i \land \psi_j \} \to \{\bvec{uv}_j.\,\psi_j \} }_{i,j}. \]
	For simplicity, we will consider a covering $ \{\mu \leftarrow \lambda \to \mu' \} \in C(\lambda) $
	induced by an axiom $ \varphi(\bvec{u}) \vdash \exists \bvec{v}\psi(\bvec{u},\bvec{v}) \lor \exists\bvec{v'}\psi'(\bvec{u},\bvec{v'}) $ in $ T $,
	where the tuples $ \bvec{u},\bvec{v},\bvec{v'} $ are assumed to be mutually disjoint.
	Then the following square in $ \mathcal{C}^{\mathrm{cart}}_{T_0} $ is a pullback
	\[ \tikz[auto]{
		\node (UL) at (0,1.5) {$ \{\bvec{uvv'}.\,\psi \land \psi' \} $};
		\node (UR) at (3,1.5) {$ \{\bvec{uv'}.\,\psi'\} $};
		\node (DL) at (0,0) {$ \{\bvec{uv}.\,\psi\} $};
		\node (DR) at (3,0) {$ \{\bvec{u}.\,\varphi\} $};
		\draw[->] (UL) to (UR);
		\draw[->] (UL) to (DL);
		\draw[->] (DL) to (DR);
		\draw[->] (UR) to (DR);
	}.  \]
	By the assumption, it is also a pushout.
	Put $ \theta \equiv \psi \land \psi' $.
	Consider the following cube whose faces are pushouts:
	\[ \tikzset{cross/.style={preaction={-,draw=white,line width=6pt}}}
	\tikz[auto]{
		\node (UL1) at (0,2,0) {$ M_{\varphi} $};
		\node (DL1) at (3,2,3) {$ M_{\psi} $};
		\node (UR1) at (3,2,0) {$ M_{\psi'}$};
		\node (DR1) at (6,2,3) {$ M_{\theta}$};
		
		\node (UL2) at (0,0,0) {$ A_{\lambda}$};
		\node (DL2) at (3,0,3) {$ A_{\mu} $};
		\node (UR2) at (3,0,0) {$ A_{\mu'}$};
		\node (DR2) at (6,0,3) {$ A_{\mu\vee\mu'}$};
		
		\draw[->] (UL1) to (UR1);
		\draw[->] (UR1) to node[right,pos=0.7] {} (UR2);
		\draw[->] (UL1) to node[left] {} (UL2);
		\draw[->] (UL2) to (UR2);
		\draw[->,cross] (DR1) to (DR2);
		
		\draw[->] (UL1) to (DL1);
		\draw[->] (UL2) to (DL2);
		\draw[->,cross] (DL1) to (DL2);
		\draw[->,cross] (DL1) to (DR1);
		\draw[->] (DL2) to (DR2);
		\draw[->] (UR1) to (DR1);
		\draw[->] (UR2) to (DR2);
	}. \]
	The top face is also a pullback in $ T_0\mhyphen\mathbf{Mod}_{\fp} \simeq (\mathcal{C}^{\mathrm{cart}}_{T_0})^{\op} $.
	We will show the bottom face is a pullback in $ T_0\mhyphen\mathbf{Mod} $, from which the sheaf condition for $ P_A $ follows.
	
	Suppose that 
	\[ \tikz[auto]{
		\node (UL) at (0,1.5) {$ M $};
		\node (UR) at (1.5,1.5) {$ A_{\mu'} $};
		\node (DL) at (0,0) {$ A_{\mu} $};
		\node (DR) at (1.5,0) {$ A_{\mu \vee \mu'} $};
		\draw[->] (UL) to node {$ l $} (UR);
		\draw[->] (UL) to node {$ k $} (DL);
		\draw[->] (DL) to node {} (DR);
		\draw[->] (UR) to (DR);
	} \]
	is commutative.
	To obtain the desired homomorphism $ M \to A_{\lambda} $, we may assume that $ M $ is finitely presentable, 
	since an arbitrary $ T_0 $-model is a filtered colimit of finitely presentable ones.
	Like the proof of the lifting lemma, we can represent the bottom face of the cube as a filtered colimit of pushout squares 
	``between the top and bottom faces'' with finitely presentable vertices.
	Then, after replacing the top face by its pushout along some homomorphism, we can lift $ k,l $ so that the following diagram commutes
	\[ \tikz[auto]{
		\node (UL) at (0,1.5) {$ M $};
		\node (UR) at (1.5,1.5) {$ M_{\psi'} $};
		\node (DL) at (0,0) {$ M_{\psi} $};
		\node (DR) at (1.5,0) {$ M_{\theta} $};
		\draw[->] (UL) to node {$ l $} (UR);
		\draw[->] (UL) to node {$ k $} (DL);
		\draw[->] (DL) to node {} (DR);
		\draw[->] (UR) to (DR);
	}. \]
	Since $ J^{T}_{T_0} $-covers are stable under pullbacks, the new top face is a pullback in $ T_0\mhyphen\mathbf{Mod}_{\fp} $,
	and we obtain a homomorphism $ M \to M_{\varphi} $ as desired.
	
	\noindent (iii)$ \Rightarrow $(iv):
	If we assume (iii), then we can identify $ P_A $ with the structure sheaf of $ \Spec(A) $ by \emph{Theorem \ref{thm:equiv-of-two-modelled-toposes}}.
	The global section model is given by $ P_A(\id_A) = A $, and thus we have a sheaf representation of $ A $.

	\noindent (iv)$ \Rightarrow $(i):
	To show that $ F $ is faithful, 
	it suffices to show $ T_0 \models \varphi \vdash \psi $ when $ \varphi \vdash \psi $ is a $ T_0 $-cartesian sequent such that $ T \models \varphi \vdash \psi $.
	For each $ T_0 $-model $ A $, the structure sheaf $ S $ of $ \Spec(A) $ is a $ T $-model in $ \mathbf{Sh}(\Spec(A)) $, and hence $ S \models \varphi \vdash \psi $.
	Since $ A \cong \Gamma(\Spec(A),S) $ and the global section functor $ \Gamma \colon \mathbf{Sh}(\Spec(A)) \to \mathbf{Set} $ preserves validity of cartesian sequents,
	we have $ A \models \varphi \vdash \psi $ as desired.
	
	We now prove fullness of $ F $.
	Suppose that $ \varphi,\psi $ are $ T_0 $-cartesian formulas and 
	$ [\theta] \colon \{\bvec{u}.\,\varphi\} \to \{\bvec{v}.\,\psi\} $ is a morphism in $ \mathcal{C}^{\mathrm{coh}}_{T} $.
	We would like to find a $ T_0 $-cartesian functional formula $ \chi(\bvec{u},\bvec{v}) $ defining a morphism $ \{\bvec{u}.\,\varphi\} \to \{\bvec{v}.\,\psi\} $ 
	in $ \mathcal{C}^{\mathrm{cart}}_{T_0} $ such that 
	\[ T \models \chi \vdash \theta. \]
	If we can find such $ \chi $, since $ \chi,\theta $ are $ T $-functional formulas with the same domain and codomain, we also have $ T \models \theta \vdash \chi $.
	
	For a fixed $ T_0 $-model $ A $, again because $ S $ is a $ T $-model in $ \mathbf{Sh}(\Spec(A)) $,
	we have definable subsheaves $ \dbracket{\theta}_S \subseteq \dbracket{\varphi \land \psi}_S \cong \dbracket{\varphi}_S \times \dbracket{\psi}_S $ of a finite power of $ S $.
	Moreover, $ \dbracket{\theta}_S $ is a retract of $ \dbracket{\varphi \land \psi}_S $ since the projection $ \dbracket{\theta}_S \to \dbracket{\varphi}_S $ is an isomorphism.
	We will take $ \chi $ as ``a formula defining the functor $ A \mapsto \Gamma_A(\dbracket{\theta}_S) $,''
	where $ \Gamma_A \colon \mathbf{Sh}(\Spec(A)) \to \mathbf{Set} $ is the global section functor.
	
	Let $ \alpha \colon A \to B$ be a homomorphism of $ T_0 $-models, 
	and $ \tilde{\alpha} \colon (\Spec(B),R) \to (\Spec(A),S) $ be the induced morphism in $ \modsp{\mathbb{A}}{\mathbf{Sp}} $ by \emph{Proposition \ref{prop:hom-induces-morph-bw-Spec}}.
	Since $ \tilde{\alpha}^{\flat} \colon \tilde{\alpha}^*S \to R$ is a homomorphism of $ T $-models, we have the left diagram below.
	The inverse image functor $ \tilde{\alpha}^* \colon \mathbf{Sh}(\Spec(A)) \to \mathbf{Sh}(\Spec(B)) $ preserves interpretations of coherent formulas,
	and hence transposition w.r.t.\ the adjunction $ \tilde{\alpha}^* \dashv \tilde{\alpha}_* $ yields the right one:
	\[ \tikzset{cross/.style={preaction={-,draw=white,line width=6pt}}}
	\tikz[auto]{
		\node (DL1) at (2,2,3) {$ \dbracket{\theta}_{\tilde{\alpha}^*S} $};
		\node (UR1) at (3,2,0) {$ \dbracket{\varphi \land \psi}_{\tilde{\alpha}^*S} $};
		\node (DR1) at (6,2,3) {$ \dbracket{\theta}_{\tilde{\alpha}^*S} $};
		
		\node (DL2) at (2,0,3) {$ \dbracket{\theta}_{R} $};
		\node (UR2) at (3,0,0) {$ \dbracket{\varphi \land \psi}_{R} $};
		\node (DR2) at (6,0,3) {$ \dbracket{\theta}_{R} $};
		
		\draw[->] (DL1) to (UR1);
		\draw[->] (DL2) to (UR2);
		\draw[->] (UR1) to node[right,pos=0.7] {} (UR2);
		\draw[->,cross] (DR1) to (DR2);
		
		\draw[->,cross] (DL1) to (DL2);
		\draw[=,double distance=3pt,cross] (DL1) to (DR1);
		\draw[=,double distance=3pt,] (DL2) to (DR2);
		\draw[->] (UR1) to (DR1);
		\draw[->] (UR2) to (DR2);
	}
	\qquad
	\tikz[auto]{
		\node (DL1) at (2,2,3) {$ \dbracket{\theta}_{S} $};
		\node (UR1) at (3,2,0) {$ \dbracket{\varphi \land \psi}_{S} $};
		\node (DR1) at (6,2,3) {$ \dbracket{\theta}_{S} $};
		
		\node (DL2) at (2,0,3) {$ \tilde{\alpha}_*(\dbracket{\theta}_{R}) $};
		\node (UR2) at (3,0,0) {$ \tilde{\alpha}_*(\dbracket{\varphi \land \psi}_{R}) $};
		\node (DR2) at (6,0,3) {$ \tilde{\alpha}_*(\dbracket{\theta}_{R}) $};
		
		\draw[->] (DL1) to (UR1);
		\draw[->] (DL2) to (UR2);
		\draw[->] (UR1) to node[right,pos=0.7] {} (UR2);
		\draw[->,cross] (DR1) to (DR2);
		
		\draw[->,cross] (DL1) to (DL2);
		\draw[=,double distance=3pt,cross] (DL1) to (DR1);
		\draw[=,double distance=3pt,] (DL2) to (DR2);
		\draw[->] (UR1) to (DR1);
		\draw[->] (UR2) to (DR2);
	}. \]
	Taking global sections, we obtain the following diagram:
	\[\tikzset{cross/.style={preaction={-,draw=white,line width=6pt}}}
	\tikz[auto]{
		\node (DL1) at (2,2,3) {$ \Gamma_A(\dbracket{\theta}_{S}) $};
		\node (UR1) at (5,2,0) {$ \Gamma_A(\dbracket{\varphi \land \psi}_{S}) \cong \dbracket{\varphi \land \psi}_{A} $};
		\node (DR1) at (10,2,3) {$ \Gamma_A(\dbracket{\theta}_{S}) $};
		
		\node (DL2) at (2,0,3) {$ \Gamma_B(\dbracket{\theta}_{R}) $};
		\node (UR2) at (5,0,0) {$ \Gamma_B(\dbracket{\varphi \land \psi}_{R}) \cong \dbracket{\varphi \land \psi}_{B} $};
		\node (DR2) at (10,0,3) {$ \Gamma_B(\dbracket{\theta}_{R}) $};
		
		\draw[->] (node cs:name=DL1,anchor=north east) to (node cs:name=UR1,anchor=south west);
		\draw[->] (node cs:name=DL2,anchor=north east) to (node cs:name=UR2,anchor=south west);
		\draw[->] (UR1) to (UR2);
		\draw[->,cross] (DR1) to (DR2);
		
		\draw[->,cross] (DL1) to (DL2);
		\draw[=,double distance=3pt,cross] (DL1) to (DR1);
		\draw[=,double distance=3pt,] (DL2) to (DR2);
		\draw[->] (node cs:name=UR1,anchor=south east) to (node cs:name=DR1,anchor=north west);
		\draw[->] (node cs:name=UR2,anchor=south east) to (node cs:name=DR2,anchor=north west);
	}. \]
	Therefore, the functor
	\[ T_0\mhyphen\mathbf{Mod} \ni A \quad \mapsto \quad \Gamma_A(\dbracket{\theta}_S) \in \mathbf{Set} \]
	is a retract of the functor $ A \mapsto \dbracket{\varphi \land \psi}_A \cong \Hom(M_{\varphi \land \psi},A) $.
	Since any retract of a representable functor in $ [T_0\mhyphen\mathbf{Mod}_{\fp},\mathbf{Set}] $ (or in $ [T_0\mhyphen\mathbf{Mod},\mathbf{Set}] $) is again representable,
	we can find a $ T_0 $-cartesian formula $ \chi(\bvec{u},\bvec{v}) $ such that it is a retract of $ \{\bvec{uv}.\,\varphi \land \psi \} $ in $ \mathcal{C}^{\mathrm{cart}}_{T_0} $
	and, for each $ T_0 $-model $ A $, $ \Gamma_A(\dbracket{\theta}_S) = \dbracket{\chi}_A $ as subobjects of $ \dbracket{\varphi \land \psi}_A $.
	
	The obtained formula $ \chi $ defines a morphism $ \{\bvec{u}.\,\varphi\} \to \{\bvec{v}.\,\psi\} $ in $ \mathcal{C}^{\mathrm{cart}}_{T_0} $.
	To show that $ T \models \chi \vdash \theta $, let us suppose that $ A \models \chi(\bvec{a},\bvec{b}) $ for a $ T $-model $ A $.
	If we regard $ \bvec{a},\bvec{b} $ as global sections of $ S $,
	the tuple $ (\bvec{a},\bvec{b}) $ is a global section of the subsheaf $ \dbracket{\theta}_S $.
	Since $ \theta $ is a coherent formula, the germ $ (\bvec{a}_I,\bvec{b}_I) \in A_I $ satisfies $ \theta $ for each $ I \in \Spec(A) $.
	But $ A \models T $ implies $ \{\id_A\} \in \Spec(A) $, and hence $ A \models \theta(\bvec{a},\bvec{b}) $.
	This completes the proof of fullness of $ F $.
\end{proof}

In fact, after suitable modifications, the above proof can be generalized to arbitrary Coste contexts.
We will say a (spatial) Coste context is \emph{standard} if it satisfies any one of the above conditions.
Notice that the conditions (i) and (ii) do not depend on the choice of $ \Lambda $,
and thus any extension $ (T_0,T,\Lambda') $ of a standard context $ (T_0,T,\Lambda) $, i.e.\ $ \Lambda' \supseteq \Lambda $, is standard.

\begin{Cor} \label{cor:representation-as-local-sections}
	Let $ (T_0,T,\Lambda) $ be a standard spatial Coste context, and $ A $ be a $ T_0 $-model.
	Then, for each $ \lambda \in \mathcal{V}_A $, there exists an isomorphism $ A_{\lambda} \cong S(D_{\lambda}) $.
\end{Cor}
\begin{proof}
	In the proof of the implication (ii)$ \Rightarrow $(iii),
	we have seen that, for each covering $ \{\lambda \to \mu_i \}_i \in C(\lambda) $,
	$ A_{\lambda} $ is a limit in $ T_0\mhyphen\mathbf{Mod} $ of the finite diagram $ \{ A_{\mu_i} \to A_{\mu_i \vee \mu_j} \leftarrow A_{\mu_j} \}_{i,j} $.
	Thus, $ \lambda $ is a limit in $ \mathcal{V}_A $ of the finite diagram $ \{\mu_i \to \mu_i \vee \mu_j \leftarrow \mu_j \} $,
	and every representable functor $ \Hom(\lambda,-) \colon \mathcal{V}_A \to \mathbf{Set} $ is a sheaf for the site $ (\mathcal{V}_A^{\op},C) $.
	Under the equivalence $ \mathbf{Sh}(\mathcal{V}_A^{\op},C) \simeq \mathbf{Sh}(\Spec(A)) $,
	$ \Hom(\lambda,-) $ corresponds to the subterminal $ D_{\lambda} $. Therefore, 
	\[ A_{\lambda} \cong \Hom_{\mathbf{Sh}(\mathcal{V}_A^{\op},C)}(\Hom(\lambda,-),P_A) \cong \Hom_{\mathbf{Sh}(\Spec(A))}(D_{\lambda},S) \cong S(D_{\lambda}). \qedhere \]
\end{proof}

Finally, we give a characterization of faithfulness of $ F $.

\begin{Prop}[{\cite[Proposition 4.5.3]{Coste1979}}]
	For a Coste context $ (T_0,T,\Lambda) $, the following are equivalent:
	\begin{enumerate}[label=(\roman*)]
		\item $ F \colon \mathcal{C}^{\mathrm{cart}}_{T_0} \to \mathcal{C}^{\mathrm{coh}}_{T} $ is faithful.
		\item For each $ T_0 $-model $ A $, a Horn formula $ \varphi(\bvec{u},\bvec{v}) $, and a tuple $ \bvec{a} \in A^{\bvec{u}} $,
		if $ B \models \exists!\bvec{v} \varphi(\alpha(\bvec{a}),\bvec{v}) $ for any homomorphism $ \alpha \colon A \to B $ with $ B \models T $,
		then $ A \models \exists!\bvec{v} \varphi(\bvec{a},\bvec{v}) $.
	\end{enumerate}
\end{Prop}
\begin{proof}
	(i)$ \Rightarrow $(ii):
	For a $ T_0 $-model $ A $, let $ \mathcal{L}_A $ be the language obtained by adding to $ \mathcal{L} $ a new constant symbol for each element $ a \in A $.
	We define an $ \mathcal{L}_A $-theory 
	\[ \Diag(A) := \Set{\sigma(\bvec{a}) \smid \sigma(\bvec{u})\;\text{is an atomic  formula, $ \bvec{a} \in A^{\bvec{u}} $, and}\; A \models \sigma(\bvec{a})}. \]
	Then $ T \cup \Diag(A) \models \psi(\bvec{a},\bvec{v}) \vdash \theta(\bvec{a},\bvec{v}) $ 
	implies $ T_0 \cup \Diag(A) \models \psi(\bvec{a},\bvec{v}) \vdash \theta(\bvec{a},\bvec{v}) $ 
	when $ \psi(\bvec{u},\bvec{v}) \vdash \theta(\bvec{u},\bvec{v}) $ is a $ T_0 $-cartesian sequent and $ \bvec{a} \in A^{\bvec{u}} $.
	Indeed, if $ T \cup \Diag(A) \models \psi(\bvec{a},\bvec{v}) \vdash \theta(\bvec{a},\bvec{v}) $, by compactness,
	there exist finitely many $ \sigma_1(\bvec{a}_1),\dots, \sigma_k(\bvec{a}_k) \in \Diag(A) $ such that 
	$ T \cup \{\sigma_1(\bvec{a}_1),\dots, \sigma_k(\bvec{a}_k)\} \models \psi(\bvec{a},\bvec{v}) \vdash \theta(\bvec{a},\bvec{v}) $. 
	Then 
	\begin{align*}
		& T \models \sigma_1(\bvec{a}_1) \land \dots \land \sigma_k(\bvec{a}_k) \land \psi(\bvec{a},\bvec{v}) \vdash \theta(\bvec{a},\bvec{v}), \\
		\implies & T_0 \models \sigma_1(\bvec{a}_1) \land \dots \land \sigma_k(\bvec{a}_k) \land \psi(\bvec{a},\bvec{v}) \vdash \theta(\bvec{a},\bvec{v}),
	\end{align*}
	by the assumption.
	Thus, $ T_0 \cup \Diag(A) \models \psi(\bvec{a},\bvec{v}) \vdash \theta(\bvec{a},\bvec{v}) $.
	
	Giving a homomorphism $ \alpha \colon A \to B $ with $ B \models T$ is equivalent to giving a model of $ T \cup \Diag(A) $.
	Therefore, for given $ A,\varphi,\bvec{a} $, the hypothesis of (ii) implies $ T \cup \Diag(A) \models \exists!\bvec{v} \varphi(\bvec{a},\bvec{v}) $.
	The above observation yields $ T_0 \cup \Diag(A) \models \exists!\bvec{v} \varphi(\bvec{a},\bvec{v}) $ as desired.
	
	\noindent (ii)$ \Rightarrow $(i):
	By \cite[Proposition D1.3.10(ii)]{Elephant}, instead of taking account of arbitrary $ T_0 $-cartesian sequents,
	we only have to consider a sequent of the form $ \psi(\bvec{u}) \vdash \exists \bvec{v}\theta(\bvec{u},\bvec{v}) $ for Horn formulas $ \psi,\theta $.
	Suppose that $ T \models \psi \vdash \exists \bvec{v} \theta $.
	Applying the assumption to the case when $ A=M_{\psi} $, $\varphi \equiv \theta $, and $ \bvec{u} \in M_{\psi} $,
	we obtain $ M_{\psi} \models \exists!\bvec{v}\theta(\bvec{u},\bvec{v}) $, which implies $ T_0 \models \psi \vdash \exists\bvec{v}\theta $.
\end{proof}

\section{Colimits of Modelled Spaces} \label{sec:colimits}
The category of locally ringed spaces is known to be cocomplete \cite[Chap.\ I, Proposition 1.6]{DemaGab1980}.
In this section, we will generalize this by proving that $ \modsp{\mathbb{A}}{\mathbf{Sp}} $ has all small colimits for any spatial Coste context $ (T_0,T,\Lambda) $.
It suffices to construct coproducts and coequalizers.
These colimits will be preserved by the forgetful functor $ \modsp{\mathbb{A}}{\mathbf{Sp}} \to \modsp{T_0}{\mathbf{Sp}} $.
If we consider the trivial Coste context $ (T_0,T_0,\varnothing) $, the following proof also shows that $ \modsp{T_0}{\mathbf{Sp}} $ is cocomplete.
We will also provide
\begin{itemize}
	\item an example showing that the full subcategory of $ \modsp{T_0}{\mathbf{Sp}} $ spanned by $ T $-modelled spaces is not closed under coequalizers, and
	\item a non-spatial Coste context for which $ \modsp{\mathbb{A}}{\mathbf{Sp}} $ is not closed under coequalizers in $ \modsp{T_0}{\mathbf{Sp}} $.
\end{itemize}

Notice that the forgetful functor $ \modsp{T_0}{\mathbf{Sp}} \to \mathbf{Sp}$ has both left and right adjoints.
The left adjoint sends a space $ X $ to $ (X,\Delta_X) $, where $ \Delta_X $ is the constant sheaf of the trivial (terminal) $ T_0 $-model.
The right adjoint sends a space $ X $ to $ (X,\nabla_X) $, where $ \nabla_X $ is the locally constant sheaf of the initial $ T_0 $-model.
Thus, the underlying space of a limit (resp.\ a colimit) in $ \modsp{T_0}{\mathbf{Sp}} $ must be a limit (resp.\ a colimit) of the same type in $ \mathbf{Sp} $.

\paragraph{Coproducts}
Let $ F := \{*\} $ be the trivial (terminal) $ T_0 $-model.
The initial object of $ \modsp{\mathbb{A}}{\mathbf{Sp}} $ is the empty space $ \varnothing $ equipped with the trivial sheaf $ F $.
Indeed, $ \mathbf{Sh}(\varnothing) \simeq \mathbf{1}$ is the degenerated topos, and its internal logic makes everything valid.
Thus, $ (\varnothing,F) $ is a $ T $-modelled space.

Let $ \{(X_i,P_i)\}_{i \in I} $ be a family of $ T $-modelled spaces.
We can construct a coproduct $ (X,P) = \coprod_i (X_i,P_i) $ as follows:
$ X $ is the coproduct space $ \coprod_i X_i $. We write $ \coprod_i U_i $ for the open subset of $ X $ determined by open sets $ U_i \subseteq X_i $.
Then the $ T_0 $-model $ P(\coprod_i U_i) $ is defined to be the product $ \prod_i P_i U_i $.
The reader can easily verify that it is a $ T $-modelled space and has the desired properties as a coproduct in $ \modsp{\mathbb{A}}{\mathbf{Sp}} $.

\paragraph{Coequalizers}
Let $ f,g \colon (X,P) \rightrightarrows (Y,Q) $ be morphisms in $ \modsp{\mathbb{A}}{\mathbf{Sp}} $.
We would like to construct a coequalizer in $ \modsp{\mathbb{A}}{\mathbf{Sp}} $
\[ \tikz[auto]{
	\node (L) at (0,0) {$ (X,P) $};
	\node (C) at (2,0) {$ (Y,Q) $};
	\node (R) at (4,0) {$ (Z,R) $};
	\draw[transform canvas={yshift=3pt},->] (L) to node {$ f $} (C);
	\draw[transform canvas={yshift=-3pt},->] (L) to node[swap] {$ g$} (C);
	\draw[->] (C) to node {$ p $} (R);
}.  \]
The underlying map $ p \colon Y \to Z $ is the coequalizer of $ f,g \colon X \rightrightarrows Y $ in $ \mathbf{Sp} $.
Put $ h := pf = pg $.
For any open subset $ W \subseteq Z $ (i.e.\ $ p^{-1}W \subseteq Y $ is open), the $ T_0 $-model $ RW $ of sections of the structure sheaf $ R $
is defined to be the equalizer in $ T_0\mhyphen\mathbf{Mod} $
\[ \tikz[auto]{
	\node (L) at (0,0) {$ RW $};
	\node (C) at (3,0) {$ Q(p^{-1}W) $};
	\node (R) at (6,0) {$ P(h^{-1}W) $};
	\draw[->] (L) to node {$ p^{\sharp}_W $} (C);
	\draw[transform canvas={yshift=3pt},->] (C) to node {$ (f^{\sharp})_{p^{-1}W} $} (R);
	\draw[transform canvas={yshift=-3pt},->] (C) to node[swap] {$ (g^{\sharp})_{p^{-1}W} $} (R);
}.  \]
Then $ R $ is indeed a sheaf of $ T_0 $-models, and we have a morphism $ p \colon (Y,Q) \to (Z,R) $ of $ T_0 $-modelled spaces.

\begin{Claim}
	$ p^{\flat} \colon p^*R \to Q $ is admissible.
\end{Claim}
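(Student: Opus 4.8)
The plan is to verify admissibility of $p^{\flat}$ stalkwise, invoking \emph{Lemma \ref{lem:conditions-for-being-in-A-ModSp}}(2): it suffices to show that $(p^{\flat})_y \colon (p^*R)_y \to Q_y$ is admissible for every $y \in Y$. Writing $z := p(y)$, we have $(p^*R)_y = R_z = \rlim_{W \ni z} RW$, and since each $RW$ is by definition the equalizer of $Q(p^{-1}W) \rightrightarrows P(h^{-1}W)$ and filtered colimits commute with finite limits in the locally finitely presentable category $T_0\mhyphen\mathbf{Mod}$, an element of $R_z$ is represented by an $f$-$g$-equalized section $s \in Q(p^{-1}W)$ (for some $W \ni z$), i.e. one with $f^{\sharp}(s) = g^{\sharp}(s)$ in $P(h^{-1}W)$. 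The map $(p^{\flat})_y$ sends the class of such an $s$ to the germ of $s$ at $y$ in $Q_y$.

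I would then check the unique lifting property against each generating map $k \colon M_{\varphi} \to M_{\psi}$ of $\mathcal{V}$ (i.e. each $(\varphi,\psi) \in \Lambda$), as in the description before \emph{Lemma \ref{lem:adm-map-reflect-T-models}}. So suppose $\bvec{a} \colon M_{\varphi} \to R_z$ and $\bvec{b} \colon M_{\psi} \to Q_y$ are given with $(p^{\flat})_y \circ \bvec{a} = \bvec{b} \circ k$; I seek a unique filler $M_{\psi} \to R_z$. Using finite presentability of $M_{\varphi}$ and $M_{\psi}$, represent $\bvec{a}$ by an equalized section $\tilde{\bvec{a}} \in \dbracket{\varphi}_{Q(p^{-1}W)}$ and $\bvec{b}$ by a section of $\dbracket{\psi}_Q$ near $y$; the compatibility says these agree, through $k$, on the germ at $y$.

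The heart of the argument is the following propagation mechanism, which is where admissibility of $f^{\flat}$ and $g^{\flat}$ enters. Because $p$ is the coequalizer in $\mathbf{Sp}$, the fibre $p^{-1}(z)$ is the equivalence class of $y$ for the relation generated by $f(x) \sim g(x)$. Given a lift $\bvec{b}_{y'} \in \dbracket{\psi}_{Q_{y'}}$ of the germ $\tilde{\bvec{a}}_{y'}$ at a point $y' = f(x)$ with $h(x) = z$, set $c := (f^{\flat})_x(\bvec{b}_{y'}) \in \dbracket{\psi}_{P_x}$; then $c \circ k = (f^{\flat})_x(\tilde{\bvec{a}}_{f(x)}) = (g^{\flat})_x(\tilde{\bvec{a}}_{g(x)})$, the last equality being the germ at $x$ of the equalizer condition on $\tilde{\bvec{a}}$. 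Admissibility of $(g^{\flat})_x$ (valid by \emph{Lemma \ref{lem:conditions-for-being-in-A-ModSp}}(2), as $f,g \in \modsp{\mathbb{A}}{\mathbf{Sp}}$) then yields a unique $\bvec{b}_{g(x)} \in \dbracket{\psi}_{Q_{g(x)}}$ lifting $\tilde{\bvec{a}}_{g(x)}$ with $(g^{\flat})_x(\bvec{b}_{g(x)}) = c$. Thus a lift at $f(x)$ determines a compatible lift at $g(x)$, and symmetrically; starting from $\bvec{b}$ at $y$ and propagating along chains, I obtain a lift $\bvec{b}_{y'}$ at every point of the fibre, the uniqueness at each step forcing consistency along distinct chains and giving the uniqueness of the eventual filler.

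The main obstacle will be upgrading this family of germ-level lifts $\{\bvec{b}_{y'}\}_{y' \in p^{-1}(z)}$ to a genuine element of $R_z$, i.e. to an $f$-$g$-equalized section of $\dbracket{\psi}_Q$ over some saturated open set $p^{-1}W'$ with $z \in W'$. Here one must (i) glue the germwise lifts into a section over a neighbourhood of the fibre using the sheaf condition, the lifts agreeing on overlaps by the uniqueness just established; (ii) check that this section is equalized, its two images in $\dbracket{\psi}_P$ agreeing on the germs over $h^{-1}(z)$ by construction and hence on a neighbourhood by separatedness; and (iii) descend to a neighbourhood of the form $p^{-1}W'$, which is the delicate point, since $p$ is merely a quotient map and one must locate a saturated neighbourhood of the fibre inside the neighbourhood produced in (i)--(ii). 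I expect steps (i)--(iii), rather than the purely algebraic lifting, to demand the most care, exactly as in the analogous verification for $\mathbf{LRS}$ in \cite{Gillam2011}; the uniqueness half of the lifting property is comparatively routine, following from the uniqueness in the propagation together with separatedness of $R$.
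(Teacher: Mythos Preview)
Your setup and the propagation mechanism are sound and essentially match the heart of the paper's argument. The gap is in your final phase, specifically step (iii): for a general quotient map $p$, a neighbourhood of a fibre need not contain any saturated neighbourhood, so the plan ``glue over a neighbourhood of the fibre, then shrink to something of the form $p^{-1}W'$'' can fail outright. The paper avoids this by reversing the order of operations.

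Using spatiality, put $\theta \equiv \exists\bvec{v}\,\psi$; then $\{\bvec{uv}.\,\psi\} \to \{\bvec{u}.\,\varphi\}$ is monic and isomorphic to the subobject $\{\bvec{u}.\,\theta\} \rightarrowtail \{\bvec{u}.\,\varphi\}$, so the lifting problem reduces to showing $R_z \models \theta(\bvec{s}_z)$ once $Q_y \models \theta(p^{\flat}_y(\bvec{s}_z))$. For a representing equalized section $\bvec{s} \in RW$, the paper considers
\[
V := \{\, y' \in p^{-1}W \mid Q_{y'} \models \theta\bigl(p^{\sharp}_W(\bvec{s})_{y'}\bigr) \,\}.
\]
This $V$ is \emph{open} because $\theta$ is a $T_0$-cartesian (hence coherent) formula, so its stalkwise truth locus is open; and your propagation argument, applied at an arbitrary point of $p^{-1}W$ rather than only at points of the single fibre $p^{-1}(z)$, is exactly the verification that $V$ is \emph{saturated}: $f(x) \in V \iff g(x) \in V$ for all $x \in h^{-1}W$. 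Hence $V = p^{-1}W'$ for the open $W' := p(V) \ni z$, and no shrinking is needed: the saturated open set is produced directly as the locus where a lift exists, not extracted after the fact from an unstructured neighbourhood of the fibre.

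Your step (ii) also leans on spatiality more than you indicate. Once $V$ is in hand, take any witness $\bvec{t} \in QV$ with $QV \models \psi(p^{\sharp}_W(\bvec{s})|_V,\bvec{t})$; then $f^{\sharp}_V(\bvec{t})$ and $g^{\sharp}_V(\bvec{t})$ are both $\psi$-witnesses for the same tuple $h^{\sharp}_W(\bvec{s})|_{h^{-1}W'}$ in $P(h^{-1}W')$, and the spatiality hypothesis forces them to coincide. So $\bvec{t}$ descends to $RW'$ by the equalizer definition of $R$, giving the required lift; uniqueness is, as you say, immediate from spatiality.
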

\begin{proof}
	We would like to show that, for any $ (\varphi(\bvec{u}),\psi(\bvec{u},\bvec{v})) \in \Lambda $ and any point $ y \in Y $ with $ z := py $, the diagram
	\[ \tikz[auto]{
		\node (UL) at (0,2) {$ \dbracket{\psi}_{R_z} $};
		\node (UR) at (2,2) {$ \dbracket{\psi}_{Q_y} $};
		\node (DL) at (0,0) {$ \dbracket{\varphi}_{R_z} $};
		\node (DR) at (2,0) {$ \dbracket{\varphi}_{Q_y} $};
		\draw[->] (UL) to node {$ p^{\flat}_y $} (UR);
		\draw[->] (UL) to (DL);
		\draw[->] (DL) to node {$ p^{\flat}_y $} (DR);
		\draw[->] (UR) to (DR);
	}  \]
	is a pullback.
	Put $ \theta \equiv \exists \bvec{v} \psi $.
	By the spatiality assumption for $ (T_0,T,\Lambda) $,
	the morphism $ \{\bvec{uv}.\,\psi\} \to \{\bvec{u}.\,\varphi\} $ in $ \mathcal{C}_{T_0} $ is monic,
	and it is isomorphic to the subobject $ \{\bvec{u}.\, \theta \} \rightarrowtail \{\bvec{u}.\,\varphi\} $.
	Therefore, assuming that $ R_z \models \varphi(\bvec{s}_z) $ and $ Q_y \models \theta(p^{\flat}_y(\bvec{s}_z)) $
	for an open set $ W \ni z $ and local sections $ \bvec{s} \in RW $,
	it suffices to show that $ R_z \models \theta(\bvec{s}_z) $.
	
	Let us consider the set
	\[ V := \left \{ y' \in p^{-1}W \smid Q_{y'} \models \theta(p^{\sharp}_W(\bvec{s})_{y'}) \right \}. \]
	By $ p^{\flat}_y(\bvec{s}_z) = p^{\sharp}_W(\bvec{s})_{y} \;$, $ V $ contains $ y $.
	It is open, and we have $ QV \models \theta(p^{\sharp}_W(\bvec{s})|_V) $
	since forcing values and discrete values in the sense of \cite{Ara2021} coincide for any cartesian formula (or, more generally, any coherent formula).
	Moreover, $ V $ is closed under the equivalence relation on $ Y $ used to define the quotient $ Z $.
	Indeed, for $ x \in h^{-1}W $,
	\begin{align*}
		fx \in V & \iff Q_{fx} \models \theta(p^{\sharp}_W(\bvec{s})_{fx}), \\
		& \iff P_{x} \models \theta(h^{\sharp}_W(\bvec{s})_{x}), \\
		& \iff Q_{gx} \models \theta(p^{\sharp}_W(\bvec{s})_{gx}), \\
		& \iff gx \in V.
	\end{align*}
	The second and third equivalences follow from admissibility of $ f^{\flat}_x \colon Q_{fx} \to P_x $ and $ g^{\flat}_x \colon Q_{gx} \to P_x $.
	Thus, $ W':=pV \subseteq W $ is open and contains $ z $.
	
	Now we have $ QV \models \exists \bvec{v}\psi(p^{\sharp}_W(\bvec{s})|_{V}, \bvec{v}) $.
	Take local sections $ \bvec{t} \in QV $ so that $ QV \models \psi(p^{\sharp}_W(\bvec{s})|_{V}, \bvec{t}) $.
	Then
	\[ P(h^{-1}W') \models \psi(h^{\sharp}_W(\bvec{s})|_{h^{-1}W'},f^{\sharp}_V(\bvec{t})) \land \psi(h^{\sharp}_W(\bvec{s})|_{h^{-1}W'},g^{\sharp}_V(\bvec{t})), \]
	and we obtain $ f^{\sharp}_V(\bvec{t}) = g^{\sharp}_V(\bvec{t}) $ by the spatiality assumption.
	Since $ p^{\sharp}_{W'} $ is an equalizer of $ f^{\sharp}_V $ and $ g^{\sharp}_V $,
	there exists a (unique) tuple $ \bvec{t'} \in RW' $ such that $ p^{\sharp}_{W'}(\bvec{t'}) = \bvec{t} $.
	Then $ QV \models \psi(p^{\sharp}_{W'}(\bvec{s}|_{W'}), p^{\sharp}_{W'}(\bvec{t'})) $, and $ RW' \models \psi(\bvec{s}|_{W'},\bvec{t'}) $.
	Therefore, $ R_z \models \psi(\bvec{s}_z,\bvec{t'}_z) $ as desired.
\end{proof}

By \emph{Lemma \ref{lem:conditions-for-being-in-A-ModSp}}, $ p^*R $ is a $ T $-model in $ \mathbf{Sh}(Y) $.
Then $ R $ is a $ T $-model in $ \mathbf{Sh}(Z) $ since $ p $ is surjective.
We have shown that $ (Z,R) $ is a $ T $-modelled space and that $ p \colon (Y,Q) \to (Z,R) $ is a morphism in $ \modsp{\mathbb{A}}{\mathbf{Sp}} $.
The proof of the universality of a coequalizer is straightforward.
We have finished the construction of the coequalizer.

\begin{Thm*}
	$ \modsp{\mathbb{A}}{\mathbf{Sp}} $ has all small colimits.
\end{Thm*}

We now exhibit examples showing that spatiality and admissibility are indispensable for creation of coequalizers.

\begin{Expl}
	Regarding the Zariski context, let us consider the following non-admissible morphisms of locally ringed spaces:
	\[ \tikz[auto]{
		\node (L) at (0,0) {$ (\{*\},\mathbb{Q}) $};
		\node (C) at (4,0) {$ (2,\mathbb{Z}_{(p)} \times \mathbb{Z}_{(q)}) $};
		\draw[transform canvas={yshift=3pt},->] (L) to node {$ f $} (C);
		\draw[transform canvas={yshift=-3pt},->] (L) to node[swap] {$ g$} (C);
	},  \]
	where $ 2 $ is the discrete two-point space, $ p,q $ are distinct prime numbers, 
	$ \mathbb{Z}_{(p)} $ (resp.\ $ \mathbb{Z}_{(q)} $) is the localization of $ \mathbb{Z} $ by the prime ideal $ p\mathbb{Z} $ (resp.\ $ q\mathbb{Z} $),
	$ f(*)=0 \in 2 $, $ g(*)=1 \in 2 $, and $ f^{\flat}_* \colon \mathbb{Z}_{(p)} \to \mathbb{Q} , g^{\flat}_* \colon \mathbb{Z}_{(q)} \to \mathbb{Q} $ are the inclusions.
	Then the coequalizer in the category of ringed spaces is (isomorphic to) the single-point space equipped with the non-local ring $ \mathbb{Z}_{(p)} \cap \mathbb{Z}_{(q)} \subseteq \mathbb{Q} $.
	Therefore, the category of locally ringed spaces and (not necessarily admissible) morphisms of ringed spaces is not closed under coequalizers of ringed spaces.
\end{Expl}

\begin{Expl}
	We consider the non-spatial Coste context $ (T_0,T_0, \Lambda) $ where $ T_0 $ is the theory of commutative rings, and $ \Lambda = \{(u=u,uv=0)\} $.
	A homomorphism $ \alpha \colon A \to B $ is admissible if it satisfies the condition
	\[ \forall a \in A,\,\forall b \in B,\, [\alpha(a)b=0 \implies \exists! a' \in A,\, aa'=0 \;\text{and}\; \alpha(a')=b]. \]
	Putting $ a=0 $, we can see that admissible homomorphisms are exactly isomorphisms.
	We exhibit a parallel pair in $ \modsp{\mathbb{A}}{\mathbf{Sp}} $ whose coequalizer as ringed spaces does not belong to that category.
	
	Let $ A $ be the quotient ring $ \mathbb{Z}[X]/\langle X^2 -X \rangle $ of the single-variable polynomial ring, and $ (Y,Q) $ be the Pierce spectrum of $ A $
	(cf.\ \emph{Example \ref{expl:contexts-for-rings}}).
	Concretely, $ Y $ consists of three points $ f(*)$, $g(*)$, and $ \star $ where $ f(*),g(*) $ will be the images of maps $ f,g \colon \{*\} \rightrightarrows Y $, respectively.
	The topology on $ Y $ is $ \{\varnothing,Y,\{f(*)\},\{g(*)\},\{f(*),g(*)\}\} $, i.e.\ the finest topology so that $ Y $ is the only open set containing $ \star $.
	The sheaf $ Q $ on $ Y $ has the stalks
	\[ Q_{f(*)} \cong Q(\{f(*)\}) \cong A/ \langle \bar{X} \rangle, \quad Q_{g(*)} \cong Q(\{g(*)\}) \cong A/ \langle \overline{1-X} \rangle, \quad Q_{\star} \cong QY \cong A. \]
	We define morphisms $ f,g \colon (\{*\},\mathbb{Z}) \rightrightarrows (Y,Q) $ as follows:
	$ f^{\sharp}_Y, g^{\sharp}_Y \colon A \rightrightarrows \mathbb{Z} $ are the homomorphisms substituting $ 0 $ and $ 1 $, respectively, for $ X $.
	Then the admissible isomorphism $ f^{\flat}_* \colon A/ \langle \bar{X} \rangle \isoarrow \mathbb{Z} $ is induced. Similarly, for $ g^{\flat}_* \colon A/\langle \overline{1-X} \rangle \isoarrow \mathbb{Z}$.
	
	When we take its coequalizer $ p \colon (Y,Q) \to (Z,R) $ in the category $ \mathbf{RS} $ of ringed spaces,
	we can see that $ Z $ is the Sierpinski space $ \{\diamondsuit,\heartsuit \} $ whose only non-trivial open set is $ \{\diamondsuit\} $.
	The underlying map of $ p $ is determined by the equations $ p(f(*))=p(g(*)) = \diamondsuit,\, p(\star) = \heartsuit $.
	$ R(Z)$ is an equalizer of $ f^{\sharp}_Y, g^{\sharp}_Y $, and thus it is isomorphic to $ \mathbb{Z} $.
	However, $ p^{\flat}_{\star} \colon R_{\heartsuit} \cong R(Z) \cong \mathbb{Z} \rightarrowtail A $ is not admissible!
\end{Expl}

\section{Spectra of Modelled Spaces} \label{sec:spectra}
Our goal in this section is the following theorem:
\begin{Thm*} \label{thm:spectra-of-modelled-spaces}
	Let $ (T_0,T,\Lambda) $ be a spatial Coste context.
	For any $ T $-modelled space $ (X,P) $, the forgetful functor between the slice categories
	\[ \modsp{\mathbb{A}}{\mathbf{Sp}}/(X,P) \to \modsp{T_0}{\mathbf{Sp}}/(X,P) \]
	has a right adjoint $ \Spec $.
\end{Thm*}
Gillam \cite{Gillam2011} and \mbox{Brandenburg} \cite{BrandLimits} discuss Zariski spectra of ringed spaces in relation to limits 
in the category $ \mathbf{LRS} $ of locally ringed spaces.
Our arguments are largely inspired by \cite{BrandLimits} but rather involved.
We point out that Blechschmidt \cite[Chapter 12]{BlechThesis} constructs relative Zariski spectra of ringed locales from an internal point of view
and compares his construction with Gillam's and Cole's.
In the next section, relative spectra of $ T_0 $-modelled spaces will be used to construct limits in $ \modsp{\mathbb{A}}{\mathbf{Sp}} $.

Osmond \cite{Osm2020b} discusses spectra of modelled spaces for \textit{Diers contexts}.
His construction should be tightly related to our results, but it is unclear to the author how exact the relationship is.
Notice that he uses the algebraic convention for morphisms of modelled spaces while we use the geometric one.

Whenever no confusion arises, for a morphism $ f \colon (Y,Q) \to (X,P) $ in $ \modsp{T_0}{\mathbf{Sp}} $ with $ (X,P) \in \modsp{\mathbb{A}}{\mathbf{Sp}} $,
we will write $ \Spec(Y,Q) $ for the domain of $ \Spec(f) $.
We will give the construction of $ \Spec(Y,Q) \to (X,P) $. 

\paragraph{The space $ \Spec(Y,Q) $}
For each point $ y \in Y $, the homomorphism $ f^{\flat}_y \colon P_{fy} \to Q_y $ induces $ \tilde{f^{\flat}_y} \colon \Spec(Q_y) \to \Spec(P_{fy}) $ as we described in \emph{Proposition \ref{prop:hom-induces-morph-bw-Spec}}.
Since $ P_{fy} \models T $, we have the distinguished ideal $ \{\id_{P_{fy}}\} \in \Spec(P_{fy}) $.
The underlying set of $ \Spec(Y,Q) $ is defined to be the set
\[ \Set{(y,I) \smid y \in Y,\, I \in \Spec(Q_y) \;\text{such that}\; \tilde{f^{\flat}_y}(I)=\{\id_{P_{fy}}\} }. \]
The condition $ \tilde{f^{\flat}_y}(I)=\{\id_{P_{fy}}\} $ can be rephrased as
\[ \forall \lambda \in \mathcal{V}_{P_{fy}},\, \left [ (f^{\flat}_y)_*(\lambda) \in I \implies \lambda = \id_{P_{fy}} \right ]. \]
For each $ V \in \mathcal{O}(Y) $ and $ \lambda \in \mathcal{V}_{QV} $, we define a subset
\[ D(V,\lambda) := \Set{(y,I) \in \Spec(Y,Q) \smid y \in V \;\text{and}\; (QV \to Q_y)_*(\lambda) \in I }. \]

\begin{Lem} \label{lem:D(V,lam)-form-a-basis}
	The subsets $ D(V,\lambda) $'s form a basis for $ \Spec(Y,Q) $.
\end{Lem}
\begin{proof}
	Every $ (y,I) \in \Spec(Y,Q) $ is contained in $ D(V,\id_{QV}) $ for any open $ V \ni y $.
	Moreover, for any $ V,V' \in \mathcal{O}(Y),\lambda \in \mathcal{V}_{QV}, $ and $ \lambda' \in \mathcal{V}_{QV'} $, 
	if we write $ \mu \in \mathcal{V}_{Q(V \cap V')}$ (resp.\ $ \mu' $) for the pushout of $ \lambda $ (resp.\ $ \lambda' $) along the restriction map $ QV \to Q(V \cap V') $ (resp.\ $ QV' \to Q(V \cap V') $),
	then we can show
	\[ D(V,\lambda) \cap D(V',\lambda') = D(V \cap V', \mu \vee \mu'). \]
	Indeed, for any $ y \in V \cap V' $, we have a rectangular diagram
	\[ \tikzset{cross/.style={preaction={-,draw=white,line width=6pt}}}
	\tikz{
		\node (QVV') at (0,0,0) {$ Q(V \cap V') $};
		\node (Qy) at (4,0,0) {$ Q_y $};
		\path (QVV') ++(-1,-2,1) node[circle,name=mu] {$ \bullet $};
		\path (QVV') ++(1,-2,-1) node[circle,name=mu'] {$ \bullet $};
		\path (QVV') ++(0,-4,0) node[circle,name=mumu'] {$ \bullet $};
		\path (Qy) ++(-1,-2,1) node[circle,name=nu] {$ \bullet $};
		\path (Qy) ++(1,-2,-1) node[circle,name=nu'] {$ \bullet $};
		\path (Qy) ++(0,-4,0) node[circle,name=nunu'] {$ \bullet $};
		\draw (QVV') to node[above left] {$ \mu $} (mu);
		\draw[->] (QVV') to node[above right] {$ \mu' $} (mu');
		\draw[dashed,->] (QVV') to node[fill=white,pos=0.4] {$ \mu \vee \mu' $} (mumu');
		\draw[->] (mu) --(mumu');
		\draw[->] (mu') to (mumu');
		
		\draw[->] (QVV') to (Qy);
		\draw[->,cross] (mu) to (nu);
		\draw[->] (mu') to (nu');
		\draw[->] (mumu') to (nunu');
		
		\draw[->,cross] (Qy) to node[above left] {$ \nu $} (nu);
		\draw[->] (Qy) to node[above right] {$ \nu' $} (nu');
		\draw[dashed,->,cross] (Qy) to node[fill=white] {$ \nu \vee \nu' $} (nunu');
		\draw[->] (nu) to (nunu');
		\draw[->] (nu') to (nunu');
	}, \]
	where each face is a pushout.
	Then $ (y,I) \in D(V,\lambda) $ iff $ \nu \in I $, and similarly for $ D(V',\lambda') $ and $ \nu' $,
	while $ (y,I) \in D(V \cap V', \mu \vee \mu') $ iff $ \nu \vee \nu' \in I $.
\end{proof}

\paragraph{The structure sheaf on $ \Spec(Y,Q) $}
We define a sheaf $ S $ of $ T_0 $-models on $ \Spec(Y,Q) $ as follows.
For any open set $ W $ of $ \Spec(Y,Q) $, the $ T_0 $-model of local sections on $ W $ is defined to be
\[ S(W) := \Set{s \in \prod_{(z,J) \in W} (Q_z)_J \smid 
	\begin{aligned}
		\forall (y,I) \in W,\, \exists D(V,\lambda) \;\text{with}\; (y,I) \in D(V,\lambda) \subseteq W, \\
		\exists t \in (QV)_{\lambda},\, \forall (z,J) \in D(V,\lambda),\, s_{(z,J)} = t_{(z,J)}
	\end{aligned}
}, \]
where $ s_{(z,J)} $ is the $ (z,J) $-th component of $ s $, and $ t_{(z,J)} $ is the image of $ t $ under the composite of the bottom line below:
\[ \tikz[auto]{
	\node (UL) at (0,2) {$ QV $};
	\node (UR) at (2,2) {$ Q_z $};
	\node (DL) at (0,0) {$ (QV)_{\lambda} $};
	\node (DR) at (2,0) {$ \bullet $};
	\node (Out) at (4,0) {$ (Q_z)_J $};
	\draw[->] (UL) to node {$ \lambda $} (DL);
	\draw[->] (UL) to (UR);
	\draw[->] (DL) to (DR);
	\draw[->] (UR) to node[right] {$ \in J $} (DR);
	\draw[bend left=10pt,->] (UR) to (Out);
	\draw[->] (DR) to (Out);
	\node (po) at (1.5,0.5) {$ \ulcorner $};
}.  \]
Since $ S(W) $ is defined locally, this is indeed a sheaf.
Hereafter, we will often use the symbol $ \bigstar $ in diagrams to denote unnamed finitely presentable models.

\begin{Prop}
	The stalk $ S_{(y,I)} $ is isomorphic to the $ T $-model $ (Q_y)_I $.
	Thus, $ (\Spec(Y,Q),S) $ is a $ T $-modelled space.
\end{Prop}
\begin{proof}
	We have the canonical homomorphism $ S_{(y,I)} \to (Q_y)_I $ sending the germ of $ s $ at $ (y,I) $ to $ s_{(y,I)} $.
	We will show that this homomorphism is a surjective embedding.
	The proof goes analogously to \emph{Lemma \ref{lem:stalk-of-structure-sheaf}} but involves many modifications.
	
	\vspace{5pt}
	\noindent\emph{Surjectivity}:
	For $ \mu \in I $ and $ r \in (Q_y)_{\mu} $, we write $ r_I $ for the image of $ r $ under $ (Q_y)_{\mu} \to (Q_y)_I $.
	Since $ (Q_y)_I $ is the filtered colimit $ \rlim_{\mu \in I} (Q_y)_{\mu} $,
	any element of $ (Q_y)_I $ is of the form $ r_I $ for some $ \mu \in I $ and $ r \in (Q_y)_{\mu} $.
	Using the lifting lemma to the homomorphism $ r \colon M_{(u=u)} \to (Q_y)_{\mu} $, we can take a diagram witnessing $ \mu \in \mathcal{V}_{Q_y} $
	\[ \tikz[auto]{
		\node (UL) at (0,2) {$ \bigstar $};
		\node (UR) at (2,2) {$ Q_y $};
		\node (DL) at (0,0) {$ \bigstar $};
		\node (DR) at (2,0) {$ (Q_y)_{\mu} $};
		\draw[->] (UL) to (UR);
		\draw[->] (UL) to node[left] {$ \mathcal{V} \ni k $} (DL);
		\draw[->] (DL) to node {$ n $} (DR);
		\draw[->] (UR) to  node {$ \mu $} (DR);
		\node (po) at (1.5,0.5) {$ \ulcorner $};
	}  \]
	such that $ r $ is in the image of $ n $.
	Since $ Q_y $ is the filtered colimit $ \rlim_{V \ni y} QV $, the above square can be factored into
	\[ \tikz[auto]{
		\node (UL) at (0,2) {$ \bigstar $};
		\node (UC) at (2,2) {$ QV $};
		\node (UR) at (4,2) {$ Q_y $};
		\node (DL) at (0,0) {$ \bigstar $};
		\node (DC) at (2,0) {$ (QV)_{\lambda} $};
		\node (DR) at (4,0) {$ (Q_y)_{\mu} $};
		\draw[->] (UL) to node {$ k $} (DL);
		\draw[->] (UL) to (UC);
		\draw[->] (DL) to (DC);
		\draw[->] (UC) to node {$ \lambda $} (DC);
		\draw[->] (UC) to (UR);
		\draw[->] (DC) to (DR);
		\draw[->] (UR) to node {$ \mu $} (DR);
		\node (po1) at (1.5,0.5) {$ \ulcorner $};
		\node (po2) at (3.5,0.5) {$ \ulcorner $};
	}.  \]
	By the choice of $ n $, there exists $ t \in (QV)_{\lambda} $ such that $ t \mapsto r $ under $ (QV)_{\lambda} \to (Q_y)_{\mu} $.
	Then $ (y,I) \in D(V,\lambda) $, and $ t $ (considered as an element of $S(D(V,\lambda)) $) has the germ $ r_I $ at $ (y,I) $.
	This shows the homomorphism $ S_{(y,I)} \to (Q_y)_I $ is surjective.
	
	\vspace{5pt}
	\noindent\emph{Embedding}:
	We will show that the homomorphism $ S_{(y,I)} \to (Q_y)_I $ reflects the interpretations of a binary relation symbol $ \rho $.
	The proof for the other relation symbols and the equality $ = $ goes similarly.
	Suppose that $ (Q_y)_I \models \rho(s_{(y,I)}, s'_{(y,I)}) $ for $ s,s' \in S(W) $ and $(y,I) \in W $.
	We would like to find a basic open $ D(V_0,\lambda_0) $ such that $ (y,I) \in D(V_0,\lambda_0) \subseteq W $ and 
	$ S(D(V_0,\lambda_0)) \models \rho(s|_{D(V_0,\lambda_0)},s'|_{D(V_0,\lambda_0)}) $.
	By the definition of $ S(W) $, we can find basic open sets $ D(V,\lambda), D(V',\lambda') $ and elements $ t \in (QV)_{\lambda}, t' \in (QV')_{\lambda'} $ such that
	\[ \begin{cases}
		(y,I) \in D(V,\lambda) \cap D(V',\lambda'),\, D(V,\lambda) \cup D(V',\lambda') \subseteq W, \\
		\forall (z,J) \in D(V,\lambda),\, s_{(z,J)} = t_{(z,J)}, \\
		\forall (z,J) \in D(V',\lambda'),\, s'_{(z,J)} = t'_{(z,J)}.
	\end{cases} \]
	By the argument in the proof of \emph{Lemma \ref{lem:D(V,lam)-form-a-basis}},
	we may assume that $ V=V' $ and $ \lambda = \lambda' $.
	Let us consider the diagram
	\[ \tikz{
		\node (UL) at (0,2) {$ QV $};
		\node (UR) at (2,2) {$ Q_y $};
		\node (DL) at (0,0) {$ (QV)_{\lambda} $};
		\node (DR) at (2,0) {$ (Q_y)_{\mu} $};
		\node (Out) at (4,0) {$ (Q_y)_I $};
		\draw[->] (UL) to node[right] {$ \lambda $} (DL);
		\draw[->] (UL) to (UR);
		\draw[->] (DL) to (DR);
		\draw[->] (UR) to node[right] {$ \mu \in I $} (DR);
		\draw[bend left=10pt,->] (UR) to (Out);
		\draw[->] (DR) to (Out);
		\node (po) at (1.5,0.5) {$ \ulcorner $};
		
		\node (BL) at (0,-1) {$ t,t' $};
		\node (BR) at (2,-1) {$ t_{\mu},t'_{\mu} $};
		\node (BOut) at (4,-1) {$ t_{(y,I)},t'_{(y,I)} $};
		\draw[|->] (BL) to (BR);
		\draw[|->] (BR) to (BOut);
		\path (BL) to node[sloped] {$ \in $} (DL);
		\path (BR) to node[sloped] {$ \in $} (DR);
		\path (BOut) to node[sloped] {$ \in $} (Out);
	}.  \]
	Since $ (Q_y)_I \models \rho(t_{(y,I)}, t'_{(y,I)}) $, there exists $ \mu_0 \in I $ such that $ \mu_0 \geq \mu $ and $ (Q_y)_{\mu_0} \models \rho(r,r') $,
	where $ r,r' $ are respectively the images of $ t_{\mu},t'_{\mu} $ under the homomorphism $ (Q_y)_{\mu} \to (Q_y)_{\mu_0} $.
	Then the following claim is all we need to show:
	\begin{Claim}
		There exist $ V_0 \subseteq V $ with $ y \in V_0 $, $ \lambda_0 \in \mathcal{V}_{QV_0} $, 
		$ t_0,t'_0 \in (QV_0)_{\lambda_0} $ with $ (QV_0)_{\lambda_0} \models \rho(t_0,t'_0) $, and a commutative diagram
		\[ \tikz{
			\node (UL) at (0,2) {$ QV $};
			\node (UR) at (3,2) {$ QV_0 $};
			\node (UOut) at (6,2) {$ Q_y $};
			\node (DL) at (0,0) {$ (QV)_{\lambda} $};
			\node (DR) at (3,0) {$ (QV_0)_{\lambda_0} $};
			\node (DOut) at (6,0) {$ (Q_y)_{\mu_0} $};
			\draw[->] (UL) to node[right] {$ \lambda $} (DL);
			\draw[->] (UL) to (UR);
			\draw[->] (DL) to (DR);
			\draw[->] (UR) to node[right] {$ \lambda_0 $} (DR);
			\draw[->] (UR) to (UOut);
			\draw[->] (DR) to (DOut);
			\draw[->] (UOut) to node[right] {$ \mu_0 $} (DOut);
			\node (po) at (5.5,0.5) {$ \ulcorner $};
			
			\node (BL) at (0,-1) {$ t,t' $};
			\node (BR) at (3,-1) {$ t_0,t'_0 $};
			\node (BOut) at (6,-1) {$ r,r' $};
			\draw[|->] (BL) to (BR);
			\draw[|->] (BR) to (BOut);
			\path (BL) to node[sloped] {$ \in $} (DL);
			\path (BR) to node[sloped] {$ \in $} (DR);
			\path (BOut) to node[sloped] {$ \in $} (DOut);
		},  \]
		where the left square need not be a pushout. 
		In particular, $ (y,I) \in D(V_0,\lambda_0) $. 
		
		\ \hfill $ \square $
	\end{Claim}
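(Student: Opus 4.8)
The plan is to descend the refinement $\mu_0$ from the stalk $Q_y$ down to a finite stage $V_0\ni y$, carrying $\lambda$ along with it, and then to push the relation $\rho$ back down by a filtered-colimit argument. First I would record that $\mu\le\mu_0$ in the join-semilattice $\mathcal{V}_{Q_y}$, so the canonical map $(Q_y)_\mu\to(Q_y)_{\mu_0}$ lies in $\mathcal{V}_{(Q_y)_\mu}$ by \emph{Lemma \ref{lem:closure-properties-of-V_A}}(2); I would fix a witnessing pushout of some $k\in\mathcal{V}$ along a morphism $M\to(Q_y)_\mu$ with $M,N$ finitely presentable. Exactly as in the surjectivity part above, write $(Q_y)_\mu$ as the filtered colimit $\rlim_{V'\ni y,\,V'\subseteq V}(QV')_{\lambda_{V'}}$, where $\lambda_{V'}\in\mathcal{V}_{QV'}$ is the pushout of $\lambda$ along the restriction $QV\to QV'$ (this presentation follows because pushouts commute with the filtered colimit $Q_y=\rlim_{V'}QV'$). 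Since $M$ is finitely presentable, $M\to(Q_y)_\mu$ factors through some coprojection $(QV_0)_{\lambda_{V_0}}\to(Q_y)_\mu$.

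Next I would form $B$, the pushout of $k$ along $M\to(QV_0)_{\lambda_{V_0}}$, and set $\lambda_0$ to be the composite $QV_0\xrightarrow{\lambda_{V_0}}(QV_0)_{\lambda_{V_0}}\to B$. Because $\lambda_{V_0}\in\mathcal{V}_{QV_0}$ and $(QV_0)_{\lambda_{V_0}}\to B$ is a pushout of $k\in\mathcal{V}$, \emph{Lemma \ref{lem:closure-properties-of-V_A}}(1) gives $\lambda_0\in\mathcal{V}_{QV_0}$ with $(QV_0)_{\lambda_0}=B$. A double pushout-pasting---first the square presenting $(Q_y)_\mu$ as the pushout of $\lambda_{V_0}$ along $QV_0\to Q_y$, then the $k$-pushout---shows that the right-hand square with corners $QV_0,Q_y,(QV_0)_{\lambda_0},(Q_y)_{\mu_0}$ is a pushout; in particular $(QV_0\to Q_y)_*(\lambda_0)=\mu_0\in I$, so $(y,I)\in D(V_0,\lambda_0)$. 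Taking $t_0,t_0'$ to be the images of $t,t'$ along $(QV)_\lambda\to(QV_0)_{\lambda_{V_0}}\to B$, the pushout comparison $(QV)_\lambda\to(QV_0)_{\lambda_{V_0}}$ makes the left square commute, and commutativity of the whole diagram shows that $t_0,t_0'$ are sent to $r,r'$ by $B\to(Q_y)_{\mu_0}$, so the bottom row commutes on elements.

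It remains to arrange $(QV_0)_{\lambda_0}\models\rho(t_0,t_0')$, and this is the step I expect to be the main obstacle, since a priori we only know $(Q_y)_{\mu_0}\models\rho(r,r')$ at the stalk. The idea is that, because pushout along $k$ again commutes with filtered colimits, $(Q_y)_{\mu_0}$ is the filtered colimit $\rlim_{V'\subseteq V_0}B_{V'}$, where $B_{V'}$ is the pushout of $k$ along $M\to(QV_0)_{\lambda_{V_0}}\to(QV')_{\lambda_{V'}}$ and $B=B_{V_0}$ is the initial term. Since the interpretation $\dbracket{\rho}$ of the atomic formula is computed as a filtered colimit in $\mathbf{Set}$, the pair $(r,r')\in\dbracket{\rho}_{(Q_y)_{\mu_0}}$ is witnessed at some finite stage $B_{V_1'}$; comparing that witness with the images of $t_0,t_0'$, which also map to $(r,r')$, and invoking the essential uniqueness of factorizations (\emph{Lemma \ref{lem:useful-lemma}}), the two agree after shrinking to some $V_0'\subseteq V_1'$. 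Renaming $V_0:=V_0'$ and $\lambda_0,t_0,t_0'$ accordingly then yields $(QV_0)_{\lambda_0}\models\rho(t_0,t_0')$ together with all the commutativity and the pushout property established above (each stable under the shrinking), which is exactly the data demanded by the claim.
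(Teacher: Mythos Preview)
Your argument is correct. Both your proof and the paper's follow the same overall strategy---descend the data at the stalk $Q_y$ to some finite stage $QV_0$ using finite presentability against a filtered-colimit presentation, then shrink further so that $\rho$ is already witnessed there---but the organization differs. The paper first invokes the lifting lemma to exhibit $\lambda$ itself as a pushout of some $k\in\mathcal{V}$ along a map whose image contains $t,t'$, then \emph{re-runs} the internal argument for right cancellation (\emph{Lemma~\ref{lem:closure-properties-of-V_A}}(2)) to build a large explicit diagram with three stacked pushouts of morphisms in $\mathcal{V}$, and finally applies \emph{Lemma~\ref{lem:useful-lemma}} twice to interpolate $QV'$ and then $QV_0$ between the finitely presentable column and $Q_y$. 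By contrast, you use \emph{Lemma~\ref{lem:closure-properties-of-V_A}} as a black box and work instead with the ready-made filtered-colimit presentation $(Q_y)_\mu \cong \rlim_{V'\subseteq V,\,V'\ni y}(QV')_{\lambda_{V'}}$, factoring the finitely presentable $M$ through one term and then pushing out $k$ once; the pushout property of the right square then falls out by pasting rather than by explicit diagram surgery. Your route is a bit more conceptual and avoids tracking the elements $\tau,\tau'$ through several replacements, at the cost of making the interpolation less visibly parallel to the earlier surjectivity argument; the paper's route is more hands-on and keeps everything at the level of explicit pushout squares between finitely presentable objects. The final ``shrink so that $\rho$ holds'' step is essentially identical in both: since $(Q_y)_{\mu_0}$ is the filtered colimit of the $B_{V'}$, the relation $\rho(r,r')$ is witnessed at some stage, and essential uniqueness forces agreement with the images of $t_0,t_0'$ after one more shrinking.
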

	Once we can prove this claim, then $ D(V_0,\lambda_0) \subseteq D(V,\lambda) $ (see the diagram below).
	For any $ (z,J) \in D(V_0,\lambda_0) $, by considering the diagram
	\[ \tikzset{cross/.style={preaction={-,draw=white,line width=6pt}}}
	\tikz{
		\node (QV) at (0,2,0) {$ QV $};
		\node (QV0) at (2,2,2) {$ QV_0 $};
		\node (Qz) at (4,2,0) {$ Q_z $};
		\node (lam) at (0,0,0) {$ \bullet $};
		\node (lam0) at (2,0,2) {$ \bullet $};
		\node (nu) at (4,0,0) {$ \bullet $};
		\node (nu0) at (6,0,2) {$ \bullet $};
		\node (QJ) at (7,0,0) {$ (Q_z)_J $};
		
		\draw[->] (QV) to (Qz);
		\draw[->] (QV) to node[left] {$ \lambda $} (lam);
		\draw[->] (lam) to (nu);
		\draw[->] (Qz) to node[pos=0.3,left] {$ \nu $} (nu);
		\draw[->] (Qz) to (QJ);
		\draw[->] (nu) to (QJ);
		
		\draw[->] (QV) to (QV0);
		\draw[->,cross] (QV0) to node[pos=0.3,left] {$ \lambda_0 $} (lam0);
		\draw[->] (lam) to (lam0);
		\draw[->] (QV0) to (Qz);
		\draw[->] (lam0) to (nu0);
		\draw[->] (nu) to (nu0);
		\draw[->,cross] (Qz) to node[right] {$ \nu_0 \in J $} (nu0);
		\draw[->] (nu0) to (QJ);
		
		\node (po1) at (3.5,0.5,0) {$ \ulcorner $};
		\node[fill=white] (po2) at (5.5,0.3,2) {$ \ulcorner $};
		
		\node (Blam) at (0,-1,0) {$ t,t' $};
		\node (Blam0) at (2,-1,2) {$ t_0,t'_0 $};
		\node (Bnu0) at (6,-1,2) {$ \cdot $};
		\node (BQJ) at (7,-1,0) {$ (t_0)_{(z,J)}, (t'_0)_{(z,J)} $};
		\draw[|->] (Blam) to (Blam0);
		\draw[|->] (Blam0) to (Bnu0);
		\draw[|->] (Bnu0) to (BQJ);
		\path (Blam) to node[sloped] {$ \in $} (lam);
		\path (Blam0) to node[sloped] {$ \in $} (lam0);
		\path (Bnu0) to node[sloped] {$ \in $} (nu0);
		\path (BQJ) to node[sloped] {$ \in $} (QJ);
	}, \]
	we have $ t_{(z,J)} = (t_0)_{(z,J)} $, $ t'_{(z,J)} = (t'_0)_{(z,J)} $, and $ (Q_z)_J \models \rho(t_{(z,J)}, t'_{(z,J)}) $.
	Therefore, $ S(D(V_0,\lambda_0)) \models \rho(s|_{D(V_0,\lambda_0)},s'|_{D(V_0,\lambda_0)}) $ as desired.
	
	We now proceed to the proof of the claim.
	Like the proof of surjectivity, using the lifting lemma, we can find a diagram
	\[ \tikz[auto]{
		\node (UL) at (0,2) {$ \bigstar $};
		\node (UC) at (2,2) {$ QV $};
		\node (UR) at (4,2) {$ Q_y $};
		\node (DL) at (0,0) {$ \bigstar $};
		\node (DC) at (2,0) {$ (QV)_{\lambda} $};
		\node (DR) at (4,0) {$ (Q_y)_{\mu} $};
		\draw[->] (UL) to node {$ k \in \mathcal{V} $} (DL);
		\draw[->] (UL) to (UC);
		\draw[->] (DL) to node {$ n $} (DC);
		\draw[->] (UC) to node {$ \lambda $} (DC);
		\draw[->] (UC) to (UR);
		\draw[->] (DC) to (DR);
		\draw[->] (UR) to node {$ \mu $} (DR);
		\node (po1) at (1.5,0.5) {$ \ulcorner $};
		\node (po2) at (3.5,0.5) {$ \ulcorner $};
	},  \]
	where $ t,t' $ are in the image of $ n $.
	Choose $ \tau,\tau' $ from the preimages of $ t,t' $, respectively.
	By the proof of right cancellation in \emph{Lemma \ref{lem:closure-properties-of-V_A}},
	we can also find a diagram
	\[ \tikzset{cross/.style={preaction={-,draw=white,line width=6pt}}}
	\tikz[auto]{
		\node (k1) at (0,2,0) {$ \bigstar $};
		\node (k2) at (0,0,0) {$ \bigstar $};
		\node (k'1) at (3,2,3) {$ \bigstar $};
		\node (k'2) at (3,0,3) {$ \bigstar $};
		\node (QV) at (3,2,0) {$ QV$};
		\node (lam) at (3,0,0) {$ \bullet $};
		\node (Qy) at (6,2,0) {$ Q_y $};
		\node (mu) at (6,0,0) {$ (Q_y)_{\mu} $};
		\node (k'3) at (3,-2,3) {$ \bigstar $};
		\node (mu0) at (6,-2,0) {$ (Q_y)_{\mu_0} $};
		
		\draw[->] (k1) to (QV);
		\draw[->] (QV) to node[right,pos=0.7] {$ \lambda $} (lam);
		\draw[->] (k1) to node[left] {$ k $} (k2);
		\draw[->] (k2) to node[above,pos=0.3] {$ n $} (lam);
		\draw[->] (QV) to (Qy);
		\draw[->] (lam) to (mu);
		\draw[->] (Qy) to node {$ \mu $} (mu);
		
		\draw[->] (k1) to (k'1);
		\draw[->] (k2) to (k'2);
		\draw[->,cross] (k'1) to (k'2);
		\draw[->,cross] (k'1) to (Qy);
		\draw[->] (k'2) to (mu);
		\draw[->] (k'2) to (k'3);
		\draw[->] (k'3) to (mu0);
		\draw[->] (mu) to (mu0);
		\draw[bend left=40pt,->] (Qy) to node {$ \mu_0 $} (mu0);
	}, \]
	where each vertical arrow of the form $ \bigstar \to \bigstar $ is in $ \mathcal{V} $, and each vertical square is a pushout.
	By applying \emph{Lemma \ref{lem:useful-lemma}} to the upper face,
	we obtain a factorization of the upper front square as
	\[ \tikzset{cross/.style={preaction={-,draw=white,line width=6pt}}}
	\tikz[auto]{
		\node (k1) at (0,2,0) {$ \bigstar $};
		\node (k2) at (0,0,0) {$ \bigstar $};
		\node (k'1) at (3,2,3) {$ \bigstar $};
		\node (k'2) at (3,0,3) {$ \bigstar $};
		\node (QV) at (3,2,0) {$ QV$};
		\node (lam) at (3,0,0) {$ \bullet $};
		\node (QV') at (5,2,1) {$ QV'$};
		\node (lam') at (5,0,1) {$ \bullet $};
		\node (Qy) at (6,2,0) {$ Q_y $};
		\node (mu) at (6,0,0) {$ (Q_y)_{\mu} $};
		
		\draw[->] (k1) to (QV);
		\draw[->] (QV) to node[right,pos=0.7] {$ \lambda $} (lam);
		\draw[->] (k1) to node[left] {$ k $} (k2);
		\draw[->] (k2) to node[above,pos=0.3] {$ n $} (lam);
		\draw[->] (QV) to (Qy);
		\draw[->] (lam) to (mu);
		\draw[->] (Qy) to node {$ \mu $} (mu);
		\draw[->,cross] (QV') to (lam');
		\draw[->] (QV') to (Qy);
		\draw[->] (lam') to (mu);
		
		\draw[->] (k1) to (k'1);
		\draw[->] (k2) to (k'2);
		\draw[->,cross] (k'1) to (k'2);
		\draw[->,cross] (k'1) to (QV');
		\draw[->] (k'2) to (lam');
		\draw[->] (QV) to (QV');
		\draw[->] (lam) to (lam');
	}. \]
	Thus, by replacing $ V$, $\lambda $, etc., we may assume that there exist pushouts
	\[ \tikz{
		\node (UL) at (0,4) {$ \bigstar $};
		\node (UC) at (3,4) {$ QV $};
		\node (UR) at (6,4) {$ Q_y $};
		\node (DL) at (0,2) {$ \bigstar $};
		\node (DC) at (3,2) {$ (QV)_{\lambda} $};
		\node (DR) at (6,2) {$ (Q_y)_{\mu} $};
		\draw[->] (UL) to node[right] {$ k $} (DL);
		\draw[->] (UL) to (UC);
		\draw[->] (DL) to node[above] {$ n $} (DC);
		\draw[->] (UC) to node[right] {$ \lambda $} (DC);
		\draw[->] (UC) to (UR);
		\draw[->] (DC) to (DR);
		\draw[->] (UR) to node[right] {$ \mu $} (DR);
		
		\node (BL) at (0,0) {$ \bigstar $};
		\node (BR) at (6,0) {$ (Q_y)_{\mu_0} $};
		\draw[->] (BL) to (BR);
		\draw[->] (DL) to node[right] {$ l $} (BL);
		\draw[->] (DR) to (BR);
		
		\path (2.5,2.5) node {$ \ulcorner $};
		\path (5.5,2.5) node {$ \ulcorner $};
		\path (5.5,0.5) node {$ \ulcorner $};
		\draw[bend left=40pt,->] (UR) to node[right] {$ \mu_0 $} (BR);
	}.  \]
	Put $ \bar{\tau} := l(\tau),\, \bar{\tau}' := l(\tau') $.
	Then $ \bar{\tau},\bar{\tau}' $ are sent to $ r,r' $, respectively, by the bottom homomorphism.
	Applying the above argument to the outer rectangle, we obtain
	\[ \tikzset{cross/.style={preaction={-,draw=white,line width=6pt}}}
	\tikz{
		\node (UL) at (0,4) {$ \bigstar $};
		\node (UC) at (4,4) {$ QV $};
		\node (UR) at (8,4) {$ Q_y $};
		\node (DL) at (0,2) {$ \bigstar $};
		\node (DC) at (4,2) {$ (QV)_{\lambda} $};
		\node (DR) at (8,2) {$ (Q_y)_{\mu} $};
		
		\node (fUL) at (2,2.8) {$ \bigstar $};
		\node (fDL) at (2,0) {$ \bigstar $};
		\node (fUR) at (6,2.8) {$ QV_0 $};
		\node (fDR) at (6,0) {$ (QV_0)_{\lambda_0} $};
		
		\draw[->] (UL) to node[right] {$ k $} (DL);
		\draw[->] (UL) to (UC);
		\draw[->] (DL) to node[above,pos=0.3] {$ n $} (DC);
		\draw[->] (UC) to node[left,pos=0.2] {$ \lambda $} (DC);
		\draw[->] (UC) to (UR);
		\draw[->] (DC) to (DR);
		\draw[->] (UR) to node[right] {$ \mu $} (DR);
		
		\node (BL) at (0,0) {$ \bigstar $};
		\node (BR) at (8,0) {$ (Q_y)_{\mu_0} $};
		\draw[->] (BL) to node[above] {$ m $} (fDL); 
		\draw[->] (fDL) to (fDR);
		\draw[->] (fDR) to (BR);
		\draw[->] (DL) to node[right] {$ l $} (BL);
		\draw[->] (DR) to (BR);
		
		\draw[->] (UL) to (fUL);
		\draw[->] (UC) to (fUR);
		\draw[->] (fUR) to (UR);
		\draw[->,cross] (fUL) to (fUR);
		\draw[->,cross] (fUL) to (fDL);
		\draw[->,cross] (fUR) to node[right] {$ \lambda_0 $} (fDR);
		\draw[bend left=40pt,->] (UR) to node[right] {$ \mu_0 $} (BR);
	}.  \]
	Here, we may further assume that $ \rho(m(\bar{\tau}), m(\bar{\tau}')) $ holds, for $ (Q_y)_{\mu_0} \models \rho(r,r') $
	and $ \mu_0 $ is the filtered colimit of factorizations of the outer rectangle.
	By the universality of the above left pushout, we have a homomorphism $ (QV)_{\lambda} \to (QV_0)_{\lambda_0} $.
	Let $ t_0,t'_0 $ be the images of $ t,t' $ under $ (QV)_{\lambda} \to (QV_0)_{\lambda_0} $, respectively.
	Then $ \rho(m(\bar{\tau}), m(\bar{\tau}')) $ implies $ (QV_0)_{\lambda_0} \models \rho(t_0,t'_0) $.
	We thus have finished the proof of the claim and the proposition.
\end{proof}

The canonical projection $ \Spec(Y,Q) \to Y $ is continuous because the preimage of $ V \in \mathcal{O}(Y) $ is $ D(V,\id_{QV}) $.
We define a morphism $ g \colon (\Spec(Y,Q),S) \to (Y,Q)$ by putting
\[ g^{\sharp}_V \colon QV \ni t \mapsto \{ t_{(y,I)} \}_{(y,I) \in D(V,\id)} \in S(D(V,\id_{QV})). \]
We can alternatively define $ g $ by putting $ g^{\flat}_{(y,I)} \colon Q_y \to (Q_y)_I \cong S_{(y,I)} $ for each $ (y,I) \in \Spec(Y,Q) $.

\begin{Prop}
	The composite $ fg \colon (\Spec(Y,Q),S) \to (X,P) $ is a morphism in $ \modsp{\mathbb{A}}{\mathbf{Sp}} $, 
	i.e.\ the composite $ (fg)^{\flat}_{(y,I)} \colon P_{fy} \to Q_y \to (Q_y)_I $ is admissible for each $ (y,I) \in \Spec(Y,Q) $.
\end{Prop}
\begin{proof}
	Suppose that there exists a commutative diagram
	\[ \tikz[auto]{
		\node (UL) at (0,3) {$ \bigstar $};
		\node (UR) at (2,3) {$ P_{fy} $};
		\node (DL) at (0,0) {$ \bigstar $};
		\node (DR) at (2,0) {$ (Q_y)_{I} $};
		\node (MR) at (2,1.5) {$ Q_y $};
		\draw[->] (UL) to (UR);
		\draw[->] (UL) to node[left] {$ \mathcal{V} \ni k $} (DL);
		\draw[->] (DL) to node {} (DR);
		\draw[->] (UR) to  node {$ f^{\flat}_y $} (MR);
		\draw[->] (MR) to  node {$ g^{\flat}_{(y,I)} $} (DR);
	}.  \]
	Taking pushouts of $ k $, we obtain
	\[ \tikz[auto]{
		\node (UL) at (0,2) {$ \bigstar $};
		\node (UC) at (2,2) {$ P_{fy} $};
		\node (UR) at (4,2) {$ Q_y $};
		\node (DL) at (0,0) {$ \bigstar $};
		\node (DC) at (2,0) {$ \bullet $};
		\node (DR) at (4,0) {$ \bullet $};
		\node (Out) at (6,0) {$ (Q_y)_I $};
		\draw[->] (UL) to node {$ k $} (DL);
		\draw[->] (UL) to (UC);
		\draw[->] (DL) to (DC);
		\draw[->] (UC) to node {$ \lambda $} (DC);
		\draw[->] (UC) to node {$ f^{\flat}_y $} (UR);
		\draw[->] (DC) to (DR);
		\draw[->] (UR) to node {$ \mu $} (DR);
		\draw[bend left=20pt,->] (UR) to  node {$ g^{\flat}_{(y,I)} $} (Out);
		\draw[->] (DR) to (Out);
		\node (po1) at (1.5,0.5) {$ \ulcorner $};
		\node (po2) at (3.5,0.5) {$ \ulcorner $};
	}.  \]
	By an argument similar to the proof of \emph{Proposition \ref{prop:hom-induces-morph-bw-Spec}},
	we can show $ \mu \in I $.
	By the definition of $ \Spec(Y,Q) $, the morphism $ \lambda $ must be the identity.
	Thus, we obtain the desired filler.
\end{proof}

\paragraph{The Adjunction}

\begin{Prop}
	For any $ k \colon (Z,R) \to (X,P) $ in $ \modsp{\mathbb{A}}{\mathbf{Sp}} $, there exists a bijection between $ \Hom $-sets
	\[ \modsp{T_0}{\mathbf{Sp}}_{/(X,P)}((Z,R),(Y,Q)) \cong \modsp{\mathbb{A}}{\mathbf{Sp}}_{/(X,P)}((Z,R),(\Spec(Y,Q),S)). \]
\end{Prop}
\begin{proof}
	Suppose we have a commutative diagram in $ \modsp{T_0}{\mathbf{Sp}} $
	\[ \tikz[auto]{
		\node (UL) at (0,2) {$ (Z,R) $};
		\node (UR) at (4,2) {$ (Y,Q) $};
		\node (DC) at (2,0) {$ (X,P) $};
		\draw[->] (UL) to node {$ h $} (UR);
		\draw[->] (UL) to node[swap] {$ k $} (DC);
		\draw[->] (UR) to node {$ f $} (DC);
	}. \]
	First, we would like to get a factorization of $ h $ through $ g \colon \Spec(Y,Q) \to Y $ as
	\[ \tikz[auto]{
		\node (UL) at (0,2) {$ Z $};
		\node (UC) at (3,2) {$ \Spec(Y,Q) $};
		\node (UR) at (6,2) {$ Y $};
		\node (DC) at (3,0) {$ X $};
		\draw[dashed,->] (UL) to node {$ \bar{h} $} (UC);
		\draw[->] (UC) to node {$ fg $} (DC);
		\draw[->] (UC) to node {$ g $} (UR);
		\draw[->] (UL) to node[swap] {$ k $} (DC);
		\draw[->] (UR) to node {$ f $} (DC);
		\draw[bend left=30pt,->] (UL) to node {$ h $} (UR);
	}. \]
	Notice that, for each $ z \in Z $, we have a commutative diagram of $ T_0 $-models
	\[ \tikz[auto]{
		\node (UL) at (0,2) {$ R_z $};
		\node (UR) at (4,2) {$ Q_{hz} $};
		\node (DC) at (2,0) {$ P_{kz} $};
		\draw[->] (UR) to node[above] {$ h^{\flat}_z $} (UL);
		\draw[->] (DC) to node {$ k^{\flat}_z $} (UL);
		\draw[->] (DC) to node[swap] {$ f^{\flat}_{hz} $} (UR);
	}. \]
	Let us put $ \bar{h}(z) := (hz,J_z) $, where $ J_z := \tilde{h^{\flat}_z}(\{\id_{R_z}\}) \in \Spec(Q_{hz}) $.
	The reader can easily show that, for any admissible morphism $ \alpha \colon A \to B $ between $ T $-models,
	the induced map $ \tilde{\alpha} $ sends the ideal $ \{\id_B \} $ to $ \{\id_A \} $.
	Since $ k^{\flat}_z $ is admissible, we have $ \bar{h}(z) \in \Spec(Y,Q) $.
	Thus, we obtain a map $ \bar{h} \colon Z \to \Spec(Y,Q) $ between the underlying sets.
	
	To show continuity of $ \bar{h} $, we will prove that $ \bar{h}^{-1}(D(V,\lambda)) $ is open.
	For any point $ w \in h^{-1}V $, it belongs to $ \bar{h}^{-1}(D(V,\lambda)) $ exactly when there exists a factorization 
	\[ \tikz[auto]{
		\node (UL) at (0,2) {$ QV $};
		\node (UC) at (2,2) {$ Q_{hw} $};
		\node (UR) at (4,2) {$ R_w $};
		\node (DL) at (0,0) {$ (QV)_{\lambda} $};
		\draw[->] (UL) to node {$ \lambda $} (DL);
		\draw[->] (UL) to (UC);
		\draw[->] (UC) to node {$ h^{\flat}_w $} (UR);
		\draw[dashed,bend right=30pt,->] (DL) to (UR);
	}.  \]
	Fix $ z \in \bar{h}^{-1}(D(V,\lambda)) $. Then we can find an open set $ W \subseteq \bar{h}^{-1}(D(V,\lambda)) $ containing $ z $ as follows.
	The diagram
	\[ \tikz[auto]{
		\node (UL) at (0,2) {$ QV $};
		\node (UC) at (2,2) {$ Q_{hz} $};
		\node (UR) at (4,2) {$ R_z $};
		\node (DL) at (0,0) {$ (QV)_{\lambda} $};
		\draw[->] (UL) to node {$ \lambda $} (DL);
		\draw[->] (UL) to (UC);
		\draw[->] (UC) to node {$ h^{\flat}_z $} (UR);
		\draw[bend right=30pt,->] (DL) to (UR);
		
		\node (ULL) at (-2,2) {$ \bigstar $};
		\node (DLL) at (-2,0) {$ \bigstar $};
		\draw[->] (ULL) to node {$ k $} (DLL);
		\draw[->] (ULL) to (UL);
		\draw[->] (DLL) to (DL);
		\node (po1) at (-0.5,0.5) {$ \ulcorner $};
	}  \]
	can be rewritten into a commutative square
	\[ \tikz[auto]{
		\node (UL) at (0,2) {$ QV $};
		\node (UC) at (3,2) {$ R(h^{-1}V) $};
		\node (UR) at (3,0) {$ R_z $};
		\node (DL) at (0,0) {$ (QV)_{\lambda} $};
		\draw[->] (UL) to node {$ h^{\sharp}_V $} (UC);
		\draw[->] (UC) to (UR);
		\draw[->] (DL) to (UR);
		
		\node (ULL) at (-2,2) {$ \bigstar $};
		\node (DLL) at (-2,0) {$ \bigstar $};
		\draw[->] (ULL) to node {$ k $} (DLL);
		\draw[->] (ULL) to (UL);
		\draw[->] (DLL) to (DL);
	}.  \]
	By \emph{Lemma \ref{lem:useful-lemma}}, we obtain $ W \subseteq h^{-1}V $ containing $ z $ and a commutative diagram
	\[ \tikz[auto]{
		\node (UL) at (0,2) {$ QV $};
		\node (UC) at (3,2) {$ R(h^{-1}V) $};
		\node (DR) at (3,0) {$ RW $};
		\node (DL) at (0,0) {$ (QV)_{\lambda} $};
		\draw[->] (UL) to node {$ h^{\sharp}_V $} (UC);
		\draw[->] (UC) to (DR);
		\draw[->] (DL) to (DR);
		\draw[->] (UL) to node {$ \lambda $} (DL);
	}.  \]
	This $ W $ has the desired property.
	
	Our next task is giving an admissible morphism $ \bar{h}^{\flat}_z \colon S_{(hz,J_z)} \cong (Q_{hz})_{J_z} \to R_z $ for each $ z \in Z $, but this is easy.
	For each $ \mu \in J_z $, we have a unique factorization of $ h^{\flat}_z \colon Q_{hz} \to R_z $ through $ \mu $,
	and these factorizing morphisms yield a morphism $ (Q_{hz})_{J_z} \to R_z $ as desired.
	It is routine work to verify admissibility of $ \bar{h}^{\flat}_z $, and we leave it to the reader.
	
	We now have obtained $ \bar{h} $ in $ \modsp{\mathbb{A}}{\mathbf{Sp}} $ such that the diagram below in $ \modsp{T_0}{\mathbf{Sp}} $ is commutative
	\[ \tikz[auto]{
		\node (UL) at (0,2) {$ (Z,R) $};
		\node (UC) at (3,2) {$ (\Spec(Y,Q),S) $};
		\node (UR) at (6,2) {$ (Y,Q) $};
		\node (DC) at (3,0) {$ (X,P) $};
		\draw[dashed,->] (UL) to node {$ \bar{h} $} (UC);
		\draw[->] (UC) to node {$ fg $} (DC);
		\draw[->] (UC) to node {$ g $} (UR);
		\draw[->] (UL) to node[swap] {$ k $} (DC);
		\draw[->] (UR) to node {$ f $} (DC);
		\draw[bend left=30pt,->] (UL) to node {$ h $} (UR);
	}. \]
	It is straightforward to see that such an $ \bar{h} $ is uniquely determined.
	Hence, we have established the desired bijection.
\end{proof}
For each morphism $ h \colon (Y,Q) \to (Y',Q') $ of $ T_0 $-modelled spaces, by applying the previous proposition to $ hg \colon \Spec(Y,Q) \to (Y',Q') $,
we can obtain a morphism $ \Spec(Y,Q) \to \Spec(Y',Q') $ in $ \modsp{\mathbb{A}}{\mathbf{Sp}}/(X,P) $.
Thus, the $ \Spec $-construction can be extended to a functor $ \Spec \colon \modsp{T_0}{\mathbf{Sp}}/(X,P) \to \modsp{\mathbb{A}}{\mathbf{Sp}}/(X,P) $.
The previous proposition also shows that $ \Spec $ is a right adjoint to the forgetful functor.
We have finished proving \emph{Theorem \ref{thm:spectra-of-modelled-spaces}}.

Closing this section, we fulfill the promise we made at the end of \S\ref{subsec:introducing-modelled-spaces}.
If we eliminate all the above mentions to $ (X,P) $ and $ f $, the same proofs show that there exists an adjunction
\[ \tikz{
	\node (L) at (0,0) {$ \modsp{\mathbb{A}}{\mathbf{Sp}} $};
	\node (R) at (4,0) {$ \modsp{T_0}{\mathbf{Sp}} $};
	\draw[bend left=15pt,->] (L) to node[above] {forgetful} node[midway,name=U] {} (R);
	\draw[bend left=15pt,->] (R) to node[below] {$ \Spec $} node[midway,name=D] {} (L);
	\path (D) to node[sloped] {$ \vdash $} (U);
},  \]
where $ \Spec(Y,Q) = \Set{(y,I) \smid y \in Y,\, I \in \Spec(Q_y)} $.
On the other hand, the global section functor $ \Gamma \colon \modsp{T_0}{\mathbf{Sp}} \to T_0\mhyphen\mathbf{Mod}^{\op} $ has the right adjoint
sending a $ T_0 $-model $ A $ to the $ T_0 $-modelled space $ (\{*\},A) $.
By \emph{Definition \ref{def:local-sections-of-str-sheaf}},
the composite of these right adjoints $ T_0\mhyphen\mathbf{Mod}^{\op} \to \modsp{T_0}{\mathbf{Sp}} \xrightarrow{\Spec} \modsp{\mathbb{A}}{\mathbf{Sp}} $
is naturally isomorphic to the functor $ \Spec $ we considered in \S\ref{subsec:spectra-of-models}.
Composing these adjunctions, we obtain the desired adjunction
\[ \tikz{
	\node (L) at (0,0) {$ \modsp{\mathbb{A}}{\mathbf{Sp}} $};
	\node (R) at (4,0) {$ T_0\mhyphen\mathbf{Mod}^{\op} $};
	\draw[bend left=15pt,->] (L) to node[above] {$ \Gamma $} node[midway,name=U] {} (R);
	\draw[bend left=15pt,->] (R) to node[below] {$ \Spec $} node[midway,name=D] {} (L);
	\path (D) to node[sloped] {$ \vdash $} (U);
}.  \]

\section{Limits of Modelled Spaces} \label{sec:limits}
We will construct limits in $ \modsp{\mathbb{A}}{\mathbf{Sp}} $ from limits in $ \modsp{T_0}{\mathbf{Sp}} $ by using relative spectra.
We will basically follow the arguments of \cite[Theorem 2.1]{BrandLimits}.
Blechschmidt \cite[\S12.7]{BlechThesis} follows Gillam's argument \cite{Gillam2011}
and also gives a counter-example showing that $ \mathbf{LRS} $ is not closed under pullbacks in $ \mathbf{RS} $.

We first describe limits in $ \modsp{T_0}{\mathbf{Sp}} $.
As we remarked at the beginning of \S\ref{sec:colimits}, limits in $ \modsp{T_0}{\mathbf{Sp}} $ should be preserved 
by the forgetful functor $ \modsp{T_0}{\mathbf{Sp}} \to \mathbf{Sp} $.
One can obtain those limits by a general construction (see \emph{Remark \ref{rmk:limits-of-T0-modelled-spaces}} below), but we first take a closer look at a few special cases.
The terminal object is the single-point space $ \{ * \} $ equipped with the initial $ T_0 $-model as a structure sheaf.

For (infinitary) products, let $ \{ (X_i,P_i) \}_{i \in I} $ be a family of $ T_0 $-modelled spaces.
The product $ (Z,R) $ has the underlying space $ Z = \prod_i X_i $.
Given open sets $ U_i \subseteq X_i $ so that $ U_i = X_i $ for all but finite $ i $,
we will regard $ \prod_i U_i $ as a basic open subset of $ Z $.
We define the sheaf $ R $ of $ T_0 $-models on $ Z $ as follows: for any open $ W \subseteq Z $,
\[ RW := \Set{s \in \prod_{w \in W}\coprod_i (P_i)_{w_i} \smid 
	\begin{aligned}
		\forall z \in W,\, \exists \prod_i U_i \;\text{with}\; z \in \prod_i U_i \subseteq W, \\
		\exists t \in \coprod_i P_i U_i ,\, \forall w \in \prod_i U_i ,\, s_w = t_w
	\end{aligned}
}, \]
where the map $ \coprod_i P_i U_i \ni t \mapsto t_w \in \coprod_i (P_i)_{w_i} $ is the homomorphism making the following diagram commutative for each $ i $
\[ \tikz[auto]{
	\node (UL) at (0,1.5) {$ P_i U_i $};
	\node (DL) at (0,0) {$ \coprod_i P_i U_i $};
	\node (UR) at (3,1.5) {$ (P_i)_{w_i} $};
	\node (DR) at (3,0) {$ \coprod_i (P_i)_{w_i} $};
	
	\draw[->] (UL) to (DL);
	\draw[->] (UR) to (DR);
	\draw[dashed,->] (DL) to (DR);
	\draw[->] (UL) to (UR);
}. \]

Given a point $ z \in Z $ and an open $ W \ni z $, we have a homomorphism $ RW \ni s \mapsto s_z \in \coprod_i (P_i)_{z_i} $.
Thus, there exists a canonical homomorphism $ R_z \to \coprod_i (P_i)_{z_i} $.

\begin{Claim}
	The above homomorphism is an isomorphism: $ R_z \cong \coprod_i (P_i)_{z_i} $.
\end{Claim}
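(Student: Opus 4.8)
The plan is to mimic the proof of \emph{Lemma \ref{lem:stalk-of-structure-sheaf}}: I would show that the canonical homomorphism $R_z \to \coprod_i (P_i)_{z_i}$, sending the germ of $s$ at $z$ to $s_z$, is a surjective embedding. Throughout I would use three standard facts about the locally finitely presentable category $T_0\mhyphen\mathbf{Mod}$: that an arbitrary coproduct is the filtered colimit of its finite sub-coproducts; that a finite coproduct commutes with filtered colimits (colimits commute with colimits, and a finite product of filtered categories is filtered); and that the interpretation of an atomic formula is preserved and reflected along a filtered colimit at some finite stage. I would also repeatedly use that homomorphisms preserve positive atomic formulas, together with the canonical homomorphism $\coprod_i P_i U_i \to R(\prod_i U_i)$ sending $t$ to the section $w \mapsto t_w$ (the analogue of the map $A_\lambda \to S(D_\lambda)$ from \S\ref{subsec:spectra-of-models}).

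For surjectivity, I would take $\xi \in \coprod_i (P_i)_{z_i}$ and regard it as a morphism out of the free model on one generator (which is finitely presentable). It then factors through $\coprod_{i \in F}(P_i)_{z_i}$ for some finite $F \subseteq I$, and, since finite coproducts commute with the filtered colimits defining stalks, through $\coprod_{i\in F} P_i U_i$ for suitable neighbourhoods $U_i \ni z_i$. Setting $U_j := X_j$ for $j \notin F$ produces a basic open $N = \prod_i U_i \ni z$ and an element $t \in \coprod_i P_i U_i$ whose germ at $z$ is $\xi$; the section $w \mapsto t_w$ then lies in $RN$ and maps to $\xi$.

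For the embedding, suppose $\coprod_i (P_i)_{z_i} \models \rho(s_z, s'_z)$ for sections $s,s' \in RW$ and a relation symbol $\rho$ (the case of $=$ giving injectivity). Using the definition of $RW$ and intersecting the two witnessing basic neighbourhoods, I would reduce to the case where $s,s'$ agree on a single basic open $N = \prod_i U_i$ with elements $t,t' \in \coprod_i P_i U_i$, so that $\coprod_i (P_i)_{z_i} \models \rho(t_z, t'_z)$. Then I would descend: $t,t'$ factor through a finite $\coprod_{i\in F_0} P_i U_i$; enlarging to a finite $F_1 \supseteq F_0$ containing every index where $U_i \neq X_i$, reflection of $\rho$ along the filtered colimit $\coprod_{i\in I}(P_i)_{z_i} = \rlim_F \coprod_{i\in F}(P_i)_{z_i}$ and then along $\coprod_{i\in F_1}(P_i)_{z_i} = \rlim \coprod_{i\in F_1} P_i V_i$ yields neighbourhoods $V_i \subseteq U_i$ with $\coprod_{i\in F_1} P_i V_i \models \rho(t|, t'|)$. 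Putting $V_j := X_j$ for $j \notin F_1$ gives a basic open $N' = \prod_i V_i \subseteq N$ with $z \in N'$, and applying the coprojection $\coprod_{i\in F_1} P_i V_i \to \coprod_{i\in I} P_i V_i$ followed by $\coprod_i P_i V_i \to R(N')$ shows $R(N') \models \rho(s|_{N'}, s'|_{N'})$; hence the relation already holds for the germs in $R_z$.

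The main obstacle I anticipate is organizational: keeping track of the interaction between the infinite coproduct and the stalk colimits, and between arbitrary open sets and the basic opens $\prod_i U_i$. The reflection step is where infiniteness bites, since reflecting an atomic formula ``all at once'' across $\coprod_{i\in I}$ is not directly available; the remedy is the two-stage passage to a finite index set $F_1$ large enough to contain both the support of $t,t'$ and the finitely many coordinates on which the chosen neighbourhoods are proper, after which everything happens in finite coproducts where the interchange of colimits is unproblematic. Once the relation is reflected to such a finite stage, preservation of atomic formulas by the coprojection and by the canonical map into $R(N')$ finishes the argument exactly as in \emph{Lemma \ref{lem:stalk-of-structure-sheaf}}.
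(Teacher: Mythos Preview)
Your proposal is correct and follows the same overall strategy as the paper: show that the canonical map $R_z \to \coprod_i (P_i)_{z_i}$ is a surjective embedding. The difference lies in the execution. The paper opens with a single observation that does all the work at once: the family $\{(P_i)_{z_i}\}_i$ is the filtered colimit, in $T_0\mhyphen\mathbf{Mod}^I$, of the families $\{P_i U_i\}_i$ indexed by basic opens $\prod_i U_i \ni z$; since the coproduct functor $\coprod_I \colon T_0\mhyphen\mathbf{Mod}^I \to T_0\mhyphen\mathbf{Mod}$ preserves colimits, this yields directly
\[
\coprod_i (P_i)_{z_i} \;\cong\; \rlim_{\prod_i U_i \ni z} \coprod_i P_i U_i.
\]
With this in hand, both surjectivity and the reflection of an atomic relation reduce to a single step along \emph{one} filtered colimit, exactly as in \emph{Lemma~\ref{lem:stalk-of-structure-sheaf}}: no passage through finite sub-coproducts, no bookkeeping of a support set $F_1$. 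Your two-stage decomposition (first through $\rlim_F \coprod_F$, then through $\rlim_V \coprod_{F_1} P_i V_i$) is effectively a by-hand reconstruction of this same isomorphism.

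One small organizational point in your embedding argument: reflection along $\coprod_I (P_i)_{z_i} = \rlim_F \coprod_F (P_i)_{z_i}$ may force you to a finite $F'$ strictly larger than the $F_1$ you fixed in advance, since coprojections of coproducts need not be embeddings. The correct order is: first reflect to obtain some finite $F' \supseteq F_0$ where $\rho$ holds, then enlarge to $F_1 := F' \cup \{i : U_i \neq X_i\}$ (preservation of atomic formulas along the coprojection $\coprod_{F'} \to \coprod_{F_1}$ keeps $\rho$), and only then carry out the second reflection to find the $V_i$'s. This is a reordering, not a gap, but it is precisely the kind of bookkeeping the paper's one-line colimit identification lets you avoid.
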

\begin{proof}
	Note that the coproduct $ \coprod_i (P_i)_{z_i} $ is isomorphic to the filtered colimit 
	\[ \rlim_{\prod_i U_i \ni z} \coprod_i P_i U_i. \]
	This is because the family $ \{ (P_i)_{z_i}\}_i $ is a filtered colimit of the diagram
	\[ \Set{\text{basic open sets containing $ z $}}^{\op} \ni \prod_i U_i \quad \mapsto \quad \{ P_i U_i \}_i \in T_0\mhyphen\mathbf{Mod}^I. \]
	
	\vspace{5pt}
	\noindent\emph{Surjectivity}:
	Any element of $ \coprod_i (P_i)_{z_i} $ is in the image of some homomorphism $ \coprod_i P_i U_i \to \coprod_i (P_i)_{z_i} $.
	This homomorphism equals to the composite of $ \coprod_i P_i U_i \to R(\prod_i U_i) \to \coprod_i (P_i)_{z_i} $.
	Thus, $ R_z \to \coprod_i (P_i)_{z_i} $ is surjective.
	
	\vspace{5pt}
	\noindent\emph{Embedding}:
	Let $ \rho $ be a binary relation symbol.
	Suppose that an open $ W $ contains $ z $, and $ s,s' \in RW $ satisfy $ \coprod_i (P_i)_{z_i} \models \rho(s_z,s'_z)  $.
	By the definition of $ RW $, we can take  a basic open $ \prod_i U_i \subseteq W $ containing $ z $ and $ t,t' \in \coprod_i P_i U_i $
	satisfying $ s_w = t_w $ and $ s'_w= t'_w $ for each $ w \in \prod_i U_i $.
	Since $ \coprod_i (P_i)_{z_i} \models \rho(t_z, t'_z) $ and $ \coprod_i (P_i)_{z_i} $ is isomorphic to $ \rlim_{\prod_i V_i \ni z} \coprod_i P_i V_i $,
	we can replace $ \prod_i U_i \ni z $ by a smaller $ \prod_i V_i $ so that $ \coprod_i P_i V_i \models \rho(t,t')$.
	Hence, $ R(\prod_i V_i) \models \rho(s,s') $, and this shows that $ R_z \to \coprod_i (P_i)_{z_i} $ is an embedding.
\end{proof}

Let $ p_i \colon Z \to X_i $ be the projection map.
For any open $ U \subseteq X_i $, we define a homomorphism $ P_i U \to R(p^{-1}_i U) $ as follows:
put $ U_i = U $ and $ U_j = X_j $ for $ j \neq i $ so that $ \prod_j U_j = p^{-1}_i U $.
The family $ \{ P_i U \to \coprod_i P_i U_i \to \coprod_i (P_i)_{w_i} \}_{w \in \prod_j U_j} $ yields
a homomorphism $ P_i U \to \prod_{w \in \prod_j U_j} \coprod_i (P_i)_{w_i} $ whose image is contained in $ R(p^{-1}_i U) $.
Thus, we can equip $ p_i $ with a morphism $ p^{\sharp}_i \colon P_i \to (p_i)_*R $,
and then $ (p^{\flat}_i)_z \colon (P_i)_{z_i} \to R_z $ is the coprojection.
Once we know $ R_z \cong \coprod_i (P_i)_{z_i} $,
the universality of these $ p_i \colon (Z,R) \to (X_i,P_i) $ as a product follows immediately.

The construction of equalizers in $ \modsp{T_0}{\mathbf{Sp}} $ is similar to the above,
but we need to take into consideration a condition for basic open sets (see the remark below).
We leave the details to the reader.
We thus have shown the following:
\begin{Prop*} \label{prop:T0-ModSp-is-complete}
	$ \modsp{T_0}{\mathbf{Sp}} $ has all small limits.
\end{Prop*}

\begin{Rmk} \label{rmk:limits-of-T0-modelled-spaces}
	An analogous construction of the structure sheaf works for finitary pullbacks, but seemingly not for arbitrary limits.
	Suppose we have a diagram $ \mathcal{I} \ni i \mapsto (X_i,P_i) \in \modsp{T_0}{\mathbf{Sp}} $.
	Put $ Z := \llim_{i} X_i $ with projections $ p_i \colon Z \to X_i $.
	If $ \prod_i U_i \subseteq \prod_i X_i $ is a basic open set,
	we write $ \llim_i U_i $ for the intersection $ \prod_i U_i \cap \llim_i X_i \subseteq \prod_i X_i $.
	To define a sheaf $ R $ on $ Z $ as above, we have to restrict ourselves to consider $ \llim_i U_i $ satisfying $ \alpha_{ij}U_i \subseteq U_j $ 
	for each map $ \alpha_{ij} \colon X_i \to X_j $ in the diagram,
	for we have to take a colimit $ \rlim_i P_i U_i $ over the diagram whose transition map is $ P_j U_j \to P_i (\alpha^{-1}_{ij}U_j) \to P_i U_i $.
	However, this restriction violates the argument in the proof of the above claim.
	For the diagram $ i \mapsto (P_i)_{z_i} $ to be a filtered colimit of 
	\[ \Set{\text{``good'' basic open sets containing $ z $}}^{\op} \ni \llim_i U_i \quad \mapsto \quad (i \mapsto P_i U_i) \]
	in the functor category $ T_0\mhyphen\mathbf{Mod}^{\mathcal{I}^{\op}} $,
	there should exist, for each $ i $ and an open $ U \ni z_i $, such a good basic open $ \llim_i U_i \ni z $ with $ U_i \subseteq U $.
	This is not necessarily the case for infinitary pullbacks since there can exist continuous maps $ \{f_i \colon X_i \to Y \}_i $ and an open $ U \subseteq Y $
	such that $ f_i^{-1}U \subsetneq X_i $ for all $ i $.
	
	One can also obtain a limit in $ \modsp{T_0}{\mathbf{Sp}} $ by taking a colimit $ R := \rlim_i p^*_i P_i $ in $ T_0\mhyphen\mathbf{Mod}(\mathbf{Sh}(Z)) $.
	Here, cocompleteness of $ T_0\mhyphen\mathbf{Mod}(\mathbf{Sh}(Z)) $ can be seen via the associated sheaf functor
	\[ \mathbf{a} \colon T_0\mhyphen\mathbf{Mod}^{\mathcal{O}(Z)^{\op}} \simeq T_0\mhyphen\mathbf{Mod}(\mathbf{Set}^{\mathcal{O}(Z)^{\op}}) 
	\to T_0\mhyphen\mathbf{Mod}(\mathbf{Sh}(Z)). \]
	We remark that, from this description of $ R $, one can always have $ R_z \cong \rlim_i (P_i)_{z_i} $ since the functor
	$ \mathrm{Stalk}_z \colon T_0\mhyphen\mathbf{Mod}(\mathbf{Sh}(Z)) \to T_0\mhyphen\mathbf{Mod} $ preserves colimits.
	By the usual computation of colimits of sheaves, $ R $ is the associated sheaf of the presheaf
	\[ \mathcal{O}(Z)^{\op} \ni W \quad \mapsto \quad \rlim_{i \in \mathcal{I}^{\op}} \;\rlim_{p_i W \subseteq V_i} P_i V_i. \]
	Thus, to show these two descriptions coincide, 
	what remains unclear is whether local definitions of a section $ s \in \prod_{w \in W} \rlim_i (P_i)_{w_i} $ by elements of $ \rlim_i \;\rlim_{V_i} P_i V_i $ 
	can be replaced by elements of $ \rlim_i P_i U_i $ for $ \llim_i U_i \subseteq W $.
\end{Rmk}

Now, we prove our final theorem.
\begin{Thm}
	For any spatial Coste context $ (T_0,T,\Lambda) $, $ \modsp{\mathbb{A}}{\mathbf{Sp}} $ has all small limits.
\end{Thm}
\begin{proof}
	Let $ D \colon \mathcal{I} \to \modsp{\mathbb{A}}{\mathbf{Sp}} $ be a functor with $ \mathcal{I} $ small,
	and $ UD \colon \mathcal{I} \to \modsp{T_0}{\mathbf{Sp}}  $ be the composite of $ D $ and the forgetful functor.
	Then we have a forgetful functor $ \mathbf{Cone}(D) \to \mathbf{Cone}(UD) $ between the categories of cones over $ D $ and $ UD $.
	Examining the proof of \emph{Theorem \ref{thm:spectra-of-modelled-spaces}},
	we can also construct a right adjoint to this functor,
	i.e.\ the functor sending a cone $ \{ f_i \colon (Y,Q) \to (X_i,P_i) \}_i $ over $ UD $ to the cone over $ D $ whose vertex is
	\[ \Spec(Y,Q) = \Set{(y,I) \smid y \in Y,\, I \in \Spec(Q_y) \;\text{such that}\; \forall i,\, \widetilde{(f_i^{\flat})_y}(I)=\{\id_{(P_i)_{f_i y}}\}}. \]
	By \emph{Proposition \ref{prop:T0-ModSp-is-complete}}, $ \mathbf{Cone}(UD) $ has a terminal object,
	and the right adjoint $ \mathbf{Cone}(UD) \to \mathbf{Cone}(D) $ preserves it.
	Therefore, we have proved that $ D $ has a limit.
\end{proof}
Recalling the examples of Coste contexts in \S\ref{subsec:Coste-context}, we obtain the following consequences:
\begin{Cor*}
	The following categories are complete and cocomplete:
	\begin{itemize}
		\item $ \mathbf{RS} $, $ \mathbf{LRS} $,
		\item the category of ringed spaces with integral stalks and morphisms whose components are injective,
		\item the category of ringed spaces whose stalks are fields,
		\item the category of ringed spaces with indecomposable stalks and morphisms whose components are injective on idempotents,
		\item the category of distributive-latticed spaces with local stalks and morphisms whose components reflect $ 1 $. \qedhere
	\end{itemize}
\end{Cor*}

We can also show a result on pullbacks of spectra:
\begin{Cor}
	Let $ g \colon (Z,R) \to (X,P) $ be a morphism in $ \modsp{\mathbb{A}}{\mathbf{Sp}} $,
	and $ f \colon (Y,Q) \to (X,P) $ be a morphism in $ \modsp{T_0}{\mathbf{Sp}} $.
	If we take a pullback in $ \modsp{T_0}{\mathbf{Sp}} $
	\[ \tikz[auto]{
		\node (UL) at (0,2) {$ (W,S) $};
		\node (DL) at (0,0) {$ (Z,R) $};
		\node (UR) at (2,2) {$ (Y,Q) $};
		\node (DR) at (2,0) {$ (X,P) $};
		
		\draw[->] (UL) to node {$ h $} (DL);
		\draw[->] (UR) to node {$ f $} (DR);
		\draw[->] (DL) to node {$ g $}(DR);
		\draw[->] (UL) to (UR);
	}, \]
	then there exists a pullback in $ \modsp{\mathbb{A}}{\mathbf{Sp}} $
	\[ \tikz[auto]{
		\node (UL) at (0,2) {$ \Spec(h) $};
		\node (DL) at (0,0) {$ (Z,R) $};
		\node (UR) at (2,2) {$ \Spec(f) $};
		\node (DR) at (2,0) {$ (X,P) $};
		
		\draw[->] (UL) to (DL);
		\draw[->] (UR) to (DR);
		\draw[->] (DL) to node {$ g $}(DR);
		\draw[->] (UL) to (UR);
	}. \]
\end{Cor}
\begin{proof}
	Since $ \modsp{T_0}{\mathbf{Sp}} $ and $ \modsp{\mathbb{A}}{\mathbf{Sp}} $ has pullbacks,
	the vertical functors below between slice categories have right adjoints:
	\[ \tikz[auto]{
		\node (UL) at (0,2) {$ \modsp{\mathbb{A}}{\mathbf{Sp}}/(Z,R) $};
		\node (DL) at (0,0) {$ \modsp{\mathbb{A}}{\mathbf{Sp}}/(X,P) $};
		\node (UR) at (6,2) {$ \modsp{T_0}{\mathbf{Sp}}/(Z,R) $};
		\node (DR) at (6,0) {$ \modsp{T_0}{\mathbf{Sp}}/(X,P) $};
		
		\draw[->] (UL) to node {$ g \circ (-) $} (DL);
		\draw[->] (UR) to node {$ g \circ (-) $} (DR);
		\draw[->] (DL) to node {}(DR);
		\draw[->] (UL) to (UR);
	}. \]
	This square is commutative.
	By taking right adjoints, the following diagram commutes up to natural isomorphisms
	\[ \tikz[auto]{
		\node (UL) at (0,2) {$ \modsp{\mathbb{A}}{\mathbf{Sp}}/(Z,R) $};
		\node (DL) at (0,0) {$ \modsp{\mathbb{A}}{\mathbf{Sp}}/(X,P) $};
		\node (UR) at (6,2) {$ \modsp{T_0}{\mathbf{Sp}}/(Z,R) $};
		\node (DR) at (6,0) {$ \modsp{T_0}{\mathbf{Sp}}/(X,P) $};
		
		\draw[<-] (UL) to node {$ g^* $} (DL);
		\draw[<-] (UR) to node {$ g^* $} (DR);
		\draw[<-] (DL) to node {$ \Spec $} (DR);
		\draw[<-] (UL) to node {$ \Spec $} (UR);
	}. \]
\end{proof}

\paragraph{Acknowledgment}
The author would like to thank Soichiro Fujii and Yuki Imamura for helpful comments on a draft of this paper.
This work was supported by JST \mbox{ERATO} \mbox{HASUO} Metamathematics for Systems Design Project (No.\ JPMJER1603).

\printbibliography

\end{document}